\documentclass[12pt,leqno]{amsart}
\usepackage{amsmath,stmaryrd}
\usepackage{amssymb}
\usepackage{amsthm}
\usepackage{epsf}
\usepackage{graphicx}
\usepackage{esint} 
\usepackage{dsfont}
\usepackage[pagebackref]{hyperref} 
\hypersetup{pdfpagemode=FullScreen, colorlinks=true, citecolor={blue}} 
\usepackage{enumerate} 

\graphicspath{ {./pictures/} }

\numberwithin{equation}{section}

\newcommand{\1}{{\mathds 1}}

\newcommand{\ms}{\medskip}

\newcommand{\R}{\mathbb R}
\newcommand{\N}{\mathbb N}

\newcommand{\bp}{\noindent {\em Proof: }}
\newcommand{\ep}{\hfill $\square$ \medskip}

\newcommand{\A}{\mathcal A}

\newcommand{\C}{\mathcal C}

\theoremstyle{plain}
\newtheorem{theorem}{Theorem}[section]
\newtheorem{lemma}[theorem]{Lemma}
\newtheorem{corollary}[theorem]{Corollary}
\newtheorem{proposition}[theorem]{Proposition}

\newtheorem{definition}[theorem]{Definition}

\theoremstyle{definition}

\theoremstyle{remark}
\newtheorem{remark}[theorem]{Remark}

\begin{document}

\title[Reverse super-Riesz transforms]{Reverse inequalities for super-Riesz transforms on graphs with a slow diffusion}

\author[Feneuil]{Joseph Feneuil}
\address{Joseph Feneuil. Universit\'e Paris-Saclay, Laboratoire de math\'ematiques d’Orsay, 91405, Orsay, France}
\email{joseph.feneuil@universite-paris-saclay.fr}

\begin{abstract} 
In the $D$-dimensional Vicsek graph, we prove that the Riesz-like inequality
\(
\|\nabla f\|_p \leq C \|\Delta^\gamma f\|_p
\)
holds for every $p\in(1,\infty)$ and every
\(
0<\gamma<\gamma^*(p):=\frac{1}{D+1}+\frac{D-1}{D+1}\,\frac{1}{p},
\)
while it fails whenever $p\in(1,\infty)$ and $\gamma^*(p)<\gamma<1$. Thus, the validity of the inequality remains open only at the critical exponent $\gamma=\gamma^*(p)$. This provides, for a non-compact space, an example of  an $L^p$-bounded ``super-Riesz transform'', namely an operator of the form $\nabla \Delta^{-\gamma}$ with $\gamma$ strictly larger than the Euclidean threshold $\frac12$.

To achieve this, we establish a more general result linking the diffusion escape rate and a Poincar\'e inequality on balls to the validity of the reverse Riesz-like inequality
\(
\|\Delta^\gamma f\|_p \leq C \|\nabla f\|_p.
\)

%In the $D$-dimensional Vicsek graph, we establish that the Riesz-like inequality $\||\nabla f|\|_p \leq C \|\Delta^\gamma f \|_p$ holds whenever $p\in (1,\infty)$ and $0 < \gamma < \gamma^*(p):= \frac1{D+1} + \frac1p (\frac{D-1}{D+1})$, and fails whenever $p\in (1,\infty)$ and $ \gamma^*(p) < \gamma < 1$, leaving only the validity of the inequality at critical value $\gamma = \gamma^*(p)$ unknown. We thus gave the first example of $L^p$-boundedness of a ``super Riesz transform'', that is an operator of the form  $\nabla \Delta^{-\gamma}$ where $\gamma$ is greater than the Euclidean value $\frac12$. 
%
%To that objective, we prove a more general result that relates the diffusion escape rate and a Poincar\'e nequality of balls, to the validity of the reverse Riesz-like inequality $ \|\Delta^\gamma f\|_p \leq C  \||\nabla f|\|_p$.
\end{abstract}

\maketitle

\ms\noindent{\bf Keywords:}  Vicsek graphs, graphs, slow diffusion, Poincar\'e inequality, Riesz transform, reverse Riesz transform.

\ms\noindent
MSC2020 classification: 60J60, 42B20, 43A85, 60J10,

\tableofcontents

\section{Introduction and statement of the results}

\subsection{Introduction}
We consider a metric measure space \( (X,d,m) \) endowed with a non-negative self-adjoint operator \( \Delta \) acting on \(L^2(X,m)\). Associated with the corresponding Dirichlet form
\[
\mathcal{E}(f,g) = -\int_X f\,\Delta g\,dm,
\]
one can introduce a notion of gradient operator \(\nabla\), which behaves in \(L^2(X)\) like \(\Delta^{1/2}\). A natural and longstanding question is how these two “differential operators” \(\nabla\) and \(\Delta^{1/2}\) compare in \(L^p\) spaces.

In the Euclidean setting, the Riesz transform \(\mathcal R = \nabla \Delta^{-1/2}\), which can be defined via the Fourier transform or as the singular integral operator
\[
\mathcal{R}f(x)=\mathrm{p.v.}\int_{\mathbb{R}^n}\frac{x-y}{|x-y|^{n+1}}f(y)\,dy,
\]
is bounded on \(L^p(\mathbb{R}^n)\) for all \(p\in (1,\infty)\). One standard proof relies on Calderón–Zygmund theory, where the Riesz transform serves as a model example; details can be found in \cite[Chapter 2]{Ste70a}.

The \(L^p\)-boundedness of the Riesz transform yields the estimate
\begin{equation}\label{Rpbis}\tag{R$_{p}$}
\|\nabla f\|_{L^p} \leq C\,\|\Delta^{1/2}f\|_{L^p}, \qquad \text{for } f\in L^2 \cap L^p.
\end{equation}
From this, one may also consider the reverse estimate
\begin{equation}\label{RRpbis}\tag{RR$_{p}$}
\|\Delta^{1/2}f\|_{L^p} \leq C\,\|\nabla f\|_{L^{p}}.
\end{equation}
Moreover, a simple duality argument shows that \eqref{Rpbis} implies \((\mathrm{RR}_{p'})\), where \(p'\) is the Hölder conjugate of \(p\in (1,\infty)\); see Lemma~\ref{lemDuality} for the precise statement in our setting. As such, in Euclidean spaces, we have the equivalence $\|\Delta^{1/2} f \|_{L^p} \approx \||\nabla f|\|_{L^p}$ for all $p\in (1,\infty)$.

\medskip

In 1983, Strichartz asked in \cite{Str83} for which Riemannian manifolds and which values of \(p\in (1,\infty)\) the inequalities \eqref{Rpbis} and \eqref{RRpbis} hold. This initiated an extensive line of research aimed at identifying necessary and sufficient conditions for these inequalities.

We briefly highlight a few representative results:
\begin{itemize}
\item Under doubling volume growth and Gaussian heat kernel estimates, \eqref{Rpbis} holds for all \(1<p\leq 2\) on manifolds \cite{CD99} and on graphs \cite{Russ00}.
\item Even under the same assumptions as in \cite{CD99}, there exist Riemannian manifolds for which \eqref{Rpbis} fails for all \(p>2\); see \cite[Section 5]{CD99}.
\item If one additionally assumes an \(L^2\)-Poincaré inequality on balls, then \eqref{Rpbis} holds for a small range \(p\in (2,2+\varepsilon)\), see \cite{AC05}.
\item Assuming doubling volume growth and sub-Gaussian heat kernel estimates, \eqref{Rpbis} holds for all \(1<p\leq 2\) on manifolds and graphs \cite{CCFR15}.
\end{itemize}

Further results in the context of Riemannian manifolds or graphs can be found in \cite{CD03,ACDH04,BR09,BF15,DR22,Ouh24}, as well as results using Hardy spaces in the endpoint case \(p=1\)  see \cite{DY,AMR08,HLMMY11,BD14,Fen15}.

\medskip

\begin{figure}[!ht]
\centering
\caption{Fragment of the Vicsek graph of dimension $\log_3 5$.}
\label{fig1}
\includegraphics[width=0.33\textwidth]{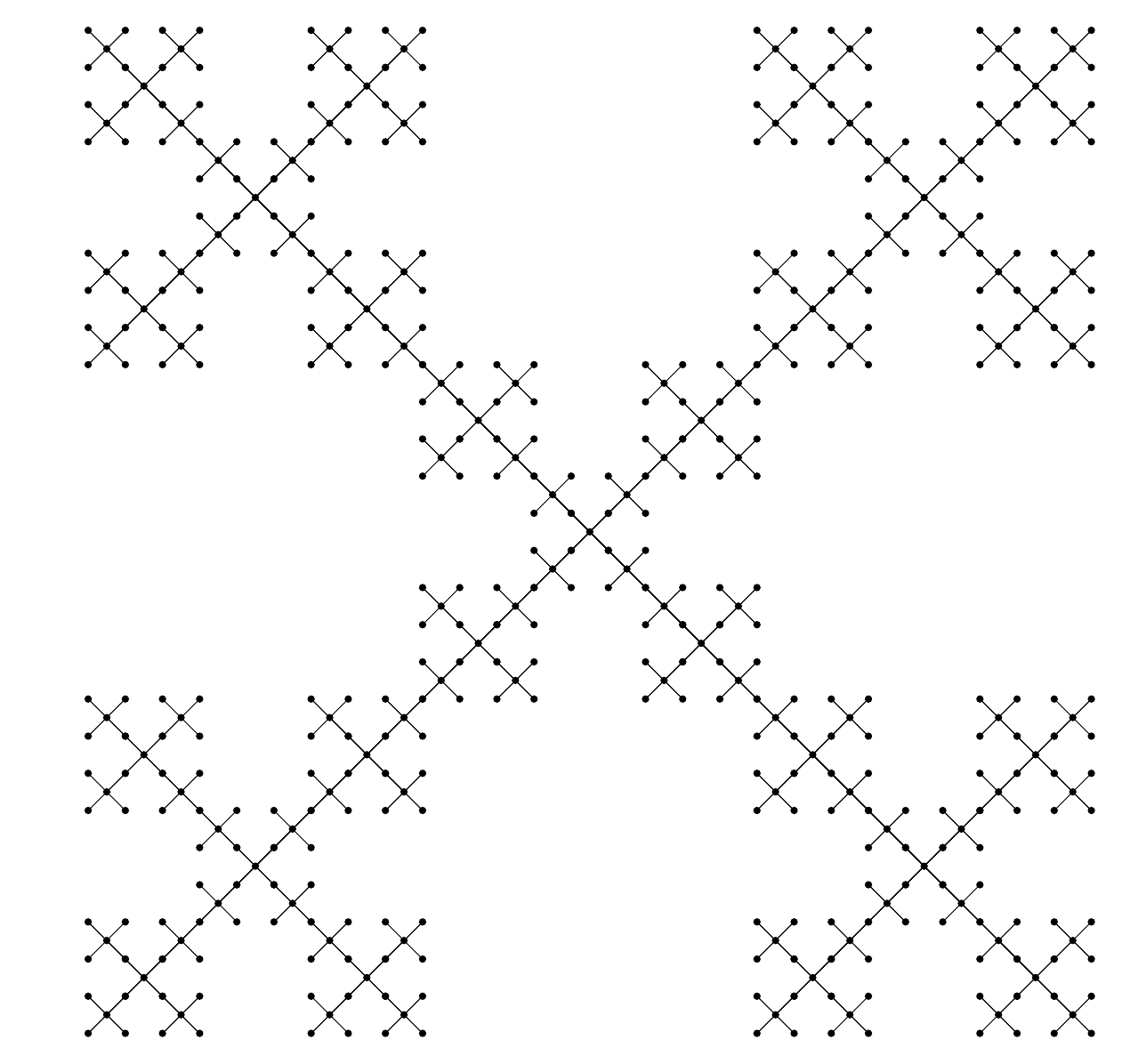}
\end{figure}

The behavior of the Riesz transform in fractal-type structures appears to be strikingly different from the Euclidean case. In \cite{CCFR15}, we established a complete picture of the validity of \eqref{Rpbis} and \eqref{RRpbis} in the case of Vicsek graphs (see Figure~\ref{fig1} for an illustration, and Subsection~\ref{SsVicsek} for the definition). Namely, \eqref{Rpbis} holds if and only if \(p\in (1,2]\), while \eqref{RRpbis} holds if and only if \(p\in [2,\infty)\). Vicsek graphs are particularly well-suited to analysis: their fractal-like structure leads to behavior very different from the Euclidean setting, while their additional structure (in particular their tree-like nature) makes them more tractable than most fractal spaces; see for instance \cite{BC23,BC24}.

This naturally led Devyver and Russ to ask whether, in the case of Vicsek graphs, it is meaningful to compare \(\|\nabla f\|_p\) directly with \(\|\Delta^{1/2} f\|_p\), or whether instead one should compare \(\|\nabla f\|_p\) with \(\|\Delta^{\gamma^*(p)} f\|_p\), for some exponent \(\gamma^*(p)\) to be determined.

\medskip

We therefore introduce a parameter into \eqref{Rpbis} and \eqref{RRpbis}, and study for which \(\gamma \in [0,1]\) and \(p\in (1,\infty)\) the following inequalities hold on Vicsek graphs, or more generally on discrete fractal-type structures:
\begin{equation}\label{Rpggbis}\tag{R$_{p,\gamma}$}
\|\nabla f\|_{L^p} \leq C \|\Delta^\gamma f\|_{L^p}
\quad \text{for any compactly supported function } f,
\end{equation}
and
\begin{equation}\label{RRpggbis}\tag{RR$_{p,\gamma}$}
\|\Delta^\gamma f\|_{L^p} \leq C \|\nabla f\|_{L^p}
\quad \text{for any compactly supported function } f.
\end{equation}

\begin{remark} \label{rmkfinite}
Note that the two inequalities automatically holds when the graph is finite. Indeed, for a graph with $n$ vertices, functions on the graph are vectors in $\R^n$, while the Laplacian is a non-negative symmetric matrix. Therefore, on {\bf finite graphs},  a simple application of the spectral theorem for matrices gives that
\[ \|\Delta^{\gamma_1} f \|_{L^2} \leq  C \|\Delta^{\gamma_2} f \|_{L^2} \quad \text{ whenever } 0 < \gamma_1,\gamma_2.\]
By definition of $\nabla$ and $\Delta$, we also have $\||\nabla f|\|_{L^2} = \|\Delta^{\frac12}\|_{L^2}$. Together with the fact that norms in $\R^n$ are all equivalent, we obtain that 
\[ (C_{p,q,n,\gamma})^{-1} \||\nabla f|\|_{L^p} \leq \|\Delta^{\gamma} f \|_{L^q} \leq C_{p,q,n,\gamma} \||\nabla f|\|_{L^p} \quad \text{whenever $p,q\in [1,\infty]$ and $\gamma>0$,}\]
meaning that \eqref{Rpggbis} and \eqref{RRpggbis} always holds. Computing the optimal constant remains an interesting question, see for instance \cite{ELP}, but will not be the focus of the current article. And in the rest of the presentation, we will always assume that the considered metric space is non-compact.
\end{remark}

In the continuous setting (Riemannian manifolds or cable systems), we instead consider the regularized versions of \eqref{Rpggbis} and \eqref{RRpggbis}:
\begin{equation}\label{Rpgmbis}\tag{R$_{p,\gamma}$}
\|\nabla e^{-\Delta} f\|_{L^p} \leq C \|\Delta^\gamma f\|_{L^p},
\end{equation}
and
\begin{equation}\label{RRpgmbis}\tag{RR$_{p,\gamma}$}
\|\Delta^\gamma e^{-\Delta} f\|_{L^p} \leq C \|\nabla f\|_{L^p},
\end{equation}
for \(f \in \mathcal D_{L^2}(\Delta)\), the domain of \(\Delta\) in \(L^2\).
The distinction between discrete and continuous settings is essential. On graphs, both the Laplacian and the gradient are bounded on \(L^p\), and it is therefore meaningful to compare \(\nabla\) with arbitrary powers of \(\Delta\). In contrast, on Riemannian manifolds or cable systems, the local structure is Euclidean, and a heat semigroup regularization is needed to isolate the genuinely large-scale, fractal behavior.

The inequalities \eqref{Rpggbis} and \eqref{RRpggbis}, referred to as quasi-Riesz inequalities and reverse quasi-Riesz inequalities, were first introduced in \cite{Chen14}. Positive and negative results in fractal-type settings were subsequently obtained in \cite{DRY}, \cite{DR24} and \cite{Fen26}. A first indication that the exponent \(1/2\) is not the correct scaling for the gradient in the sub-Gaussian regime was obtained by Devyver and Russ in \cite{DR24}. For Vicsek cable systems of dimension \(D\), they showed that the reverse inequality is governed by the critical exponent
\[
\gamma^*(p):=\frac1{D+1}+\frac1p\left(1-\frac{2}{D+1}\right),
\]
rather than \(1/2\). More precisely, they proved the following result.

\begin{theorem}[{\cite[Theorem 1.8]{DR24}}] \label{ThDR}
Let \(G\) be a Vicsek cable system satisfying the Ahlfors regularity condition
\[
V(x,r)\simeq r^{D}, \qquad r\geq 1.
\]
Then:
\begin{enumerate}[(i)]
\item the estimate \eqref{RRpgmbis} holds for \(p\geq 2\) whenever \(\gamma \geq \tfrac12\), and for \(1<p<2\) whenever
\[
\gamma > \gamma^*(p):= \frac1{D+1}+\frac1p\left(1-\frac{2}{D+1}\right);
\]
\item the estimate \eqref{RRpgmbis} fails whenever
\[
\gamma < \gamma^*(p).
\]
\end{enumerate}
By duality, this further implies the failure of \eqref{Rpgmbis} for all \(p\in (1,\infty)\) and \(\gamma > \gamma^*(p)\).
\end{theorem}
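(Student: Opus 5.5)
We have to plan a proof of Theorem~\ref{ThDR}, which is cited from Devyver--Russ. The plan has three parts.

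\emph{Positive part (i).} Write the regularized operator through the semigroup,
\[
\Delta^\gamma e^{-\Delta} f = c_\gamma \int_0^\infty t^{-\gamma}\, \Delta e^{-(t+1)\Delta} f\, \frac{dt}{t}.
\]
Then write $\Delta = \nabla^*\nabla$, so that $\Delta e^{-s\Delta} f = e^{-s\Delta/2}\nabla^*\big(\nabla e^{-s\Delta/2} f\big)$ (or use $e^{-s\Delta}\nabla^*$ acting on $\nabla f$ directly). The task becomes an $L^p$ bound for $\|e^{-s\Delta}\nabla^*\|_{p\to p}$, i.e.\ by duality for $\|\nabla e^{-s\Delta}\|_{p'\to p'}$.

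On Vicsek cable systems the walk dimension is $D+1$. The gradient of the heat kernel behaves like $s^{-1/(D+1)}$ in $L^1$ and like $s^{-1/2}$ in $L^2$, the latter by the spectral theorem. The $L^1$ rate follows from the tree structure: the heat kernel is essentially one-dimensional along geodesics of length $s^{1/(D+1)}$ with total mass one, so its total variation is $O(1)$ and is spread over scale $s^{1/(D+1)}$.

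Riesz--Thorin interpolation between these two rates gives
\[
\|\nabla e^{-s\Delta}\|_{q\to q}\lesssim s^{-\alpha(q)}, \qquad 1\le q\le 2,
\]
with $\alpha$ affine in $1/q$. For $q\ge 2$ one has $\alpha=1/2$ by a $p\ge2$ argument, using the boundedness of the Riesz transform in that range from \cite{CCFR15}. Applying this with $q=p'$ yields
\[
\|e^{-s\Delta}\nabla^*\|_{p\to p}\lesssim s^{-\alpha(p')}.
\]
For $1<p<2$ a computation gives $\alpha(p')=1-\gamma^*(p)$, and for $p\ge2$ it gives $\alpha(p')=\tfrac12$. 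The $t$-integral
\[
\int_0^\infty t^{-\gamma}(1+t)^{-\alpha(p')}\,\frac{dt}{t}
\]
then converges at infinity exactly when $\gamma+\alpha(p')>1$. The factor $e^{-\Delta}$ handles $t\to0$ when $\gamma<1$; for $\gamma\ge1$ one uses bounded holomorphic calculus.

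The main obstacle is the sharp $L^1$ gradient estimate $\|\nabla e^{-s\Delta}\|_{1\to1}\lesssim s^{-1/(D+1)}$. I would prove it by pointwise sub-Gaussian bounds on $\partial_x p_s$ along edges, exploiting harmonicity away from the branch points. The endpoint case $p\ge2$, $\gamma=\tfrac12$ also needs care: there the integral diverges logarithmically, so instead I would deduce it by duality from the \eqref{Rpbis} results of \cite{CCFR15}, via Lemma~\ref{lemDuality}.

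\emph{Negative part (ii).} I would test against functions adapted to the fractal geometry. Take $f$ equal to $1$ on a Vicsek block of size $R$, decaying linearly along a single branch of length $R$, so that $\|\nabla f\|_p^p\approx R\cdot R^{-p}$. By contrast, $\Delta^\gamma e^{-\Delta}f$ is spread over a region of volume $R^D$ and has size $R^{-\gamma(D+1)}\cdot R\cdot R^{-D}\cdot R^D$, using the scaling $\Delta\sim R^{-(D+1)}$ and heat-kernel lower bounds. Comparing the powers of $R$ as $R\to\infty$ shows that the inequality fails for $\gamma<\gamma^*(p)$.

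\emph{Dual statement.} The failure of \eqref{Rpgmbis} for $\gamma>\gamma^*(p)$ then follows from Lemma~\ref{lemDuality}, using the identity $\gamma^*(p)+\gamma^*(p')=1$.
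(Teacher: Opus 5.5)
The paper gives no proof of this statement; it is quoted from \cite{DR24}. There is no in-paper argument to compare with, so your plan has to stand on its own. Much of the skeleton of (i) is sound: subordination, reduction to the decay of $\|e^{-s\Delta}\nabla^*\|_{p\to p}=\|\nabla e^{-s\Delta}\|_{p'\to p'}$, convergence iff $\gamma+\alpha>1$, and the case $p\ge 2$ via \cite{CCFR15} and Lemma~\ref{lemDuality}. In the subordination formula the weight must be $t^{1-\gamma}\,\frac{dt}{t}$, since with $t^{-\gamma}\,\frac{dt}{t}$ it diverges at $0$. The final duality step is also fine. The genuine gap is the decay exponent for $1<p<2$, which is the whole content of (i).

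\textbf{Part (i).} Your ingredients do not produce the exponent you claim.
\begin{itemize}
\item For $1<p<2$ you use $q=p'>2$, where your own rule gives $\alpha=\frac12$. That would yield \eqref{RRpgmbis} for every $\gamma\in(\frac12,1)$. This contradicts (ii), since $\gamma^*(p)>\frac12$ for $p<2$ and $D>1$.
\item So ``$\alpha=\frac12$ for $q\ge2$'' is false, and \cite{CCFR15} does not support it: there the Riesz transform is bounded only for $1<q\le 2$ and fails for $q>2$.
\item Conversely, your $L^1$--$L^2$ interpolation gives $\alpha(q)=\gamma^*(q')$ on $[1,2]$. Hence $\alpha(p')=\gamma^*(p)$ for $p\ge 2$, which is neither $1-\gamma^*(p)$ nor $\frac12$.
\end{itemize}
The rate $s^{-1/(D+1)}$ belongs at the other endpoint. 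It is the decay of $\nabla e^{-s\Delta}$ on $L^\infty$, equivalently of $e^{-s\Delta}\nabla^*$ on $L^1$. Your heuristic (an $O(1)$ oscillation spread over scale $s^{1/(D+1)}$) describes $e^{-s\Delta}g$ for bounded $g$, e.g.\ $g$ jumping from $0$ to $1$ across a cut point. It does not describe the heat kernel, whose height is $s^{-D/(D+1)}$.

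The fix is as follows. Interpolate $\|\nabla e^{-s\Delta}\|_{\infty\to\infty}\lesssim s^{-1/(D+1)}$ with the $L^2$ bound to get $s^{-\gamma^*(q)}$ on $[2,\infty]$. Taking $q=p'$ and using $\gamma^*(p')=1-\gamma^*(p)$, the integral converges exactly when $\gamma>\gamma^*(p)$. That endpoint comes from two ingredients:
\begin{itemize}
\item the $L^1$ Poincar\'e inequality with constant $r^D$;
\item the time-derivative heat kernel bounds.
\end{itemize}
Together they give $\|\Delta e^{-s\Delta}f\|_1\lesssim s^{-1/(D+1)}\|\nabla f\|_1$; this is the continuous analogue of Lemma~\ref{lemLPq} with $q=1$ and Poincar\'e exponent $D$. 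Since every $1$-form on a tree is a gradient, this bounds $e^{-s\Delta}\nabla^*$ on all of $L^1$.

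\textbf{Part (ii).} The test function is the right one, but the displayed size $R^{-\gamma(D+1)}\cdot R\cdot R^{-D}\cdot R^{D}=R^{1-\gamma(D+1)}$ would give the threshold $\frac{2+(D-1)/p}{D+1}$. The correct count is height $R^{-\gamma(D+1)}$ on volume $R^D$ against $\|\nabla f\|_p\approx R^{\frac1p-1}$. This gives the ratio $R^{(D+1)(\gamma^*(p)-\gamma)}$.

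The lower bound also needs an argument rather than scaling. For instance, set $g=e^{-\Delta}f$ and $t=CR^{D+1}$. Use $\|(I-e^{-t\Delta})g\|_p\lesssim t^{\gamma}\|\Delta^\gamma g\|_p$, together with $\|e^{-t\Delta}g\|_p\le\|e^{-t\Delta}\|_{1\to p}\|f\|_1\le\frac12\|g\|_p$ for $C$ large. Only upper heat kernel bounds are used. Take $f$ supported in the parent block, so that the branches on which it is constant are bounded.
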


\begin{figure}[!ht]
\centering
\caption{Theorem~\ref{ThDR}: validity region for \eqref{RRpgmbis} on Vicsek cable systems.}
\label{figRpV}
\includegraphics[width=0.5\textwidth]{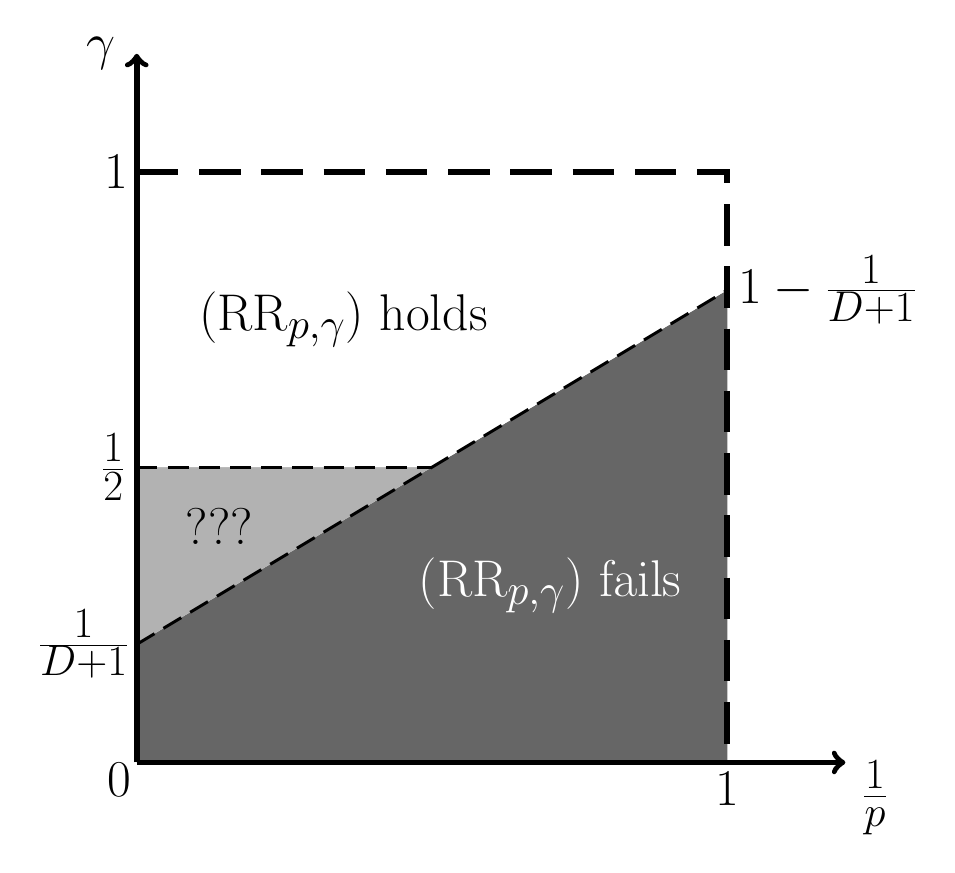}
\end{figure}

Devyver and Russ conjectured that \eqref{RRpgmbis} should also hold in the grey region in Figure~\ref{figRpV}, namely when \(p\in (2,\infty)\) and \(\gamma^*(p) < \gamma < \tfrac12\). This range is particularly challenging, since it corresponds to the regime where \(\gamma\) drops below the Euclidean threshold \(1/2\). The main goal of this article is to prove their conjecture in the setting of Vicsek graphs, which are the discrete analogue of Vicsek cable systems.

We also study the validity of \eqref{Rpgmbis} outside the critical value \(\gamma=\gamma^*(p)\), thereby establishing for the first time\footnote{In a non-compact space, see Remark \ref{rmkfinite}.} \(L^p\)-boundedness results for {\bf super-Riesz transforms}, namely operators of the form \(\nabla \Delta^{-\gamma}\) with \(\gamma>1/2\).

To achieve this, we establish a sufficient condition, expressed in terms of Poincar\'e inequalities, ensuring the validity of \eqref{RRpggbis}. We then exploit the tree structure of the Vicsek graph to show that Sobolev spaces (and hence Poincar\'e inequalities) interpolate, and that \eqref{RRpggbis} implies (R$_{p',\gamma}$), leading to the desired result.

\subsection{Main results}

Our first result completes the picture presented in Figure~\ref{fig1} concerning the Riesz and reverse Riesz-type inequalities on Vicsek graphs.

\begin{theorem} \label{ThVicsek}
Let $(G,\mu)$ be a $D$-dimensional Vicsek graph. Then, for any $p\in (1,\infty)$ and any $\gamma$ satisfying
\[
0 < \gamma < \gamma^*(p):= \frac1{D+1} + \frac1p \Big(1-\frac{2}{D+1}\Big),
\]
the Riesz transform inequality \eqref{Rpggbis} holds and the reverse Riesz transform inequality \eqref{RRpggbis} fails.

\medskip

By duality, for any $p\in (1,\infty)$ and any $\gamma$ satisfying
\[
\gamma^*(p) < \gamma < 1,
\]
the Riesz transform inequality \eqref{Rpggbis} fails and the reverse Riesz transform inequality \eqref{RRpggbis} holds.
\end{theorem}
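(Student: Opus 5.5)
We organise the proof around the reverse inequality \eqref{RRpggbis} and transfer everything to \eqref{Rpggbis} by duality. The key symmetry is
\[
\gamma^*(p)+\gamma^*(p') = \frac{2}{D+1} + \Big(1-\frac{2}{D+1}\Big) = 1 .
\]
Lemma~\ref{lemDuality}, in the form announced in the introduction (``\eqref{RRpggbis} implies (R$_{p',\gamma}$)''), should give the following. If (RR$_{p,\gamma}$) holds, then (R$_{p',1-\gamma}$) holds, because $\Delta = \nabla^*\nabla$. Indeed, for compactly supported $f$ and a vector field $F$,
\[
\langle \nabla f, F\rangle = \langle \Delta^{1-\gamma} f, \Delta^{\gamma-1}\nabla^* F\rangle ,
\]
and $\|\Delta^{\gamma-1}\nabla^*F\|_{p}$ is controlled using (RR$_{p,\gamma}$) applied to $g=\Delta^{\gamma-1}\nabla^*F$. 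The condition $\gamma>\gamma^*(p)$ is equivalent to $1-\gamma<\gamma^*(p')$. So the positive half of the first statement, that \eqref{Rpggbis} holds for $\gamma<\gamma^*(p)$, follows from the positive half of the second statement, that \eqref{RRpggbis} holds for $\gamma>\gamma^*(p)$, applied at $p'$.

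Conversely, if \eqref{RRpggbis} held for some $\gamma<\gamma^*(p)$, then \eqref{Rpggbis} would hold at $p'$ with exponent $1-\gamma>\gamma^*(p')$. Hence all the failures reduce to the failure of \eqref{Rpggbis} for $\gamma>\gamma^*(p)$ at every $p$. To prove it we will test on explicit functions. On a Vicsek ball of radius $R=3^n$ we take $f$ to be a harmonic-type ``linear along the branches'' function: the analogue of the Devyver--Russ test function, equal to $1$ on an inner ball and decaying linearly along the backbone geodesics. Along the diagonal skeleton of length $\sim R$ we get $|\nabla f|\sim 1/R$, so
\[
\|\nabla f\|_p \gtrsim R^{-1}R^{1/p}.
\]
On the other hand, $\Delta^\gamma f$ is estimated by subordination, $\Delta^\gamma = c\int_0^\infty t^{-\gamma}\Delta e^{-t\Delta}\,dt$, together with the sub-Gaussian heat kernel bounds with walk dimension $D+1$. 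This should give
\[
\|\Delta^\gamma f\|_p\lesssim R^{-\gamma(D+1)}R^{D/p}.
\]
Comparing the two exponents, the failure of \eqref{Rpggbis} follows exactly when $\gamma>\frac1{D+1}+\frac{D-1}{(D+1)p}$, which is the threshold $\gamma^*(p)$. One must check that the off-scale contributions to $\Delta^\gamma f$ are summable; this is a routine dyadic decomposition in $t$.

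The heart of the matter is the positive statement: \eqref{RRpggbis} holds for $\gamma^*(p)<\gamma<1$. We will deduce it from the general criterion advertised in the abstract, linking the escape rate of the diffusion and an $L^p$ Poincaré inequality on balls to (RR$_{p,\gamma}$). We write
\[
\Delta^\gamma f = c\int_0^\infty t^{-\gamma}\Delta e^{-t\Delta} f\,dt .
\]
We split the integral at scale $t\sim 1$ and then dyadically, and for each $t$ we bound $\Delta e^{-t\Delta}f = \nabla^*\big(\nabla e^{-t\Delta}f\big)$ type terms. It is more convenient to bound $t\Delta e^{-t\Delta}(f-f_B)$ on balls $B$ of radius $r=t^{1/(D+1)}$. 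Using the sub-Gaussian off-diagonal decay, the $L^p$ norm of $t\Delta e^{-t\Delta}f$ is controlled by local oscillations of $f$. A Poincaré inequality on balls of the form
\[
\|f-f_B\|_{L^p(B)}\lesssim r^{\alpha_p}\|\nabla f\|_{L^p(B)}
\]
then yields an $L^p$ bound of size $t^{-1}r^{\alpha_p}\|\nabla f\|_p$, and the $t$-integral converges at infinity precisely when $\gamma>\alpha_p/(D+1)$. At small $t$ everything is trivial because $\Delta$ is bounded on graphs.

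On the Vicsek graph the sharp Poincaré exponent is $\alpha_p = 1+(D-1)/p$. Indeed, the $p$-capacity across a ball behaves as a one-dimensional path of length $r$ inside a volume $r^D$. This gives exactly $\alpha_p/(D+1)=\gamma^*(p)$.

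The main obstacle is establishing the Poincaré inequality with this sharp exponent for all $p$. At $p=1$ the exponent $\alpha_1=D$ follows from the tree structure, since $f$ is determined by integrating $\nabla f$ along the unique geodesic to the centre. At $p=\infty$ the exponent $\alpha_\infty=1$ follows from the diameter bound. For intermediate $p$ we will use the interpolation of Sobolev spaces on the tree announced in the introduction: build a bounded linear ``integrate along geodesics from a root'' operator mapping $\nabla f\mapsto f-f_B$, and interpolate it between these two endpoints with Riesz--Thorin. I expect that making the escape-rate/off-diagonal argument compatible with the non-Gaussian tails at the endpoint $\gamma$ close to $\gamma^*(p)$ will require care but no new idea.
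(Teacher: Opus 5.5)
\paragraph{The gap: the duality step.} You use the implication {\normalfont (RR$_{p,\gamma}$)} $\Rightarrow$ {\normalfont (R$_{p',1-\gamma}$)} twice: for the positive half of \eqref{Rpggbis}, and to reduce the failure of \eqref{RRpggbis} to that of \eqref{Rpggbis}. Your justification does not work. Lemma~\ref{lemDuality} proves only the opposite implication, {\normalfont (R$_{p,\gamma}$)} $\Rightarrow$ {\normalfont (RR$_{p',1-\gamma}$)}, which holds on every graph. The converse does not follow from $\Delta=\nabla^*\nabla$. In your computation, {\normalfont (RR$_{p,\gamma}$)} bounds $\|\Delta^\gamma g\|_p$ by $\|\nabla g\|_p$. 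Applied to $g=\Delta^{\gamma-1}\nabla^*F$, it controls $\Delta^{2\gamma-1}\nabla^*F$, which is not the quantity you need. The only sensible choice is $g=\Delta^{-1}\nabla^*F$. That leaves you with $\|\nabla\Delta^{-1}\nabla^*F\|_p$, the $L^p$ norm of the projection of $F$ onto exact $1$-forms. This projection is not $L^p$-bounded in general, and $\Delta^{-1}$, $\Delta^{\gamma-1}$ are not even defined on $L^p$. As written, you have therefore not proved that \eqref{Rpggbis} holds, nor that \eqref{RRpggbis} fails, for $\gamma<\gamma^*(p)$.

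\paragraph{The missing idea: every $1$-form on a tree is exact.} Let $T$ be the primitive along geodesics from a root; this is the same operator you use later for the Poincar\'e inequality. Then $F=\nabla TF$, so $\nabla^*F=\Delta TF$, and
\[
\langle \nabla f,F\rangle=\langle \Delta f,TF\rangle=\langle \Delta^{1-\gamma}f,\Delta^{\gamma}TF\rangle .
\]
Applying {\normalfont (RR$_{p,\gamma}$)} to $TF$ gives $\|\Delta^\gamma TF\|_p\lesssim\|\nabla TF\|_p=\|F\|_p$. This is exactly Assertion (d) of the paper. With this inserted, your overall scheme closes.

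\paragraph{Comparison with the paper on the other parts.} The rest is sound, but partly takes a different route.
\begin{itemize}
\item Positive part of \eqref{RRpggbis}. You bypass the Lusin functional, the tent spaces and the maximal function, and a triangle inequality over scales suffices. In discrete form, write $\Delta^\gamma f=\sum_k a_kP^{k-1}\Delta f$ with $a_k\approx k^{-\gamma}$. Lemma~\ref{lemLPq} with $q=p$, plus Fubini, gives $\|P^{k-1}\Delta f\|_p\lesssim k^{s/\beta-1}\||\nabla f|\|_p$. This is simpler than the paper's proof and even allows $p=q$ in Theorem~\ref{ThMain}. For small $t$ you need $\|\Delta f\|_p\lesssim\||\nabla f|\|_p$, i.e.\ boundedness of $\nabla^*$, not merely boundedness of $\Delta$.
\item Poincar\'e inequality. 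Your interpolation argument is the paper's Corollary~\ref{CorPqV}.
\item Failures. The paper avoids test functions. It interpolates a hypothetical {\normalfont (RR$_{p_0,\gamma_0}$)} with $\gamma_0<\gamma^*(p_0)$ against Assertion (a), via Corollary~\ref{CorRRpInterp}, to reach {\normalfont (RR$_{2,\gamma}$)} with $\gamma<\frac12$. This is impossible by $L^2$ spectral theory, and the failure of \eqref{Rpggbis} then follows from Lemma~\ref{lemDuality}.
\end{itemize}
Your test-function route can be made to work, but it is less routine than you claim.
\begin{itemize}
\item The bound $\|\Delta^\gamma f\|_p\lesssim R^{D/p-(D+1)\gamma}$ is false for $\gamma>\frac{1+D/p}{D+1}$: near the points where $\Delta f\neq 0$ one has $|\Delta^\gamma f|\approx R^{-1}$. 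This is harmless, since failure of \eqref{Rpggbis} at some $\gamma$ propagates to all larger $\gamma$, so it suffices to treat $\gamma$ just above $\gamma^*(p)$.
\item For $\gamma^*(p)<\gamma\le\frac{1+D/p}{D+1}$, the crude bound on the large-$t$ part of the subordination integral diverges. You must use the cancellation $\sum\Delta f\,m=0$ together with spatial regularity of order at least $1/p$ on the heat kernel. On a tree this is available as the Lipschitz bound $|\nabla_y h_t(z,y)|\lesssim t^{-1}$ (with off-diagonal decay), obtained by summing $\partial_t h_t(z,\cdot)$ over the subtree cut off by the edge.
\end{itemize}
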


\begin{figure}[!ht]
\centering
\caption{Theorem~\ref{ThVicsek}: validity region for \eqref{Rpggbis} and \eqref{RRpggbis} on Vicsek graphs.}
\label{figRpVV}
\includegraphics[width=0.95\textwidth]{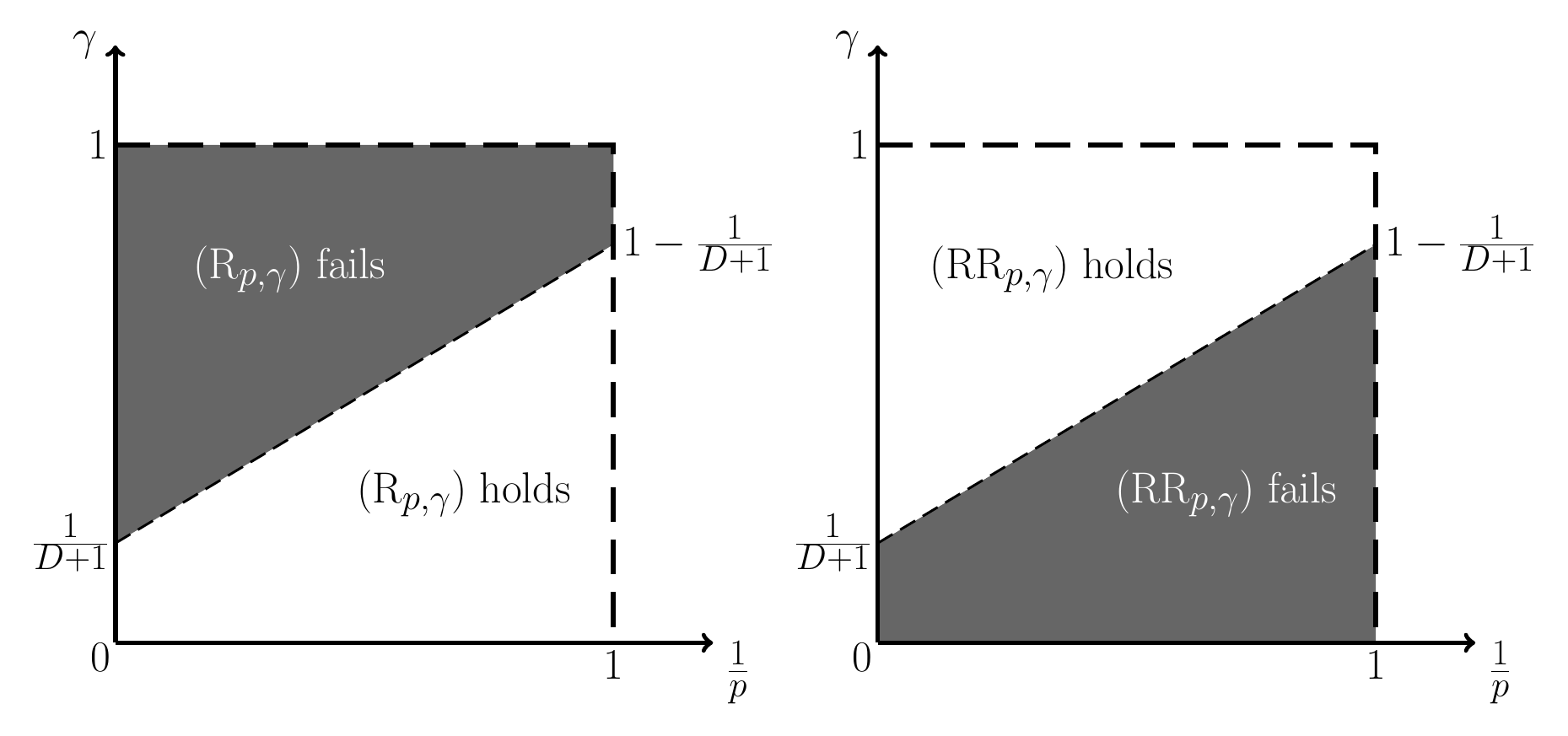}
\end{figure}

To establish this result on Vicsek graphs, we analyze the interplay between \(L^q\) Poincar\'e inequalities on balls, pointwise sub-Gaussian estimates for the diffusion semigroup kernel, and reverse Riesz-type inequalities. More precisely, we relate the growth rate of the Poincar\'e constant on balls of radius \(r\), the escape rate of the associated diffusion, and the power of \(\Delta\) that can be compared with the gradient. This strategy is not entirely surprising, and in fact a related approach already appears in the work of Devyver and Russ \cite{DR24}.

\medskip

Our second main result can be stated as follows.  The precise notion of graphs and of our assumptions can be found in Subsections~\ref{defgraphs} and \ref{Sscondgraphs}.

\begin{theorem} \label{ThMain}
Let $(G,\mu)$ be a (weighted, unoriented, infinite, connected) graph satisfying:
\begin{itemize}
\item the condition \eqref{LB}, which yields analyticity of the associated diffusion and a uniform lower bound on the transition density;
\item the doubling property of balls \eqref{D};
\item a sub-Gaussian pointwise estimate for the random walk \ref{UE}, where $\beta$ is such that the average  escape time from balls of radius $r$ is of order $r^\beta$;
\item an $L^q$-Poincar\'e inequality on balls \ref{Pq}, with constant growing like $r^s$ for balls of radius $r$.
\end{itemize}
Then, for any $p\in (q,\infty)$ and any $\gamma \in (\tfrac{s}{\beta},1)$, the estimate \eqref{RRpggbis} holds; that is, there exists $C>0$ such that
\[
\|\Delta^\gamma f\|_{L^p(G)} \leq C \|\nabla f\|_{L^p(G)}
\quad \text{for all compactly supported functions } f \text{ on } G.
\]
\end{theorem}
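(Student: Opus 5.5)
We write $\Delta^\gamma = c_\gamma\int_0^\infty t^{-\gamma}\,\Delta e^{-t\Delta}\,dt$ (valid on graphs since $\Delta$ is bounded, with $\Delta = I-P$ for the Markov operator $P$ in the discrete-time picture). Equivalently we use the discrete analogue $\Delta^\gamma = \sum_{k\ge 0} a_k (I-P)P^k$ with $a_k\simeq k^{-\gamma}$. The plan is to bound $\|\Delta^\gamma f\|_p$ by splitting the $t$-integral (or $k$-sum) into dyadic pieces $t\sim 2^j$. On each piece we estimate $\Delta e^{-t\Delta}f$ in terms of $|\nabla f|$, using the Poincar\'e inequality on balls of radius $r=t^{1/\beta}$. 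For $t\le 1$ the bound is trivial, since $\Delta$ and $e^{-t\Delta}$ are bounded and $\|\Delta f\|_p\lesssim \|\nabla f\|_p$ on a graph with \eqref{LB}.

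The key step is the pointwise estimate
\[ |\Delta e^{-t\Delta} f(x)| \lesssim t^{-1}\, r^{s}\Big(\fint_{B(x,Cr)} |\nabla f|^q\Big)^{1/q} + \text{tails}, \qquad r=t^{1/\beta}. \]
To obtain it, we write $\Delta e^{-t\Delta}f(x)=\sum_y \partial_t p_t(x,y)\,(f(y)-f_{B})\mu(y)$, using $\sum_y \partial_t p_t(x,y)\mu(y)=0$. Analyticity (from \eqref{LB}) and the sub-Gaussian bound \ref{UE} give $|\partial_t p_t(x,y)|\lesssim t^{-1}V(x,r)^{-1}\exp(-c(d(x,y)/r)^{\beta/(\beta-1)})$. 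We then decompose into annuli $B(x,2^k r)$, apply \ref{Pq} on each ball $B(x,2^kr)$ (constant $(2^kr)^s$), and use doubling \eqref{D} together with telescoping of averages. The exponential decay absorbs the polynomial factors $2^{k(s+\text{doubling})}$, so the estimate becomes
\[ |\Delta e^{-t\Delta}f(x)|\lesssim t^{-1+s/\beta} \sum_k e^{-c2^{k\beta/(\beta-1)}}2^{kN} \big(M(|\nabla f|^q)(x)\big)^{1/q}, \]
where $M$ is the Hardy--Littlewood maximal function.

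We then integrate in $t$. The piece $t\ge 1$ contributes $\int_1^\infty t^{-\gamma-1+s/\beta}\,dt<\infty$ precisely because $\gamma>s/\beta$. This yields the pointwise bound $|\Delta^\gamma f|\lesssim (M|\nabla f|^q)^{1/q}+|\Delta f|$. Since $p>q$, the operator $M$ is bounded on $L^{p/q}$ by doubling, which gives $\|\Delta^\gamma f\|_p\lesssim\|\nabla f\|_p$. The condition $\gamma<1$ ensures convergence of the subordination formula at small $t$ (there we use $\|\Delta e^{-t\Delta}f\|_p\lesssim \|\Delta f\|_p\lesssim\|\nabla f\|_p$, integrable against $t^{-\gamma}$).

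The main obstacle is making the discrete-time-derivative bound $|\partial_t p_t|$ rigorous, with the correct off-diagonal decay, from \eqref{LB} and \ref{UE}. This requires the standard argument that analyticity plus Gaussian-type upper bounds yield time-derivative bounds, e.g.\ via complex time or via $P^{k}-P^{k+1}$ estimates. A second delicate point is the telescoping of averages across annuli, which must be done carefully so that the Poincar\'e constants $(2^k r)^s$ remain dominated by the heat kernel decay. If the pointwise time-derivative bound proves hard, the fallback is the weaker but sufficient estimate $\sup_y V(x,r)\,|\partial_t p_t(x,y)|\,e^{c(d/r)^{\beta/(\beta-1)}}\lesssim t^{-1}$ derived from $L^2$ Davies--Gaffney-type bounds combined with the semigroup property.
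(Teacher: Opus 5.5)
Your proof is correct, but it recombines the time scales differently from the paper, and more simply. The local ingredient is the same: your key pointwise estimate is exactly Lemma~\ref{lemLPq}, built from the time-derivative bound \eqref{dkpk}, the cancellation of $\partial_k p_{k-1}$, and \ref{Pq} on dyadic balls. The ``main obstacle'' you flag is precisely \eqref{dkpk}, which the paper also imports from the literature rather than proving.

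The paper proceeds in three steps:
\begin{itemize}
\item It applies the reverse Lusin inequality of Theorem~\ref{theoLa} with $\alpha=1-\gamma$, giving $\|\Delta^\gamma f\|_{L^p}\lesssim\|\mathcal A(\{k^{1-\gamma}\Delta P^{k-1}f\}_{k})\|_{L^p}$.
\item It pushes Lemma~\ref{lemLPq} through the tent-space Lemmas~\ref{lemLP2A} and~\ref{lemLPc}.
\item Only then does it dominate by $(\mathcal M|\nabla f|^q)^{1/q}$, using $\sum_k k^{2(s/\beta-\gamma)-1}<\infty$.
\end{itemize}

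You instead sum the pointwise bound directly through the expansion $\Delta^\gamma=\sum_{k\ge0}a_k\,\Delta P^k$ with $a_k\approx(1+k)^{-\gamma}$. This converges in operator norm on $L^p$, since $a_k\to0$ and the binomial coefficients of $(1-z)^\gamma$ are absolutely summable. You then need only $\sum_k k^{s/\beta-\gamma-1}<\infty$.

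This is the $\ell^1$-in-time counterpart of the paper's $\ell^2$-in-time square-function estimate. Both series converge exactly when $\gamma>s/\beta$, so nothing is lost. Your route bypasses all of Section~\ref{S3}: the $g$-function bounds, Fefferman--Stein, change of aperture, and tent-space duality. It even yields the pointwise domination $|\Delta^\gamma f|\lesssim(\mathcal M|\nabla f|^q)^{1/q}$, so the $L^p$ bound for $p>q$ follows from the maximal theorem alone. The square-function framework would only pay off if the decay in $k$ were square-summable but not summable, which is not the case here.

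I recommend presenting the discrete series rather than the continuous subordination integral. Both \ref{UE} and \eqref{dkpk} are discrete-time statements, and in the discrete form there is no separate small-time piece, since Lemma~\ref{lemLPq} already covers $k=1$. In your continuous form, the small-$t$ contribution is $\int_0^1 t^{-\gamma}e^{-t\Delta}\Delta f\,dt$, not $|\Delta f|$ pointwise. This is harmless in $L^p$, because $|\Delta f|\le\sqrt2\,|\nabla f|$ by Cauchy--Schwarz.
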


The previous theorem yields the following consequence.

\begin{corollary} \label{CorMain}
Let $(G,\mu)$ be a (weighted, unoriented, infinite, connected) graph satisfying:
\begin{itemize}
\item the condition \eqref{LB}, which yields analyticity of the associated diffusion and a uniform lower bound on the transition density;
\item the Ahlfors regularity condition \ref{AR}, where $D$ is the Ahlfors dimension;
\item a sub-Gaussian pointwise estimate for the random walk \ref{UE}, where $\beta$ is such that the average escape time from balls of radius $r$ is of order $r^\beta$.
\end{itemize}
If $D < \beta$, then for any $p\in (1,\infty)$ and any $\gamma \in (\tfrac{D}{\beta},1)$, the reverse quasi-Riesz transform estimate \eqref{RRpggbis} holds.
\end{corollary}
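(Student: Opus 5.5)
The plan is to derive Corollary~\ref{CorMain} from Theorem~\ref{ThMain}. For this we must check two things. First, Ahlfors regularity \ref{AR} implies the doubling property \eqref{D}. Second, when $D<\beta$ the graph satisfies an $L^q$-Poincar\'e inequality on balls \ref{Pq} with $q=1$ and growth exponent $s=D$. Once both hold, Theorem~\ref{ThMain} gives \eqref{RRpggbis} for every $p\in(1,\infty)$ and every $\gamma\in(\tfrac{D}{\beta},1)$, which is exactly the claim.

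The doubling property is immediate. Since $V(x,r)\simeq r^D$ for $r\ge 1$, we get $V(x,2r)\le C\,V(x,r)$ for $r\geq 1$. For $r<1$ the ball is either the single point $\{x\}$ or a ball of radius below $2$; in the latter case it is handled by the uniform lower bound on the weights that comes with \eqref{LB}. These local bounds also give $\mu(x)\simeq 1$.

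The Poincar\'e inequality is obtained crudely. Fix a ball $B=B(x_0,r)$ and $x,y\in B$. By connectedness and the graph metric, $x$ and $y$ are joined by a path of length at most $2r$ lying in $B(x_0,2r)$. Hence
\[
|f(x)-f(y)|\le \sum_{\text{edges } e\subset 2B}|\nabla f(e)|\lesssim \sum_{z\in 2B}|\nabla f(z)|\,\mu(z).
\]
We used $\mu(z)\gtrsim 1$ here, and a bounded-degree property, which follows from \eqref{LB} together with \ref{AR}. Averaging over $y\in B$ gives
\[
\frac{1}{V(B)}\sum_{x\in B}|f(x)-f_B|\,\mu(x)\lesssim \sum_{2B}|\nabla f|\,\mu \lesssim r^D\,\frac{1}{V(2B)}\sum_{2B}|\nabla f|\,\mu .
\]
This is an $L^1$-Poincar\'e inequality on balls with constant of order $r^D$, so $s=D$. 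By H\"older, the same bound holds for every $q\ge1$ with the same $r^D$ growth. If \ref{Pq} is normalised differently, for instance as $\|f-f_B\|_{L^q(B)}\lesssim r^s\|\nabla f\|_{L^q(2B)}$ without averages, I would rerun this estimate in that form; I expect the exponent to remain $D$, possibly with an adjustment of order $D/q$ that disappears as $q\to1$.

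To cover all $p>1$, I apply Theorem~\ref{ThMain} with $q=1$, or with $q$ close to $1$ if the theorem requires $q>1$. The hypothesis $D<\beta$ is what makes the interval $(\tfrac{s}{\beta},1)=(\tfrac{D}{\beta},1)$ nonempty.

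The main obstacle is the bookkeeping in the second step: matching the exact normalisation of \ref{Pq} in Subsection~\ref{Sscondgraphs} to the crude path argument, and confirming that the exponent comes out as $s=D$, not $D$ plus a term depending on $q$. If a $q$-dependent loss such as $s=D(1-\tfrac1q)+\ldots$ appeared, I would push $q\downarrow1$, since $p$ and $\gamma$ are fixed with strict inequalities.
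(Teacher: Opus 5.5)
Your proposal is correct and follows the paper's route. \ref{AR} gives \eqref{D} with dimension $D$, and your crude path argument is exactly the paper's Lemma~\ref{lemP1}, yielding (P$_1$)$_D$. Theorem~\ref{ThMain} is then applied with $q=1$ and $s=D$.

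Your normalisation worry is unfounded. In the paper's non-averaged form of \ref{Pq} with $q=1$, route each path through the centre of $B$, so it stays inside $B$. This gives $\sum_B|f-f_B|\,m\lesssim V(B)\sum_B|\nabla f|\,m\lesssim r^D\sum_B|\nabla f|\,m$, so $s=D$ exactly, with no $2B$ enlargement and no $q$-dependent loss.
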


This corollary can be related to \cite[Theorem 1.4]{DRY}. In that work, the authors establish a variant of \eqref{Rpgmbis} for certain cable systems, assuming—in addition to conditions analogous to ours—a reverse H\"older estimate for the gradient of solutions to $\Delta u = 0$. Ignoring the difference in setting, the assumptions in \cite{DRY} are stronger than ours, but they obtain a stronger result than ours: the validity of \eqref{Rpgmbis} for $p\in (1,\infty)$ and $\gamma \in (0,1-\tfrac{D}{\beta})$. Indeed, note that by duality \eqref{Rpgmbis} for $p\in (1,\infty)$ and $\gamma \in (0,1-\tfrac{D}{\beta})$ yields \eqref{RRpgmbis} in the same range of $p$ and $\gamma$ as in Corollary~\ref{CorMain}.

\subsection{Plan of the article and notation}

In Section~\ref{S2}, we introduce the setting of weighted, unoriented, infinite, connected graphs equipped with a diffusion and a gradient. We then present our assumptions and some immediate consequences, and conclude by defining Vicsek graphs. In Section~\ref{S3}, we develop the theory of tent spaces and Lusin-type functionals needed in the sequel. Section~\ref{S4} is devoted to the proof of Theorem~\ref{ThMain} and Corollary~\ref{CorMain}, while Section~\ref{S5} focuses on Vicsek graphs: we prove complex interpolation results for Sobolev spaces, and combine them with the results of Section~\ref{S4} to establish Theorem~\ref{ThVicsek}.

\medskip

Throughout the paper, we write $A \lesssim B$ if $A \leq C B$ for some constant $C$ whose dependence on parameters is either specified or clear from context. We write $A \approx B$ if $A \lesssim B$ and $B \lesssim A$.

\subsection*{Acknowledgments} The author would like to thank Cédric Arhancet for his valuable comments on an earlier version of this manuscript, particularly for drawing attention to the existence of super-Riesz transforms on finite graphs (see Remark~\ref{rmkfinite}).

\section{The discrete setting} \label{S2}

\subsection{Definitions of gradient and Laplacian on graphs} \label{defgraphs}

Let \(G\) be an infinite set and let \(\mu_{xy}=\mu_{yx}\ge 0\) be a symmetric weight on \(G\times G\).
The pair \((G,\mu)\) induces a (weighted, unoriented) graph structure by defining the set of edges
\[
E_G := \{(x,y)\in G\times G \,:\, \mu_{xy}>0\}.
\]
We say that \(x\) and \(y\) are neighbors, and write \(x\sim y\), if and only if \((x,y)\in E_G\).
The \textbf{distance} \(\delta(x,y)\) between two vertices \(x,y\in G\) is the length of the shortest path connecting them, i.e.
\begin{multline*}
\delta(x,y) := \min\Big\{N\in \mathbb{N} \,:\, \exists (x_0,\dots,x_N)\in G^{N+1} \text{ such that } \\
x_0=x,\; x_N=y,\; \text{and } (x_{i-1},x_i)\in E_G \text{ for all } i=1,\dots,N\Big\}.
\end{multline*}
We always assume that \(G\) is \textbf{connected}, meaning that \(\delta(x,y)<\infty\) for all \(x,y\in G\).

\smallskip

We define the {\bf weight} \(m(x)\) of a vertex \(x\in G\) by
\[
m(x) := \sum_{y\sim x} \mu_{xy}.
\]
More generally, the measure of a subset \(E\subset G\) is
\[
m(E):=\sum_{x\in E} m(x).
\]
For \(1\le p<\infty\), we say that a function \(f\) on \(G\) belongs to \(L^p(G,m)\) (or simply \(L^p(G)\)) if
\[
\|f\|_{L^p(G)} := \left(\sum_{x\in G} |f(x)|^p m(x)\right)^{1/p}<\infty,
\]
while \(L^\infty(G)\) consists of functions such that
\[
\|f\|_{L^\infty(G)} := \sup_{x\in G} |f(x)| < \infty.
\]

\smallskip

For \(x\in G\) and \(r>0\), we define the \textbf{ball}
\[
B(x,r):=\{y\in G \,:\, \delta(x,y)<r\}.
\]
We say that \(B\subset G\) is a ball if there exist \(x_B\in G\) and \(r_B>0\) such that \(B=B(x_B,r_B)\).
Note that the center and radius are not necessarily unique, but this will play no role in what follows.
As usual, for a ball \(B\) or \((x,r)\in G\times(0,\infty)\), we set
\[
V(B):=m(B), \qquad V(x,r):=V(B(x,r)).
\]

\smallskip

We also equip \(E_G\) with the measure \(\mu(e):=\mu_{xy}\) for \(e=(x,y)\in E_G\).
We say that \(\omega\) is a \textbf{1-form} on \(G\) if \(\omega\) is a function on \(E_G\) such that
\[
\omega(x,y)=-\omega(y,x)\qquad \text{for all } (x,y)\in E_G,
\]
in particular \(\omega(x,x)=0\).
We define
\[
|\omega|(x):=\left(\frac{1}{2m(x)}\sum_{y\sim x}\mu_{xy}|\omega(x,y)|^2\right)^{1/2}, \qquad x\in G,
\]
which allows us to view 1-forms as functions when estimating them.
For instance, we sometimes write \( |\omega|\in L^p(G)\) instead of \(\omega\in L^p(E_G,\mu)\).
Note that when \(\mu_{xy}\simeq 1\) (as in Vicsek graphs introduced later), we have
\[
|\omega|\in L^p(G) \quad \Longleftrightarrow \quad \omega\in L^p(E_G,\mu).
\]

\medskip

The \textbf{differential operators} \(\nabla\) and \(\nabla^*\) are defined by
\[
\nabla f(x,y):=f(x)-f(y), \qquad (x,y)\in E_G,
\]
for functions \(f\) on \(G\), and
\[
\nabla^*\omega(x):=\frac{1}{m(x)}\sum_{y\sim x}\mu_{xy}\,\omega(x,y), \qquad x\in G,
\]
for 1-forms \(\omega\).
One can check the duality identity
\begin{equation} \label{dd*duals}
\frac12\sum_{e\in E_G} \nabla f(e)\,\omega(e)\,\mu(e)
= \sum_{x\in G} f(x)\,\nabla^*\omega(x)\,m(x),
\end{equation}
whenever all quantities are well-defined.

\medskip

The (nonnegative) Laplacian is defined by
\begin{equation} \label{defDelta}
\Delta := \nabla^*\nabla.
\end{equation}
For \(x,y\in G\), we define the reversible Markov kernel
\[
p(x,y):=\frac{\mu_{xy}}{m(x)m(y)}.
\]
For functions \(f\) on \(G\), we define
\begin{equation}\label{Fen4defP}
Pf(x)=\frac{1}{m(x)}\sum_{y\sim x}\mu_{xy}f(y).
\end{equation}
One verifies that
\begin{equation} \label{Delta=I-P}
\Delta = I-P.
\end{equation}

The discrete kernel \(p_k(x,y)\) is defined by
\[
p_0(x,y)=\frac{\delta(x,y)}{m(y)}, \qquad
p_{k+1}(x,y)=\sum_{z\in G} p(x,z)p_k(z,y)m(z).
\]
Moreover,
\begin{equation} \label{sumpk=1}
\sum_{y\in G} p_k(x,y)m(y)=1,
\qquad
p_k(x,y)=p_k(y,x).
\end{equation}

\medskip

Finally, whenever \(|\nabla f|\in L^2(G)\),
\begin{multline}
\|\nabla f\|_{L^2(G)}^2
= \sum_{x\in G}\frac{1}{2m(x)}\sum_{y\sim x}\mu_{xy}|f(x)-f(y)|^2\,m(x) \\
= \frac12\sum_{e\in E_G}\nabla f(e)^2\,\mu(e)
= \sum_{x\in G} f(x)\Delta f(x)\,m(x)
= \|\Delta^{1/2}f\|_{L^2(G)}^2.
\end{multline}

\medskip

We finish the paragraph with the following duality result.

\begin{lemma} \label{lemDuality}
Let $(G,\mu)$ be a graphs. Then, for any $p\in (1,\infty)$ and any $\gamma \in [0,1]$, we heve
\[ \text{\eqref{Rpggbis} $\implies$ {\normalfont (RR$_{p',1-\gamma}$)}},\]
where $p'$ is the H\"older conjugate of $p$.
\end{lemma}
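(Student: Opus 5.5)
We argue by duality, using the identity \eqref{dd*duals} and the factorization $\Delta=\nabla^*\nabla$ from \eqref{defDelta}. Fix a compactly supported $f$; we want to bound $\|\Delta^{1-\gamma}f\|_{L^{p'}}$ by $\|\nabla f\|_{L^{p'}}$. Since $\Delta^{1-\gamma}f$ need not be compactly supported, we test it against compactly supported functions $g$ with $\|g\|_{L^p}\le 1$ and take the supremum, using the duality between $L^{p'}$ and $L^p$ on $(G,m)$. Note that $\Delta=I-P$ is a bounded self-adjoint contraction-type operator with spectrum in $[0,2]$. Hence $\Delta^{\theta}$ for $\theta\in[0,1]$ can be defined by the spectral theorem, or by the binomial series $\Delta^\theta=\sum_k c_k(\theta)P^k$ with $\sum|c_k|<\infty$. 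This shows $\Delta^\theta$ is bounded on every $L^r$, $1\le r\le\infty$, and symmetric with respect to $m$, and that $\Delta^{\gamma}\Delta^{1-\gamma}=\Delta$.

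The key computation is then
\[
\sum_x \Delta^{1-\gamma}f(x)\,g(x)\,m(x)=\sum_x f(x)\,\Delta^{1-\gamma}g(x)\,m(x).
\]
Formally this should equal $\frac12\sum_{e}\nabla f(e)\,\nabla\Delta^{-\gamma}g(e)\,\mu(e)$. To avoid negative powers, we instead take $g=\Delta^\gamma h$ with $h$ compactly supported. Then
\[
\sum f\,\Delta^{1-\gamma}\Delta^\gamma h\, m=\sum f\,\Delta h\,m=\frac12\sum_e \nabla f\,\nabla h\,\mu.
\]
By H\"older's inequality (pointwise Cauchy--Schwarz in the $|\cdot|$ form, then $L^{p'}$--$L^p$), this is at most $C\|\nabla f\|_{L^{p'}}\|\nabla h\|_{L^p}$. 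By \eqref{Rpggbis} this is in turn at most $C\|\nabla f\|_{L^{p'}}\|\Delta^\gamma h\|_{L^p}$. Here the absolute summability needed for \eqref{dd*duals} is automatic, since $f,h$ are compactly supported and $\nabla f$ is finitely supported. The comparison between $\sum_e$ with weight $\mu$ and $\sum_x |\cdot|(x)m(x)$ is immediate from the definition of $|\omega|$.

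The main obstacle is density. The argument only gives
\[
\Big|\langle \Delta^{1-\gamma}f, g\rangle\Big|\le C\|\nabla f\|_{p'}\|g\|_p \quad\text{for } g\in\{\Delta^\gamma h: h \text{ compactly supported}\},
\]
and we need this set to be dense in $L^p$, or at least norming for $\Delta^{1-\gamma}f$. The plan is as follows. First, $\Delta^{1-\gamma}f\in L^{p'}$ since $\Delta^{1-\gamma}$ is bounded. Second, the range of $\Delta^\gamma$ on compactly supported $h$ is dense in $\overline{\mathrm{Ran}_{L^p}(\Delta^\gamma)}$ by boundedness of $\Delta^\gamma$. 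On an infinite connected graph the kernel of $\Delta$ in $L^{p'}$ (harmonic functions in $L^{p'}$) is what obstructs density. To circumvent this, we pair directly: the annihilator of $\mathrm{Ran}(\Delta^\gamma)$ is $\ker_{L^{p'}}\Delta^\gamma$, and $\Delta^{1-\gamma}f$ lies in $\overline{\mathrm{Ran}\,\Delta^{1-\gamma}}$. We therefore use the decomposition $L^{p'}=\overline{\mathrm{Ran}\,\Delta}\oplus\ker\Delta$, valid for $1<p'<\infty$ by the mean ergodic theorem for the power-bounded operator $P$. Since $\ker\Delta^\gamma=\ker\Delta$, this shows that $\mathrm{Ran}(\Delta^\gamma)$ is norming for elements of $\overline{\mathrm{Ran}\,\Delta}$, which gives the estimate.

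If this functional-analytic step proves delicate, an alternative is to replace $\Delta^{-\gamma}$ by the approximations $(\Delta+\varepsilon)^{-\gamma}$, run the same duality computation, and let $\varepsilon\to0$ using \eqref{Rpggbis} together with the uniform bounds $\|\Delta^\gamma(\Delta+\varepsilon)^{-\gamma}\|_{p\to p}\lesssim 1$. These uniform bounds follow from analyticity of the semigroup.
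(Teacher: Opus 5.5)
Your argument is correct for $\gamma\in[0,1)$, and it closes the duality differently from the paper.

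The paper first shows that $\Delta$ (hence $\Delta^\gamma$) has dense range in $L^p$, using a maximum principle: $g\in L^{p'}$ with $\Delta g=0$ attains its maximum, hence is constant, hence vanishes. It then takes the supremum over all $g\in L^p$ with $\|\Delta^\gamma g\|_p\le 1$ and applies \eqref{Rpggbis} to such $g$.

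You test only against $\Delta^\gamma h$ with $h$ compactly supported, so \eqref{Rpggbis} is used exactly as stated. You replace density by Hahn--Banach plus the mean ergodic theorem. The norm of $\langle u,\cdot\rangle$ on $\overline{\mathrm{Ran}_{L^p}\Delta^\gamma}$ equals $\mathrm{dist}_{L^{p'}}(u,\ker\Delta)$. The ergodic projection, of norm at most $1$, then gives $\|u\|_{p'}\le 2\,\mathrm{dist}(u,\ker\Delta)$ whenever $u\in\overline{\mathrm{Ran}\,\Delta}$.

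This buys generality: you never need $\ker_{L^{p'}}\Delta=\{0\}$. The paper's Liouville step tacitly uses $m(G)=\infty$ and that $L^{p'}$ functions attain their extrema. That is fine when $\inf m>0$, as on Vicsek graphs, but not for every graph.

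One step you assert without justification is $\Delta^{1-\gamma}f\in\overline{\mathrm{Ran}\,\Delta}$. For $\gamma<1$ it holds: $\Delta^{1-\gamma}=\sum_k c_kP^k$ with $\sum_k c_k=0$ annihilates $\ker\Delta$ (where $P$ acts as the identity) and commutes with $\Delta$. The ergodic decomposition then gives $\mathrm{Ran}\,\Delta^{1-\gamma}\subseteq\overline{\mathrm{Ran}\,\Delta}$.

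At $\gamma=1$ there is a genuine gap. Here $\Delta^0 f=f$, and your argument only yields $\mathrm{dist}_{L^{p'}}(f,\ker_{L^{p'}}\Delta)\lesssim\||\nabla f|\|_{p'}$. To conclude you need $\ker_{L^{p'}}\Delta=\{0\}$, which is precisely the paper's step (a). Your $(\Delta+\varepsilon)^{-\gamma}$ fallback hits the same wall, since $\Delta(\Delta+\varepsilon)^{-1}$ tends to the identity only on $\overline{\mathrm{Ran}\,\Delta}$.

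Some such hypothesis is unavoidable. Take $\mathbb N$ with $\mu_{n,n+1}=2^{-n}$, so that $m(G)<\infty$. Then (RR$_{p',0}$) fails: for $f=\1_{[0,R]}$ one has $\|f\|_{p'}\to m(G)^{1/p'}$ while $\||\nabla f|\|_{p'}\approx 2^{-R/p'}$. On the other hand, (R$_{p,1}$) holds there, by a discrete Hardy inequality applied to the flux $\mu_{n,n+1}(f(n)-f(n+1))=-\sum_{k>n}m(k)\Delta f(k)$. Since the paper only uses the lemma for $\gamma\in(0,1)$, your version covers everything it needs.
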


\bp
This result is classical, and analogue versions have already been proven in other settings, like for instance in cable systems (see \cite[Lemma 1.4]{DR24}). We prove it for completeness.

\medskip

\noindent {\bf (a) $\Delta^\gamma$ has dense range in $L^p$.} Since $\gamma \in [0,1]$ and $\Delta$ is bounded, $\Delta^\gamma$ has dense range if $\Delta$ has also dense range. Let us prove the later.

If by contradiction, $\Delta$ does not have a dense range in $L^p$, then by the Hahn-Banach theorem, there exists $g \in L^{p'}(G) \setminus \{0\}$ such that 
\[ \left< g, \Delta f \right> = \sum_{z\in G} g(z) \Delta f(z) m(z) = 0 \qquad \text{ for all } f\in L^p.\]
So by the symmetry of ($P$ and) $\Delta$
\[ \left< \Delta g, f \right>  = \left<g,  \Delta f \right>  = 0 \qquad \text{ for all } f\in L^p,\]
meaning that $\Delta g = 0$. Since $g\in L^{p'}(G)$, it has a global maximum, and if $x$ is such that $g(x) = \max_G g$, we have 
\[0 = \Delta g(x) = \sum_{y\sim x} \underbrace{[g(x) - g(y)]}_{\geq 0} p(x,y) m(y), \]
meaning that $g(y) = g(x) = \max_G g$ for all the neighbors $y$ of $x$. Since $G$ is connected, it means that $g$ is constant equal to $\max_G g$, and since $g\in L^{p'}$, it means that $g\equiv 0$. A contradiction with our choice of $g$.

\medskip

\noindent {\bf (b) Duality.} Since $\Delta^\gamma$ has dense range, 
\begin{multline*}
 \|\Delta^{1-\gamma} f\|_{L^{p'}(G)} = \sup_{g \in L^p(G) \atop \|\Delta^\gamma g\|_p \leq 1} \sum_{z\in G} \Delta^{1-\gamma} f(z) \, \Delta^\gamma g(z) m(z) =  \sup_{g \in L^p(G) \atop \|\Delta^\gamma g\|_p \leq 1} \sum_{z\in G} [\nabla^* \nabla f](z) \,  g(z) m(z) \\
 =  \sup_{g \in L^p(G) \atop \|\Delta^\gamma g\|_p \leq 1} \sum_{e\in E_G} \nabla f(e) \, \nabla g(e) \mu(e)
 \end{multline*}
 by \eqref{dd*duals}. We then continue as
\begin{multline*}
 \|\Delta^{1-\gamma} f\|_{L^{p'}(G)} \leq \sup_{g \in L^p(G) \atop \|\Delta^\gamma g\|_p \leq 1} \sum_{z\in G} |\nabla f|(z) \, |\nabla g|(z) m(z) \leq \sup_{g \in L^p(G) \atop \|\Delta^\gamma g\|_p \leq 1} \| |\nabla f|\|_{L^{p'}(G)} \||\nabla g|\|_{L^p(G)} \\
 \lesssim \sup_{g \in L^p(G) \atop \|\Delta^\gamma g\|_p \leq 1}  \| |\nabla f|\|_{L^{p'}(G)} \|\Delta^\gamma g\|_{L^p(G)} \leq \| |\nabla f|\|_{L^{p'}(G)}
 \end{multline*}
 by \eqref{Rpggbis}. The constant in the inequality above being independent of $f$, the lemma follows.

\ep

\subsection{Conditions on graphs}  \label{Sscondgraphs}

\begin{definition}
We say that a graph $(G,\mu)$ is {\bf doubling} - or satisfies \eqref{D} - if there exists $C>0$ such that for any $x\in G$ and any $r>0$,
\begin{equation} \tag{D} \label{D} V(x,2r) \leq C V(x,r).
\end{equation}
\end{definition}

There are few consequences of the doubling property that is worthy of comments. First, a doubling graph is uniformly locally finite, meaning that the numbers of neighbors of a vertex is (finite and) uniformly bounded. Second, doubling graphs have an upper bound in the dimension, in the sense that if $(G,\mu)$ is doubling, then there exists $C>0$ and $D_G>0$ such that
\begin{equation} \label{defDimension}
V(x,\lambda r) \leq C \lambda^{D_G} V(x,r) \qquad \text{ for } x\in G, \, \lambda \geq 1, \, r>0.
\end{equation}

A stronger variant of the doubling property is the Ahlfors regularity. We say that a graph is $D$-Ahlfors regular - or satisfies \ref{AR} - if there exists $C>0$ such that 
\begin{equation} \tag*{\normalfont (AR)$_D$} \label{AR}
C^{-1} (1+r)^D \leq V(x,r) \leq C(1+r)^D \qquad \text{ for } x\in G, \, r > 0.
\end{equation}

\begin{definition}
We say that $(G,\mu)$ satisfies \eqref{LB} if $(x,x)\in E_G$ for all $x\in G$ and there exists $\epsilon$ such that 
\begin{equation} \tag{LB} \label{LB}
\mu_{xy} \geq \epsilon m(x) \qquad \text{ for } (x,y)\in E_G.
\end{equation}
\end{definition}

The condition \eqref{LB} guarantees that the transition probability between two neighbor points cannot be too small, with a uniform constant. As for \eqref{D}, it implies that the graph is uniformly locally finite. The fact that $\mu_{xx}\geq \epsilon m(x)$, i.e. the fact that the random walk can stay on the same vertex, implies that $-1$ is not in the ($L^2$) spectrum of $P$, which in turn implies 
\begin{equation} \label{I+Pinvert}
\text{ The operator $(I+P)$ is invertible in $L^p(G)$ for all $p\in (1,\infty)$;}
\end{equation}
and the discrete semigroup $\{P^k\}_{k\in \N}$ is analytic, i.e. for any $\gamma \geq 0$ and any $p\in (1,\infty)$, there exists $C_{p,\gamma}$ such that
 \begin{equation} \label{Analyticity}
\|\Delta^\gamma P^{k-1} f \|_{L^p(G)} \leq \frac{C_{p,\gamma}}{k^\gamma} \|f\|_{L^p(G)} \qquad \text{ for } f\in L^p(G) \text{ and } k \in \N^*.
\end{equation}
Details for those implications, as well as characterizations of analyticity on graphs, can be found in \cite{FenLB,CSC90} and the reference therein.

\begin{definition}
Let $\beta \geq 2$. We say that the graph $(G,\mu)$ satisfies \ref{UE} if there exists $C,c>0$ such that
\begin{equation} \tag*{\normalfont (UE)$_{\beta}$} \label{UE}
p_{k-1}(x,y) \leq \frac{C}{V(x,k^{1/\beta})} \exp\left(- c \Big[\frac{\delta(x,y)^\beta}{k}\Big]^{\frac1{\beta-1}}  \right) \qquad \text{ for } (x,y)\in G^2, \, k\in \N^*.
\end{equation}
\end{definition}

Despite the complicated expression, the bound \ref{UE} and its complement 
\begin{equation} \tag*{\normalfont (LUE)$_{\beta}$} \label{LUE}
p_{k-1}(x,y) \geq \frac{c}{V(x,k^{1/\beta})} \exp\left(- C \Big[\frac{\delta(x,y)^\beta}{k}\Big]^{\frac1{\beta-1}}  \right) \qquad \text{ for } (x,y)\in G^2, \, k\in \N^*,
\end{equation}
 comes naturally for graphs with a self-similar fractal structure and a diffusion speed of order $k^{1/\beta}$, see for instance \cite{BP88,Kig89,BB92}, and \cite{GT01} for a characterization of \ref{UE} + \ref{LUE}. Notably, for any $D\geq 1$ and any $\beta \in [2,D+1]$, Barlow constructs in \cite{Barlow} an infinite graph that satisfies \ref{AR}, \ref{UE} and \ref{LUE}. 
 
 \smallskip
 
 The sub-Gaussian bounds \ref{UE} on the Markow kernel pass on the ``discrete time derivative'' of the kernel, as the following proposition shows.
 
 \begin{proposition}
 Let $(G,\mu)$ be a graph satisfying \eqref{D}, \eqref{LB}, and \ref{UE} for an $\beta \geq 2$. Then there exists $C,c>0$ such that
\begin{equation} \label{dkpk}
|\partial_k p_{k-1}(x,y)| \leq \frac{C}{k V(x,k^{1/\beta})} \exp\left(- c \Big[\frac{\delta(x,y)^\beta}{k}\Big]^{\frac1{\beta-1}}  \right) \qquad \text{ for } (x,y)\in G^2, \, k\in \N^*,
\end{equation}
where $\partial_k p_{k-1}(x,y) := p_{k}(x,y) - p_{k-1}(x,y) = \Delta[p_{k-1}(.,y)](x)$.
\end{proposition}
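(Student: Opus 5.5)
The plan is the standard route for discrete time derivatives of heat kernels: combine the analyticity of the semigroup with a weighted $L^2$ (Davies--Gaffney type) argument, and then upgrade to a pointwise bound via the semigroup property. We write $\partial_k p_{k-1}(x,y) = \Delta[p_{k-1}(\cdot,y)](x)$. For $k\geq 2$ we split $k-1 = j + l$ with $j \approx l \approx k/2$, and use the semigroup identity
\[
\partial_k p_{k-1}(x,y) = \sum_{z\in G} \big[\Delta p_{j}(\cdot,z)\big](x)\, p_{l}(z,y)\, m(z).
\]
For $k=1$ the bound is trivial. Indeed, \eqref{LB} and \eqref{D} give $p_0,p_1\lesssim 1/m(x)\approx 1/V(x,1)$, and both kernels vanish when $\delta(x,y)>1$, so the exponential factor costs nothing.

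\textbf{Step 1: weighted $L^2$ bound for $p_l(\cdot,y)$.} From \ref{UE} and \eqref{D}, summing over annuli, we obtain for some small $c'>0$
\[
\sum_{z} |p_l(z,y)|^2 e^{2c'\phi(z,y,l)} m(z) \lesssim \frac{1}{V(y,l^{1/\beta})},
\]
where $\phi(z,y,l) = [\delta(z,y)^\beta/l]^{1/(\beta-1)}$.

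\textbf{Step 2: weighted $L^2$ bound for $\Delta p_j(x,\cdot)$.} This is the main obstacle. We need
\[
\sum_z |\Delta_x p_j(x,z)|^2 e^{2c'\phi(x,z,j)}m(z) \lesssim \frac{1}{j^2 V(x,j^{1/\beta})}.
\]
Unweighted, this follows from analyticity \eqref{Analyticity} applied via $L^2$ duality to $p_{j/2}(x,\cdot)$. For the weighted version I would follow the Davies/Coulhon--Sikora approach. One route uses that analyticity extends the kernel to complex times in a sector, $\zeta\mapsto P^{\zeta}$, in the continuous-time version $e^{-t\Delta}$. The discrete-versus-continuous comparison is available through \eqref{LB}, see \cite{FenLB,CSC90}. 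Sub-Gaussian bounds for real times then propagate to the sector by a Phragmén--Lindelöf argument, and a Cauchy formula in $t$ gives the time derivative.

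The alternative route, which I would try first in the discrete setting, is more elementary. Write $\Delta P^{j} = \sum_{i} P^{j-i}(\Delta P^{i})$ localized on dyadic annuli around $x$. Use that $\Delta P^{i}$ has $L^2\to L^2$ norm $\lesssim 1/i$ by \eqref{Analyticity}, combined with off-diagonal $L^2$ estimates for $P^i$ on annuli, which follow from \ref{UE} and Step 1. Then sum the Gaussian-type tails, losing a constant in $c$.

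\textbf{Step 3: conclusion.} Cauchy--Schwarz in $z$, together with the triangle-type inequality $\phi(x,y,k)\lesssim \phi(x,z,j)+\phi(z,y,l)$ (valid since $j,l\approx k$ and $s\mapsto s^{\beta/(\beta-1)}$ is convex up to constants), gives
\[
|\partial_k p_{k-1}(x,y)| \lesssim \frac{e^{-c\phi(x,y,k)}}{k\, V(x,k^{1/\beta})^{1/2} V(y,k^{1/\beta})^{1/2}}.
\]
Finally, \eqref{D} in the form $V(x,r)\lesssim (1+\delta(x,y)/r)^{D_G}V(y,r)$ lets us replace $V(y,\cdot)$ by $V(x,\cdot)$; the resulting polynomial factor is absorbed by shrinking $c$. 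This yields \eqref{dkpk}.
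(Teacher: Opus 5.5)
\textbf{There is a genuine gap: Step~2 is left unproved, and it is equivalent to the proposition itself.} The paper gives no argument of its own here; it simply refers to \cite{Dungey09}. Your reduction is correct as far as it goes: the case $k=1$, the factorization $\partial_k p_{k-1}(x,y)=\sum_z [\Delta p_j(\cdot,z)](x)\,p_l(z,y)\,m(z)$, Step~1, the Cauchy--Schwarz step with the quasi-triangle inequality for $\phi$, and the final exchange of $V(y,k^{1/\beta})$ for $V(x,k^{1/\beta})$ are all fine. But Step~2 is not an intermediate lemma. Summing the claimed pointwise bound over annuli, exactly as in Step~1, gives the weighted estimate of Step~2, and your Step~3 gives the converse. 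So the whole content of the statement sits in the step you leave open: getting the factor $1/j$ and the sub-Gaussian weight \emph{at the same time}. Each is easy separately. The trivial bound $|\partial_k p_{k-1}|\le p_k+p_{k-1}$ has the decay, and analyticity gives the $1/k$. H\"older-interpolating the two loses a power of $k$, however. Davies-type exponential-weight perturbation only produces Gaussian decay $e^{-c\delta^2/k}$, which for $\beta>2$ is strictly weaker than what is claimed.

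Neither of your two routes supplies this.

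\emph{Route~2.} The displayed decomposition is not an identity, since each $P^{j-i}\Delta P^i$ already equals $\Delta P^j$. In a factorization $\Delta P^j=P^{j-i}\Delta P^i$, the bound $\|\Delta P^i\|_{2\to 2}\lesssim 1/i$ only helps on the piece where $P^{j-i}$ carries the mass to the far annulus. On the piece where $\Delta P^i$ itself must carry mass from $x$ to that region, you need precisely the off-diagonal estimate with the $1/i$ gain that you are trying to prove. Naive bootstrapping through $\Delta P^{2j}=P^j\Delta P^j$ gives $1/j$ rather than $1/(2j)$, so the constant roughly doubles at each dyadic step and the gain is lost.

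\emph{Route~1.} This is the standard continuous-time argument, but two ingredients are missing.
First, the kernel $h_t$ of $e^{-t\Delta}$ does not satisfy the sub-Gaussian bound on the real axis. By \eqref{LB}, following a geodesic gives $h_t(x,y)m(y)\ge e^{-t}(\epsilon t)^d/d!$ with $d=\delta(x,y)$. This Poisson tail is far larger than $e^{-c(d^\beta/t)^{1/(\beta-1)}}$ when $d\gg t$, so the Phragm\'en--Lindel\"of input would have to be reformulated.
Second, and more seriously, a bound $|\partial_t h_t(x,y)|\lesssim \frac{1}{tV(x,t^{1/\beta})}e^{-c\phi}$ does not transfer to the discrete differences $(I-P)P^{k-1}$. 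The references \cite{FenLB,CSC90} compare analyticity of $\{P^k\}$ and $\{e^{-t\Delta}\}$ in operator norm only. They say nothing about kernels with off-diagonal decay.

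What is missing is a genuinely discrete mechanism that produces both features at once. One option is a Phragm\'en--Lindel\"of plus Cauchy-formula argument run directly on a discrete analytic family containing $P^k$. Another is a discrete version of Grigor'yan's weighted-energy method for time derivatives. This is exactly the part that the paper delegates to \cite{Dungey09}, and your proposal does not provide it.
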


The proof of the above result can be found in \cite{Dungey09}, for instance.

\medskip

We finish the paragraph with a scaled Poincar\'e inequality on balls.

\begin{definition}
Let $q\in [1,\infty]$ and $s \geq 1$. We say that the graph $(G,\mu)$ satisfies the $L^q$ Poincaré inequality \ref{Pq} if there exists $C>0$ such that, for any $x\in G$, any $r>0$, and any function $u$ on $G$
\begin{equation} \tag*{\normalfont (P$_q$)$_{s}$} \label{Pq}
\left( \sum_{z\in B(x,r)} \big| u(z) - u_{B(x,r)} \big|^q m(z)  \right)^\frac1q \leq C r^s \left( \sum_{z\in B(x,r)} \big| \nabla u \big|^q(z) m(z)  \right)^\frac1q,
\end{equation}
where $u_{B(x,r)} := \frac1{V(x,r)} \sum_{z\in B(x,r)} u(z) m(z)$.
\end{definition}

\subsection{Vicsek graphs} \label{SsVicsek}

We need to recall the definition of tree graphs. Morally, they are the graphs with no ``loops''. They are introduced via the following definition.

\begin{definition}
A graph $(G,\mu)$ is a tree if the following holds: any closed path of length $N\geq 3$ has a repeating element. It means that for any $N\geq 3$ and any $(N+1)$-uple $(x_0,\dots,x_N)\in G^{N+1}$ satisfying $x_0=x_N$ and $(x_{i-1},x_i) \in E_G$ for all $i\in \{1,\dots,N\}$, there exists $1\leq i < j \leq N$ such that $x_i = x_j$. 
\end{definition}

When a graph is a tree, then, for any couple $(x,y)\in G^2$, there is a unique path joining $x$ to $y$ without any repeating element, and the length of this path is $\delta(x,y)$. Now, we are ready for our definition of Vicsek graphs.

\begin{definition}
For $D \geq 1$, we say that the graph $(G,\mu)$ is a $D$-dimensional Vicsek graph if $(G,\mu)$ is a tree that  satisfies \ref{AR}, \eqref{LB}, and {\normalfont (UE)}$_{D+1}$.
\end{definition}

We can see from \cite{Barlow} that Vicsek graphs are extreme type of graphs, as they have the slowest possible diffusion for a given growth rate. Let us point out that in the usual literature, Vicsek graphs have specific self-similar structures and also satisfy (LUE)$_{D+1}$. In our article, we decided to extend the definition a little bit\footnote{It is not clear to the author whether (LUE)$_{D+1}$ is a consequence of \ref{AR}, \eqref{LB}, and (UE)$_{D+1}$, or not.}, to highlight what we really need.  An example of Vicsek graph was given as Figure \ref{fig1} in the introduction.

\section{Tent spaces} \label{S3}

\subsection{Definitions and first properties.}

\label{Sstents}

In this section, $(G,\mu)$ is a graph satisfying \eqref{D}, and $\beta \geq 1$. In the rest of the paragraph, $\mathfrak f$, $\mathfrak g$ or $\mathfrak h$ denote (ordered) countable collections, and we write respectively $f_k$, $g_k$ or $h_k$ for the $k^{th}$ function of the collection.

\medskip

We recall the definition of the area and Carleson functionals.

\begin{definition}
The cone with vertex at $x\in G$ is
\[\Gamma(x) := \{(y,k) \in G\times \N^*| \, \delta^\beta(x,y) < k\}.\]
The Carleson region over $O \subset \Gamma$ is
\[\hat O := \{(y,k) \in G \times \N^*| \, \forall z\in O^c,  \delta^\beta(y,z) \geq k\}.\]
%$F\subset G$ is
%$$\check F = \bigcup_{x\in F} \gamma(x)$$
We then define the functionals $\mathcal A$ and $\mathcal C$ mapping collections of functions on $G$ to functions on $G$ by
\[\mathcal A( \mathfrak f)(x) := \left( \sum_{(y,k) \in G(x)} \frac{m(y)}{kV(x,k^{1/\beta})} |f_k(y)|^2 \right)^\frac{1}{2}\]
and
\[\mathcal C( \mathfrak f)(x)= \sup_{x\in B} \left(\frac{1}{V(B)} \sum_{(y,k)\in \hat B}\frac{m(y)}{k}|f_k(y)|^2 \right)^\frac{1}{2}.\]

For any $p\in [1,+\infty)$, the tent space $T^p(G)$ denotes the space of collections $\mathfrak f$ such that $\mathcal A(\mathfrak f) \in L^p(G)$. They are a Banach space when equipped with the norm $\|.\|_{T^p(G)}:= \|\mathcal A(.)\|_{L^p(G)}$. The tent space $T^\infty(G)$ is the space of collections $\mathfrak f$ such that $\mathcal C \mathfrak f \in L^\infty(G)$, and is equipped with the norm $\|.\|_{T^\infty(G)}:= \|\mathcal C(.)\|_{L^p(G)}$.
\end{definition}

The first result that we need is the fact that changing the aperture of the cones $\Gamma(x)$ will not change the tents spaces $T^{p}(G)$.

\begin{lemma} \label{lemonA}
For $i \in \N$, define 
\[\Gamma_{[i]}(x) := \{(y,k) \in G\times \N^*| \, \delta^\beta(x,y) < 2^ik\}\]
when $x\in G$, and then
\[\mathcal A_{[i]}( \mathfrak f)(x) := \left( \sum_{(y,k) \in \Gamma_{[i]}(x)} \frac{m(y)}{kV(x,k^{1/\beta})} |f_k(y)|^2 \right)^\frac{1}{2}\]
when $\mathfrak f$ is a collection of functions on $G$ and $x\in G$.

\smallskip

For all $p\in (0,\infty)$, there exists $C>0$ such that for any collection $\mathfrak f$, 
\[ \|\mathcal A_{[i]}( \mathfrak f)(x)\|_{L^p(G)} \leq C^i \|\mathcal A( \mathfrak f)(x)\|_{L^p(G)}.\]
\end{lemma}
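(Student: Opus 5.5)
Throughout, the collection $\mathfrak f$ is treated as a function on $G\times\N^*$. The plan is to reduce the change of aperture to a fixed factor: show that $\|\mathcal A_{[1]}\mathfrak f\|_p\le C\|\mathcal A\mathfrak f\|_p$, with $C$ independent of the scale. Since the normalisation $V(x,k^{1/\beta})$ is the same in every $\mathcal A_{[i]}$, the step from $\Gamma_{[i-1]}$ to $\Gamma_{[i]}$ is the same comparison with cones of aperture $\alpha$ and $2^{1/\beta}\alpha$, where $\alpha=2^{(i-1)/\beta}$. I will therefore prove the general statement
\[\|\mathcal A^{\lambda\alpha}\mathfrak f\|_p\le C\|\mathcal A^{\alpha}\mathfrak f\|_p,\qquad \lambda=2^{1/\beta},\]
with $C$ independent of $\alpha\ge1$. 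Iterating it $i$ times gives the bound $C^i$.

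\textbf{Case $p\le2$.} I will argue directly and obtain a constant that is polynomial in $2^i$, which is at most $C^i$. Fix $(y,k)$ and set $t=k^{1/\beta}$. Then $(y,k)\in\Gamma_{[i]}(x)$ iff $x\in B(y,2^{i/\beta}t)$.
\begin{itemize}
\item For $p=2$, Fubini gives
\[\|\mathcal A_{[i]}\mathfrak f\|_2^2=\sum_{(y,k)}\frac{m(y)|f_k(y)|^2}{k}\sum_{x\in B(y,2^{i/\beta}t)}\frac{m(x)}{V(x,t)}.\]
By doubling, $V(x,t)\gtrsim 2^{-iD_G/\beta}\,V(y,2^{i/\beta}t)$ for such $x$, using \eqref{defDimension}. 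Hence the inner sum is at most $C2^{iD_G/\beta}$, and the same computation for $\mathcal A$ gives the inner sum $\gtrsim1$. This yields the claim.
\item For $p<2$, I will use the standard atomic decomposition or good-$\lambda$ route. Let $O=\{\mathcal M\1_{\{\mathcal A\mathfrak f>\lambda\}}>c2^{-iD_G/\beta}\}$, where $\mathcal M$ is the Hardy--Littlewood maximal operator.
\begin{itemize}
\item If $x\notin O$ and $(y,k)\in\Gamma_{[i]}(x)$, then $B(y,t)$ meets $\{\mathcal A\mathfrak f\le\lambda\}$ in a set of proportional measure. This lets one bound $\int_{O^c}\mathcal A_{[i]}\mathfrak f^2$ by $2^{iD_G/\beta}\int_{\{\mathcal A\mathfrak f\le\lambda\}}\mathcal A\mathfrak f^2$, using the same Fubini computation.
\item By the weak $(1,1)$ bound for $\mathcal M$ under \eqref{D}, $m(O)\lesssim 2^{iD_G/\beta}m(\{\mathcal A\mathfrak f>\lambda\})$.
\end{itemize}
Integrating $p\lambda^{p-1}d\lambda$ then gives the bound with constant $C^i$.
\end{itemize}

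\textbf{Case $p>2$.} Here I will use duality of tent spaces through the Carleson functional. Write $\|\mathcal A_{[i]}\mathfrak f\|_p^2=\|(\mathcal A_{[i]}\mathfrak f)^2\|_{p/2}$ and test against $g\ge0$ with $\|g\|_{(p/2)'}\le1$. Fubini gives
\[\sum_x\mathcal A_{[i]}\mathfrak f(x)^2g(x)m(x)=\sum_{(y,k)}\frac{m(y)|f_k(y)|^2}{k}\,\frac{1}{V}\sum_{B(y,2^{i/\beta}t)}g\,m .\]
Here $V$ denotes the volume normalisation of the ball. The last average is bounded by $2^{iD_G/\beta}\mathcal Mg(z)$ for every $z\in B(y,t)$. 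So the whole sum is at most
\[C2^{iD_G/\beta}\sum_z\mathcal A\mathfrak f(z)^2\,\mathcal Mg(z)\,m(z).\]
H\"older and the $L^{(p/2)'}$-boundedness of $\mathcal M$ (valid since $(p/2)'<\infty$) conclude.

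\textbf{Main obstacle.} The delicate step is the $p<2$ case: the geometric claim that points of $\Gamma_{[i]}(x)$ with $x\notin O$ lie in the cones of a large set of points with $\mathcal A\mathfrak f\le\lambda$. This requires comparing $B(y,t)$ with $B(x,2^{i/\beta}t)$ via doubling, while keeping the exponential dependence on $i$ explicit and the radii in the correct $k^{1/\beta}$ scaling. The same argument also covers $p\in(0,1]$, since only weak-type bounds for $\mathcal M$ and the Fubini step at $L^2$ are used.
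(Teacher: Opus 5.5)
Your argument is correct, but it is organised differently from the paper's. The paper quotes the Coifman--Meyer--Stein good-$\lambda$ argument for only one fixed change of aperture ($\mathcal A_{[2]}$ against $\mathcal A_{[0]}$). It then passes to larger apertures by reindexing the collection, $\tilde f_k=f_{\lfloor k/2^i\rfloor}$, and iterating.

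You never iterate. For each $i$ you compare $\mathcal A_{[i]}$ with $\mathcal A$ directly, tracking the aperture through doubling:
\begin{itemize}
\item Fubini at $p=2$;
\item the good-$\lambda$ argument with threshold $c\,2^{-iD_G/\beta}$ for $p<2$, which also covers $p\le 1$;
\item duality against $\mathcal M g$, with $g\in L^{(p/2)'}$, for $p>2$.
\end{itemize}
This costs a few more lines than a citation. In return it gives the explicit bound $\|\mathcal A_{[i]}\mathfrak f\|_{L^p}\lesssim 2^{iD_G/(\beta\min(p,2))}\|\mathcal A\mathfrak f\|_{L^p}$, which is at most $C^i$. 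It also avoids the reindexing step, which is delicate: when only (D) is assumed, the normalisation $V(x,k^{1/\beta})$ does not transform exactly under $k\mapsto 2^ik$.

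You should, however, delete your opening reduction. Your case analysis never uses it, and the one-step bound with a constant independent of $\alpha$ is false on general doubling graphs. Here is a counterexample.
\begin{itemize}
\item Attach to $\mathbb Z^2$ (unit weights) a half-line $p_1,p_2,\dots$ at the origin, with $\mu_{p_np_{n+1}}=n$.
\item One checks that $V(p_d,r)\approx\max(r,1)\max(r,d)$, and that $V(z,r)\approx\max(r,1)^2$ for $z\in\mathbb Z^2$. So the graph is doubling.
\item Take $f_1=\1_{\{p_d\}}$ and $f_k=0$ for $k\ge 2$. Then $\|\mathcal A_{[i]}\mathfrak f\|_{L^2}^2=m(p_d)\,\#B(p_d,2^{i/\beta})$.
\item Choose $d=\lceil 2^{i/\beta}\rceil$. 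The ball $B(p_d,2^{i/\beta})$ contains $O(d)$ points of the half-line and no lattice points, while $B(p_d,2^{(i+1)/\beta})$ contains $\gtrsim d^2$ lattice points.
\end{itemize}
Hence the ratio $\|\mathcal A_{[i+1]}\mathfrak f\|_{L^2}/\|\mathcal A_{[i]}\mathfrak f\|_{L^2}$ is of order $d^{1/2}$, which is unbounded in $i$.

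So under (D) alone, only growth polynomial in the aperture is available, which is exactly what your direct estimates give. An $i$-independent step constant, which any reindex-and-iterate scheme (yours as announced, or the paper's) needs, is not available.
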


\bp
The complicated part is to prove that 
\begin{equation} \label{equivA}
\|\mathcal A_{[2]}( \mathfrak f)(x)\|_{L^p(G)} \leq C_1 \|\mathcal A_{[0]}( \mathfrak f)(x)\|_{L^p(G)}.
\end{equation}
The argument relies on a good lambda argument which is now classical. The proof in the Euclidean case is found as Proposition 4 in \cite{CMS85}, but the proof only relies on the doubling property of $G$.

\medskip

Then, given $i\in \N$, we define $\tilde {\frak f} = \{\tilde f_k\}_{k\in \N^*}$ as $\tilde f_k := f_{\lfloor k/2^i\rfloor}$, with the convention that $f_0 \equiv 0$. Then we have, using \eqref{equivA},
\[ \|\mathcal A_{[i+2]}( \mathfrak f)(x)\|_{L^p(G)} \leq 2^{i/2} \|\mathcal A_{[2]}(\tilde{ \mathfrak f})(x)\|_{L^p(G)} \leq C_1 2^{i/2}  \|\mathcal A_{[0]}(\tilde{ \mathfrak f})(x)\|_{L^p(G)} \leq C_1 \|\mathcal A_{[i+1]}(\tilde{ \mathfrak f})(x)\|_{L^p(G)}.\]
The lemma follows.
\ep

\begin{theorem} \label{theoTent}
We have the following results on tent spaces:
\begin{itemize}
\item{\bf Carleson inequality.} There exists $C>0$ such that, for any collections $\mathfrak f$ and $\mathfrak g$ of functions on $G$ satisfying
  \[\mathcal A(\mathfrak f)(y) + \mathcal C(\mathfrak g)(y) < +\infty \qquad \text{ for any } y\in G,\]
we have
\[\sum_{(y,k)\in G \times \N^*} \frac{m(y)}{k} |f_k(y)g_k(y)| \lesssim \sum_{y\in G} \mathcal A(\mathfrak f)(y) \mathcal C(\mathfrak g)(y) \, m(y).\]

\smallskip

\item {\bf Duality.} If $p\in [1,\infty)$, the dual space of $T^p(G)$ for the bracket 
\[ \left< \mathfrak f, \mathfrak g \right> := \sum_{(y,k)\in G\times \N^*} f_k(y) g_k(y) \frac{m(y)}{k}\]
is $T^{p'}(G)$. In particular, 
\[ \|\mathcal A(\mathfrak f)\|_{L^p(G)}  \approx \sup_{\|\mathcal A(\frak g)\|_{p'} \leq 1} \sum_{(y,k)\in G\times \N^*} f_k(y) g_k(y) \frac{m(y)}{k}\]
when $p\in (1,\infty)$, and
\[ \|\mathcal A(\mathfrak f)\|_{L^1(G)}  \approx \sup_{ \|\mathcal C(\frak g)\|_{\infty} \leq 1} \sum_{(y,k)\in G\times \N^*} f_k(y) g_k(y) \frac{m(y)}{k},\]
In both cases, we can restrict the supremums to collections of compactly supported functions for which $f_k \equiv 0$ for a large enough $k$.

\smallskip

\item {\bf Equivalence between $\mathcal A$ and $\mathcal C$.} When $p\in (2,\infty)$, for any collection of functions $\mathfrak f$, we have
\[ \|\mathcal A(\mathfrak f)\|_{L^p(G)}  \approx  \|\mathcal C(\mathfrak f)\|_{L^p(G)},\]
where the constants are independent of $\mathfrak f$.
\end{itemize}
\end{theorem}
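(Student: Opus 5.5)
This is Coifman–Meyer–Stein tent space theory \cite{CMS85} transplanted to $G\times\N^*$ with parabolic scaling $\delta^\beta(x,y)<k$. The only geometric input is the doubling property \eqref{D}, together with Lemma~\ref{lemonA} on changes of aperture. I will prove the three items in the order stated, because duality uses the Carleson inequality and the $\mathcal A$/$\mathcal C$ equivalence uses duality.

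\textbf{Carleson inequality.} I will use the stopping-time argument of \cite[Theorem 1]{CMS85}. We may assume $\sum_y \mathcal A(\mathfrak f)\mathcal C(\mathfrak g)\,m<\infty$ and argue first for truncated collections, then pass to the limit by monotone convergence.
\begin{enumerate}[(1)]
\item For each $j\in\mathbb Z$, set $O_j:=\{\mathcal C(\mathfrak g)>2^j\}$. Since $\mathcal C(\mathfrak g)$ is a supremum over balls, $O_j$ is open.
\item Define $O_j^*:=\{M\1_{O_j}>1/2\}$, where $M$ is the Hardy--Littlewood maximal function. By \eqref{D}, $m(O_j^*)\lesssim m(O_j)$.
\item Every $(y,k)$ with $g_k(y)\neq 0$ lies in some $\widehat{O_j^*}\setminus\widehat{O_{j+1}^*}$. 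Split the sum $\sum \frac{m(y)}{k}|f_kg_k|$ accordingly.
\item On each piece, apply Cauchy--Schwarz. Use $\sum_{x\in B(y,k^{1/\beta})\setminus O_{j+1}}m(x)\gtrsim V(y,k^{1/\beta})$, which holds by the definition of $O_{j+1}^*$ and \eqref{D}. This converts $\frac{m(y)}{k}|f_k(y)|^2$ into an average of $\mathcal A(\mathfrak f)^2$ over points of $O_{j+1}^c$.
\item Control $\sum_{(y,k)\in\widehat{O_j^*}}\frac{m(y)}{k}|g_k|^2$ by $4^{j}\, m(O_j^*)$. This uses a Whitney-type covering of $O_j^*$ by balls and the definition of $\mathcal C$.
\end{enumerate}
Summing over $j$ gives $\sum_j 2^j \int_{O_j}\mathcal A(\mathfrak f)\,dm\approx\int\mathcal A(\mathfrak f)\,\mathcal C(\mathfrak g)\,dm$.

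\textbf{Duality.} Carleson's inequality with $\mathcal C\mathfrak g\in L^\infty$ gives $T^\infty\hookrightarrow(T^1)^*$. For $1<p<\infty$, I will use the pointwise identity
\[
\sum \frac{m(y)}{k}|f_kg_k|\approx\sum_x\sum_{\Gamma(x)}\frac{m(y)}{kV(x,k^{1/\beta})}|f_kg_k|\,m(x),
\]
which follows from $V(x,k^{1/\beta})\approx V(y,k^{1/\beta})$ and Fubini. Cauchy--Schwarz in the cone and then Hölder yield $T^{p'}\hookrightarrow(T^p)^*$.

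For the converse, view $T^p$ as a closed subspace of $L^p(G;L^2(\Gamma))$ through the map $\mathfrak f\mapsto(x\mapsto f_k(y)\1_{\Gamma(x)}(y,k)\,(kV)^{-1/2})$. By Hahn--Banach, any functional is represented by some $h\in L^{p'}(L^2)$. Averaging $h$ over $x$ produces a collection $\mathfrak g$, and $\|\mathcal A\mathfrak g\|_{p'}\lesssim\|h\|$ follows from $\mathcal A_{[i]}$ bounds (Lemma~\ref{lemonA}) and duality for the vector-valued maximal inequality, as in \cite{CMS85}. The case $p=1$ is treated with atoms: an atomic decomposition of $T^1$ gives the bound on $\mathcal C\mathfrak g$ by testing against atoms supported in $\hat B$. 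The restriction to compactly supported, finitely many $k$, collections follows by density and monotone convergence.

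\textbf{Equivalence between $\mathcal A$ and $\mathcal C$ for $p>2$.} First, $\mathcal C\mathfrak f\lesssim M(\mathcal A(\mathfrak f)^2)^{1/2}$ pointwise. This is because the Carleson box $\hat B$ is covered by the cones over $B$ with comparable normalizations, so
\[
\frac1{V(B)}\sum_{\hat B}\frac{m}{k}|f_k|^2\lesssim\frac1{V(B)}\sum_{x\in B}\mathcal A(\mathfrak f)(x)^2\,m(x).
\]
Since $p/2>1$, the maximal theorem gives $\|\mathcal C\mathfrak f\|_p\lesssim\|\mathcal A\mathfrak f\|_p$. For the converse, write $\|\mathcal A\mathfrak f\|_p^2=\|\mathcal A(\mathfrak f)^2\|_{p/2}$ and test against $\phi\ge0$ in $L^{(p/2)'}$ with norm at most $1$:
\[
\sum_x\mathcal A(\mathfrak f)^2\phi\, m=\sum\frac{m(y)}{k}|f_k(y)|^2\,\Phi_k(y),\qquad \Phi_k(y)=\frac{1}{V(y,k^{1/\beta})}\sum_{x\in B(y,k^{1/\beta})}\phi(x)m(x).
\]
Apply the Carleson inequality to the pair $(|f_k|^2,\,\Phi)$, in its $L^1$-measure form where the roles of $\mathcal A$ and $\mathcal C$ are as in CMS. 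This bounds the expression by $\int\mathcal C(\mathfrak f)^2\,\mathcal N(\Phi)$, with $\mathcal N$ a nontangential maximal function controlled by $M\phi$. Hölder then gives the result.

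The main obstacle I expect is this last step, which needs the Carleson-measure inequality in the form $\sum\frac{m}{k}|F_k|\,\Phi_k\lesssim\int\mathcal C_1(\mathfrak F)\,\mathcal N\Phi$. I will prove it by the same stopping-time on level sets of $\mathcal N\Phi$ as in the first item.
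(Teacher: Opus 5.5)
Your overall route, transplanting Coifman--Meyer--Stein using only \eqref{D}, is the one the paper takes; it simply cites \cite[Theorems 1--3]{CMS85} and \cite[Theorem D.3.8]{FeneuilThesis}.

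\textbf{What works.} Your third item is sound. The pointwise bound $\mathcal C(\mathfrak f)\lesssim \mathcal M(\mathcal A(\mathfrak f)^2)^{1/2}$ is correct. The measure-form embedding $\sum\frac{m(y)}{k}|F_k(y)|\Phi_k(y)\lesssim\sum_x\mathcal C_1(\mathfrak F)(x)\,\mathcal N\Phi(x)\,m(x)$ does follow from level sets of $\mathcal N\Phi$. Indeed $\{\Phi>\lambda\}\subset\widehat{O_\lambda}$ with $O_\lambda=\{\mathcal N\Phi>\lambda\}$, and a Whitney covering of $O_\lambda$ gives $\sum_{\widehat{O_\lambda}}\frac{m}{k}|F_k|\lesssim\sum_{x\in O_\lambda}\mathcal C_1(\mathfrak F)(x)m(x)$.

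\textbf{The gap.} Your proof of the Carleson inequality is not the CMS argument, and it does not close. On the layer $\widehat{O_j^*}\setminus\widehat{O_{j+1}^*}$, your steps (4)--(5) produce
\[
\sum_{(y,k)\in\widehat{O_j^*}\setminus\widehat{O_{j+1}^*}}\frac{m(y)}{k}|f_k(y)g_k(y)|\lesssim 2^j\,m(O_j^*)^{1/2}\Big(\sum_{x\in O_j^*\setminus O_{j+1}}\mathcal A(\mathfrak f)(x)^2\,m(x)\Big)^{1/2}.
\]
This is not bounded by $2^j\sum_{x\in O_j}\mathcal A(\mathfrak f)(x)m(x)$. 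Cauchy--Schwarz runs the other way, and the right-hand side is much larger whenever $\mathcal A(\mathfrak f)$ concentrates on a small part of $O_j^*$. Summing over $j$ therefore gives only an $L^2\times L^2$-type bound, not $\sum_x\mathcal A(\mathfrak f)\,\mathcal C(\mathfrak g)\,m$.

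Applying Cauchy--Schwarz inside each cone instead does not help either. It would require $\sum_{(y,k)\in\Gamma(x)\cap\widehat{O_j^*}}\frac{m(y)}{kV(x,k^{1/\beta})}|g_k(y)|^2\lesssim 4^j$ for $x\notin O_{j+1}$. That is false, because $\mathcal C(\mathfrak g)$ does not control the area integral of $\mathfrak g$ pointwise. For example, placing mass $\approx V(x_0,2^i)$ at $(x_0,\lceil 2^{i\beta}\rceil)$ for every $i$ keeps $\mathcal C(\mathfrak g)$ bounded but makes $\mathcal A(\mathfrak g)(x_0)=\infty$.

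\textbf{The missing idea} is the stopping time of \cite[Theorem 1]{CMS85}. For $h\in\N$ put $\mathcal A(\mathfrak g|h)(x)^2:=\sum_{(y,k)\in\Gamma(x),\,k\le h}\frac{m(y)}{kV(x,k^{1/\beta})}|g_k(y)|^2$, and let $\tau(x):=\sup\{h\in\N:\mathcal A(\mathfrak g|h)(x)\le M\,\mathcal C(\mathfrak g)(x)\}$. Take $B=B(y,k^{1/\beta})$. Every $(y',k')\in\Gamma(x)$ with $x\in B$ and $k'\le k$ lies in $\widehat{3B}$, so
\[
\sum_{x\in B}\mathcal A(\mathfrak g|k)(x)^2m(x)\lesssim\sum_{(y',k')\in\widehat{3B}}\frac{m(y')}{k'}|g_{k'}(y')|^2\le V(3B)\inf_{B}\mathcal C(\mathfrak g)^2 .
\]
Chebyshev with $M$ large then gives $m(\{x\in B:\tau(x)\ge k\})\ge\frac12 V(B)$. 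Inserting this density bound and using Fubini,
\[
\sum_{(y,k)}\frac{m(y)}{k}|f_k(y)g_k(y)|\le 2\sum_{x\in G}m(x)\sum_{(y,k)\in\Gamma(x),\,k\le\tau(x)}\frac{m(y)}{kV(y,k^{1/\beta})}|f_k(y)g_k(y)|.
\]
Cauchy--Schwarz in the cone bounds the right-hand side by $C\sum_x\mathcal A(\mathfrak f)(x)\,\mathcal A(\mathfrak g|\tau(x))(x)\,m(x)\le CM\sum_x\mathcal A(\mathfrak f)\,\mathcal C(\mathfrak g)\,m$.

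\textbf{Knock-on effect.} You derive $T^\infty\hookrightarrow(T^1)^*$ from the Carleson inequality, so your $p=1$ duality needs this repair as well. Alternatively, you could use the atomic decomposition of $T^1$ that you already invoke for the converse direction.
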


\bp
For tents spaces in the Euclidean settings, those results are classical, and can be found as Theorems 1, 2, and 3 in \cite{CMS85}. The proofs of \cite{CMS85} only rely of the doubling structure of the space, and can easily be adapted to our setting. In this particular setting, one can find the proof of the Carleson inequality and of $(T^1(G))^* = T^\infty(G)$ in Theorem D.3.8 of \cite{FeneuilThesis}.
\ep

\medskip

For $j\in \N$ and $q\in [1,\infty]$, let $\mathcal A^q_{(j)}$ and $\mathcal C^q_{(j)}$ be variants of $\mathcal A$ and $\mathcal C$ defined on collections of functions $\mathfrak f = \{f_k\}_{k\in \N^*}$ on $G$ as follows:
\[\mathcal A^q_{(j)} \mathfrak f(x):=  \left(\sum_{(y,k)\in \Gamma(x)}\frac{m(y)}{k V(x,k^{1/\beta})V(y,2^jk^{1/\beta})^\frac2q} \|f_k\|_{L^q(B(y,2^jk^{1/\beta}))}^2 \right)^\frac{1}{2}\]
and
\[\mathcal C^q_{(j)} \mathfrak f(x):= \sup_{r\in \N^*} \left(\frac{1}{V(x,r)} \sum_{(y,k)\in \widehat{B(x,r)}}\frac{m(y)}{k V(y,2^jk^{1/\beta})^{2/q}} \|f_k\|_{L^q(B(y,2^jk^{1/\beta}))}^2 \right)^\frac{1}{2}.\]
Moreover, we write $\mathcal A^p$ and $\C^p$ for $\mathcal A^q_{(0)}$ and $\mathcal C^q_{(0)}$ respectively.

\medskip

Theorem \ref{theoTent} can be adapted to those new functionals.

\begin{corollary} \label{theoTent2}
Let $q\in [1,\infty]$ and $j\in \N$. We have the following results 
\begin{itemize}
\item{\bf Carleson inequality.} There exists $C>0$ such that, for any collections $\mathfrak f$ and $\mathfrak g$ of functions on $G$ satisfying
  \[\mathcal A^q_{(j)}(\mathfrak f)(y) + \mathcal C^{q'}_{(j)}(\mathfrak g)(y) < +\infty \qquad \text{ for any } y\in G,\]
we have
\[\sum_{(y,k)\in G \times \N^*} \frac{m(y)}{k} |f_k(y)g_k(y)| \lesssim \sum_{y\in G} \mathcal A^q_{(j)}(\mathfrak f)(y) \mathcal C^{q'}_{(j)}(\mathfrak g)(y) \, m(y).\]

\smallskip

\item {\bf Duality.} When $p\in (1,\infty)$,
\[ \|\mathcal A^q_{(j)}(\mathfrak f)\|_{L^p(G)}  \approx \sup_{ \|\mathcal A^{q'}_{(j)}(\frak g)\|_{p'} \leq 1} \sum_{(y,k)\in G\times \N^*} f_k(y) g_k(y) \frac{m(y)}{k}\]
and when $p=1$
\[ \|\mathcal A^q_{(j)}(\mathfrak f)\|_{L^1(G)}  \approx \sup_{\|\mathcal C^{q'}_{(j)}(\frak g)\|_{\infty} \leq 1} \sum_{(y,k)\in G\times \N^*} f_k(y) g_k(y) \frac{m(y)}{k}.\]
In both cases, we can restrict the supremums to collections of compactly supported functions for which $f_k \equiv 0$ for a large enough $k$.

\smallskip

\item {\bf Equivalence between $\mathcal A$ and $\mathcal C$.} When $p\in (2,\infty)$, we have  for any collection of functions $\mathfrak f$,
\[ \|\mathcal A^q_{(j)}(\mathfrak f)\|_{L^p(G)}  \approx  \|\mathcal C^q_{(j)}(\mathfrak f)\|_{L^p(G)}.\]
\end{itemize}
Note that in all the theorem, due to our definition of $\mathcal A^q_{(j)}$ and $\mathcal C^q_{(j)}$, the constants can be chosen independent of $q$ and $j$. But this independence will not play a role in our future proofs.
\end{corollary}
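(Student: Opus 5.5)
The plan is to reduce Corollary~\ref{theoTent2} to Theorem~\ref{theoTent} by viewing $\mathcal A^q_{(j)}$ and $\mathcal C^q_{(j)}$ as the ordinary functionals $\mathcal A$ and $\mathcal C$ acting on an enlarged ("vector-valued") collection. Set $r_k := 2^jk^{1/\beta}$. For fixed $(y,k)$, the quantity $V(y,r_k)^{-1/q}\|f_k\|_{L^q(B(y,r_k))}$ is the $L^q$ norm of $f_k$ on $B(y,r_k)$ with respect to the normalized measure $m/V(y,r_k)$. So I would define the scalar collection $F_k(y) := V(y,r_k)^{-1/q}\|f_k\|_{L^q(B(y,r_k))}$. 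Then, by definition, $\mathcal A^q_{(j)}\mathfrak f = \mathcal A(\mathfrak F)$ and $\mathcal C^q_{(j)}\mathfrak f \approx \mathcal C(\mathfrak F)$. The second equivalence holds because the supremum over balls $B(x,r)$ containing $x$ is comparable to the supremum over balls containing $x$, by \eqref{D}.

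\textbf{Equivalence between $\mathcal A$ and $\mathcal C$.} This follows immediately from the third item of Theorem~\ref{theoTent} applied to $\mathfrak F$.

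\textbf{Carleson inequality.} For each $(y,k)$, I would first bound $|f_k(y)g_k(y)|$ by an average over $B(y,r_k)$. Using Hölder on that ball,
\[\frac{1}{V(y,r_k)}\sum_{z\in B(y,r_k)}|f_k(z)g_k(z)|m(z) \le F_k(y)G_k(y),\]
where $G_k$ is built from $\mathfrak g$ with exponent $q'$.

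Applying this directly requires relating the pointwise sum to the averaged sum. I would use Fubini: exchange $y$ and $z$ with $\delta(y,z)<r_k$, and use $V(y,r_k)\approx V(z,r_k)$, which holds by \eqref{D}. This gives
\[\sum_{(y,k)}\frac{m(y)}k|f_kg_k|(y) \lesssim \sum_{(z,k)}\frac{m(z)}k\Big(\frac{1}{V(z,r_k)}\sum_{B(z,r_k)}|f_kg_k|\,m\Big) \le \sum_{(z,k)}\frac{m(z)}kF_k(z)G_k(z).\]
Here the middle step uses the trivial inclusion $y\in B(z,r_k)$. Theorem~\ref{theoTent} applied to $\mathfrak F,\mathfrak G$ then finishes the argument.

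\textbf{Duality.} The upper bound "$\gtrsim$" follows from the Carleson inequality combined with Hölder ($p>1$), or with $L^1$–$L^\infty$ pairing ($p=1$). For the reverse direction, I would take the norming collection $\mathfrak h$ given by Theorem~\ref{theoTent} for $\mathfrak F$, so that $\|\mathcal A(\mathfrak F)\|_p\approx\sum F_kh_k\,m/k$ with $h_k\ge0$. I then need to produce $\mathfrak g$ with $\sum f_kg_k m/k \gtrsim \sum F_kh_k m/k$ and $\|\mathcal A^{q'}_{(j)}\mathfrak g\|_{p'}\lesssim1$.

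\textbf{Main obstacle.} This construction is the key difficulty. I would try $g_k(z) := \sum_{y:\delta(y,z)<r_k} \frac{m(y)h_k(y)}{V(y,r_k)}\,\phi_{k,y}(z)$, where $\phi_{k,y}$ is the $L^{q'}$-normalized dual function of $f_k$ on $B(y,r_k)$. Pairing this with $f_k$ reproduces $\sum F_kh_k m/k$ exactly.

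To control its norm, I would bound $V(z,r_k)^{-1/q'}\|g_k\|_{L^{q'}(B(z,r_k))}$ by Minkowski. This gives a sum of $h_k$ over $B(z,2r_k)$ with normalized weights, which is a local average of $h_k$ at scale $2^{j+1}k^{1/\beta}$. Its $\mathcal A$-norm is controlled via Lemma~\ref{lemonA} (change of aperture) together with Cauchy–Schwarz, which yields $\|\mathcal A^{q'}_{(j)}\mathfrak g\|_{p'}\lesssim C^j\|\mathcal A\mathfrak h\|_{p'}$. The same computation with $\mathcal C$ handles $p=1$.

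Restricting to compactly supported, finitely-many-$k$ collections is inherited from Theorem~\ref{theoTent}, because the construction of $\mathfrak g$ preserves these properties.
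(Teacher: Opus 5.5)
Your proposal is correct and follows essentially the paper's argument. You make the same reduction to Theorem~\ref{theoTent} through $F_k=\bar f_k$, and for duality you build the same test collection $g_k(z)=\sum_{y\in B(z,r_k)}\bar g_k(y)\,h_{y,k}(z)\,m(y)$; your $\phi_{k,y}/V(y,r_k)$ is the paper's $h_{y,k}$. You control it the same way, by local averages, Cauchy--Schwarz and Lemma~\ref{lemonA}. Your Fubini averaging step in the Carleson item is actually a step the paper skips, and it is needed, since $|f_k(y)g_k(y)|$ is not pointwise bounded by $\bar f_k(y)\bar g_k(y)$.

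One small repair concerns the easy direction of duality. Carleson plus H\"older only closes when $p'>2$, because it needs $\|\mathcal C^{q'}_{(j)}\mathfrak g\|_{L^{p'}}\lesssim\|\mathcal A^{q'}_{(j)}\mathfrak g\|_{L^{p'}}$. For $p\ge 2$, instead apply the duality of Theorem~\ref{theoTent} to $(\mathfrak F,\mathfrak G)$ after your averaging step; this in fact works for every $p\in(1,\infty)$.
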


\bp
The first and third point are consequences of Theorem \ref{theoTent} applied to the collections $\bar{\frak f}= \{\bar f_k\}_{k\in \N^*}$ and $\bar{\frak g}= \{\bar g_k\}_{k\in \N^*}$ defined as
\[ \bar f_k(y) := \frac{1}{V(y,2^jk^{1/\beta})^{1/q}} \|f_k\|_{L^q(B(y,2^jk^{1/\beta}))} \ \text{ and } \ \bar g_k(y) := \frac{1}{V(y,2^jk^{1/\beta})^{1/q'}} \|g_k\|_{L^{q'}(B(y,2^jk^{1/\beta}))}.\]

So only the duality is not immediate, but is still a consequence of the duality in Theorem \ref{theoTent}. Let us only do the case $p\in (1,\infty)$ and $j=0$, since the cases $p=1$ and $j\in \N^*$ are similar. If we keep the notation introduced above, 
\[ \|\A^q(\mathfrak f)\|_{L^p(G)} = \|\A(\bar{ \mathfrak f})\|_{L^p(G)} \lesssim \sum_{(y,k)\in G\times \N^*} \bar f_k(y) \bar g_k(y) \frac{m(y)}{k} \]
for a collection $\bar{\frak g} = \{\bar g_k\}_{k\in \N^*}$ satisfying $\|\A(\bar{\frak g})\|_{L^p(G)} \leq 1$. But with the duality on the spaces $L^q(B(y,k^{1/\beta}))$, we also have the existence of $h_{y,k}$ verifying $\|h_{y,k}\|_{L^{q'}(B(y,k^{1/\beta}))} \leq V(y,k^{1/\beta})^{-1/q}$ such that 
\[  \bar f_k(y) = \sum_{z\in B(y,k^{1/\beta})} f_k(z) h_{y,k}(z) m(z).\]
We define $\mathfrak g =\{g_k\}_{k\in \N^*}$ as
\[ g_k(z):= \sum_{y\in B(z,k^{1/\beta})} \bar g_k (y) h_{y,k}(z) m(y).\]
We let the reader check that 
\[  \sum_{(y,k)\in G\times \N^*} \bar f_k(y) \bar g_k(y) \frac{m(y)}{k} =  \sum_{(z,k)\in G\times \N^*} f_k(z) g_k(z) \frac{m(z)}{k}, \]
meaning that 
\[ \|\A^q(\mathfrak f)\|_{L^p(G)} \lesssim \sum_{(z,k)\in G\times \N^*} f_k(z) g_k(z) \frac{m(z)}{k}.\]

So the theorem will be proved once we establish that 
\begin{equation} \label{claimduality}
\|\A^{q'}(\frak g)\|_{L^{p'}(G)} \lesssim 1. 
\end{equation}
To prove the claim, first observe that for any function $\varphi$ on $G$,
\begin{multline*}
\sum_{B(a,k^{1/\beta})} g_k(z) \varphi(z) m(z) = \sum_{z\in B(a,k^{1/\beta})} \varphi(z) m(z) \sum_{y\in B(z,k^{1/\beta})} \bar g_k(y) h_{y,k}(z) m(y) \\
= \sum_{y\in B(a,2k^{1/\beta})}  \bar g_k(y) m(y) \sum_{z\in B(a,k^{1/\beta}) \cap B(y,k^{1/\beta})} \varphi(z) h_{y,k}(z) m(z)
\end{multline*}
So, since $\|h_{y,k}\|_{L^{q'}(B(y,k^{1/\beta}))} \leq V(y,k^{1/\beta})^{-1/q} \lesssim V(a,k^{1/\beta})^{-1/q}$ when $y\in B(a,2k^{1/\beta})$, we deduce that
\begin{multline*}
 \left| \sum_{B(a,k^{1/\beta})} g_k(z) \varphi(z) m(z) \right| \leq \|\varphi\|_{L^q(B(a,k^{1/\beta}))} \sup_{y\in B(a,2k^{1/\beta})}  \|h_{y,k}\|_{L^{q'}(B(a,k^{1/\beta}))} \sum_{y\in B(a,2k^{1/\beta})}  |\bar g_k(y)| m(y) \\
 \leq \|\varphi\|_{L^q(B(a,k^{1/\beta}))} V(a, k^{1/\beta})^{1/q'} \sum_{y\in B(a,2k^{1/\beta})}  |\bar g_k(y)| m(y),
 \end{multline*}
 for all function $\varphi$ on $G$. By duality, we deduce that
\begin{multline*} 
\frac{\|g_k\|_{L^{q'}(B(a,k^{1/\beta}))}}{V(a, k^{1/\beta})^{1/q'}} \lesssim \frac{1}{V(a,k^{1/\beta})} \sum_{y\in B(a,2k^{1/\beta})}  |\bar g_k(y)| m(y) \\ \lesssim \left( \frac{1}{V(a,2k^{1/\beta})} \sum_{y\in B(a,2k^{1/\beta})}  |\bar g_k(y)|^2 m(y) \right)^\frac12.
 \end{multline*}
Using Fubini's lemma gives then that 
\[ \|\A^{q'}(\frak g)\|_{L^{p'}(G)} \lesssim  \|\A_{[2]}(\bar{\frak g})\|_{L^{p'}(G)} \lesssim 1\]
by Lemma \ref{lemonA}.
\ep

\subsection{Equivalence of area functionals}

\begin{lemma}  \label{lemLP2A}
Let $G$ be a doubling graph, $q\in [1,\infty]$, and $p\in (0,\infty)$. Then there exists $C>0$ and $D>0$ such that, for any $j\in \N^*$, any collection of functions $\mathfrak f = \{f_k\}_{k\in \N^*}$, and any $x\in G$, we have
\[   C^{-1} 2^{-jD} \big\|\A^q (\mathfrak f)\big\|_{L^p(G)} \leq  \big\|\A^q_{(j)}(\mathfrak f)\big\|_{L^p(G)} \leq C2^{jD} \big\|\A^q(\mathfrak f)\big\|_{L^p(G)}.\]
\end{lemma}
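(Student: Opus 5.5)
The plan is to prove the two inequalities separately, using only the doubling property \eqref{D} (in the quantitative form \eqref{defDimension}) and the change-of-aperture estimate of Lemma~\ref{lemonA}. Throughout, write $\rho_k:=k^{1/\beta}$.

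\emph{Upper bound.} Fix $(y,k)\in\Gamma(x)$. Cover $B(y,2^j\rho_k)$ by balls $B(z_i,\rho_k)$ with $z_i\in B(y,2^j\rho_k)$ forming a maximal $\rho_k$-separated family. By doubling, the number of such balls is at most $C2^{jD_G}$, and $V(z_i,\rho_k)\geq C^{-1}2^{-jD_G}V(y,2^j\rho_k)$. This gives the pointwise bound
\[
\frac{\|f_k\|_{L^q(B(y,2^j\rho_k))}}{V(y,2^j\rho_k)^{1/q}}\lesssim 2^{jD'}\max_{z\in B(y,2^j\rho_k)}\bar f_k(z),
\qquad
\bar f_k(z):=\frac{\|f_k\|_{L^q(B(z,\rho_k))}}{V(z,\rho_k)^{1/q}}.
\]
For $q=\infty$ the sup-norm version of the same covering works. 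To avoid the maximum, I would instead average: replace $\bar f_k(z_i)$ by an $\ell^2$-average of $\bar f_k$ over $B(z_i,\rho_k)$, which is comparable up to doubling constants. Concretely, $\|f_k\|_{L^q(B(z_i,\rho_k))}\le\|f_k\|_{L^q(B(w,2\rho_k))}$ for every $w\in B(z_i,\rho_k)$, so one can average over $w$ at the cost of passing to radius $2\rho_k$; a fixed radius dilation of this kind is harmless once it is absorbed with the same argument.

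Squaring and summing over $\Gamma(x)$ then gives
\[
\A^q_{(j)}(\mathfrak f)(x)^2\lesssim 2^{2jD'}\sum_{(w,k)\in\Gamma_{[i]}(x)}\frac{m(w)}{kV(x,\rho_k)}\,\tilde f_k(w)^2,
\]
where $\tilde f_k$ denotes the radius-$2\rho_k$ version of $\bar f_k$ and $2^{i/\beta}\approx 2^{j+2}$. Here I use that $\delta(x,w)\le (2^j+2)\rho_k$, so $w\in\Gamma_{[i]}(x)$ with $i\approx\beta(j+2)$. Lemma~\ref{lemonA} then bounds the $L^p$ norm by $C^{i}\|\A(\tilde{\mathfrak f})\|_p = C^{\beta j}\|\A(\tilde{\mathfrak f})\|_p$, which has the form $2^{jD}$ with $D$ enlarged.

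It remains to compare $\tilde f$ (radius $2\rho_k$) with $\bar f$ (radius $\rho_k$). Covering $B(w,2\rho_k)$ by boundedly many balls of radius $\rho_k$ reduces this to the same argument with $j=1$. To avoid circularity, I would do the covering directly at the initial step with balls of radius $\rho_k/2$, so that every needed $w$ satisfies $B(z_i,\rho_k/2)\subset B(w,\rho_k)$ for all $w\in B(z_i,\rho_k/2)$. That removes the issue.

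\emph{Lower bound.} This is easier. For $(y,k)\in\Gamma(x)$ we have $B(y,\rho_k)\subset B(y,2^j\rho_k)$ and $V(y,2^j\rho_k)\le C2^{jD_G}V(y,\rho_k)$. Hence $\bar f_k(y)\le C2^{jD_G/q}\cdot V(y,2^j\rho_k)^{-1/q}\|f_k\|_{L^q(B(y,2^j\rho_k))}$ pointwise, and so $\A^q\le C2^{jD}\A^q_{(j)}$ pointwise. No aperture change is needed here.

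\emph{Main obstacle.} The delicate point is that the constants in Lemma~\ref{lemonA} grow like $C^i$, and $i$ must be linear in $j$. I will check that the aperture needed is $2^i\gtrsim (2^j+1)^\beta$, so $i\approx\beta j$ and $C^i=2^{j\beta\log_2C}$, which is of the required form $2^{jD}$.
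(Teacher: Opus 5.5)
Your proposal is correct and follows essentially the paper's argument. The easy direction is the same pointwise doubling bound. For the hard direction, both you and the paper pass to balls of radius $k^{1/\beta}/2$, use that such a ball is contained in $B(w,k^{1/\beta})$ for every $w$ in it, average over those $w$, and finish with the change-of-aperture Lemma~\ref{lemonA}; the only difference is that the paper pigeonholes a single such ball carrying a $2^{-jD_G}$ share of the $L^q$ mass, while you sum over a whole covering. Your aperture bookkeeping, $i\approx\beta(j+2)$ with $C^{i}=2^{O(j)}$, is slightly more careful than the paper's write-up, which just writes $\A_{[j]}$.
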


\bp First, note that we have with \eqref{defDimension} that $V(y,2^jk^{1/\beta}) \lesssim 2^{jD_G} V(y,k^{1/\beta})$, and hence
\[\A^q\mathfrak f(x) \leq C2^{jD_G/q} \A^q_{(j)}\mathfrak f(x),\]
with constants independent on $x\in G$, and the collection $\mathfrak f$. So the difficult bound is the second one. 

\medskip

Take $x\in G$ and $k\in \N^*$. Assume that $q<+\infty$, since the case $q=\infty$ is an easier variant. We easily have with the doubling property that
\begin{multline*}
I_k:= \sum_{y\in B(x,k^{1/\beta})} \frac{m(y)}{k V(x,k^{1/\beta})V(y,2^jk^{1/\beta})^\frac2q} \|f_k\|_{L^q(B(y,2^jk^{1/\beta}))}^2 \\
\lesssim  \frac1k  \left|  \frac{1}{V(x,2^{j+1}k^{1/\beta})}  \sum_{z\in B(x,2^{j+1}k^{1/\beta})} |f_k(z)|^q m(z) \right|^{\frac2q}
\end{multline*}
Take a ball $\bar B^k_x \subset B(x,2^{j+1}k^{1/\beta})$ of radius $k^{1/\beta}/2$ such that 
\[ \sum_{z\in \bar B_{x}^k}|f_k(z)|^q m(z) \gtrsim \frac{1}{2^{jD_G}}  \sum_{z\in B(x,2^{j+1}k^{1/\beta})}|f_k(z)|^q m(z),\]
which is possible thanks to the Vitali lemma, and so we have, using the doubling property and \eqref{defDimension} when necessary,
\begin{multline*}
I_k \lesssim \frac{2^{2jD_G/q}}{k}  \left|  \frac{1}{V(x,2^{j+1}k^{1/\beta})} \sum_{z\in \bar B^k_x}|f_k(z)|^q m(z) \right|^{\frac2q} \\
\lesssim  \frac{2^{2jD_G/q}}{k} \frac{1}{V(\bar B^k_x)}  \sum_{y\in \bar B^k_x} m(y) \left| \frac1{V(x,2^{j+1}k^{1/\beta})}  \sum_{z\in B(y,k^{1/\beta})}|f_k(z)|^q m(z) \right|^{\frac2q}  \\
 %\lesssim \frac{2^{4jD_G/q}}{k}  \left| \frac{1}{V(\bar B^k_x)} \sum_{z\in \bar B^k_x}|f_k(z)|^q m(z) \right|^{\frac2q} \\
\lesssim  2^{jD_G(1+4/q)} \sum_{y\in B(x,2^{j+2}k^{1/\beta})} \frac{m(y)}{kV(x,k^{1/\beta})} \left|  \frac1{V(y,k^{1/\beta})}  \sum_{z\in B(y,k^{1/\beta})}|f_k(z)|^q m(z) \right|^{\frac2q} \\
\lesssim 2^{jD_G(1+4/q)} \sum_{y\in B(x,2^{j+2}k^{1/\beta})} \frac{m(y)}{kV(x,k^{1/\beta})} \big|\bar f_k(y)\big|^2
\end{multline*}
where
\[ \bar f_k(y) := \frac{1}{V(y,k^{1/\beta})^{1/q}} \|f_k\|_{L^q(B(y,k^{1/\beta}))}.\]

\medskip

Now, we sum over $k\in \N^*$ to get that 
\[ \A_{(j)}^q\big(\frak f\big)(x) \lesssim 2^{jD_G(1/2+2/q)}  \A_{[j]}  \big(\bar{\mathfrak f}\big)(x).\]
By Lemma \ref{lemonA}, we can find $D_p$ such that 
\begin{multline*} 
\big\|\A_{(j)}^q\big(\frak f\big)\big\|_{L^p(G)} \lesssim 2^{jD_G(1/2+2/q)} \big\|\A_{[j]} (\bar{\mathfrak f})\big\|_{L^p(G)} \\
\lesssim 2^{jD_G(1/2+2/q) + jD_p} \big\|\A(\bar{\mathfrak f})\big\|_{L^p(G)} = 2^{jD} \big\|\A^q(\mathfrak f)\big\|_{L^p(G)}
\end{multline*}
for a $D$ independent of $\frak f$. The lemma follows.
\ep

\begin{lemma}  \label{lemLP2}
Let $G$ be a doubling graph, $q\in [1,\infty]$ and $p\in (0,\infty)$. Then there exists $C>0$ and $D>0$ such that, for any $j\in \N^*$, any collection of functions $\mathfrak f = \{f_k\}_{k\in \N^*}$, and any $x\in G$, we have
\[ C^{-1}2^{-jD} \C^p(\mathfrak f)(x) \leq \C^q_{(j)}( \mathfrak f)(x) \leq C2^{jD} \C^q(\mathfrak f)(x).\]
\end{lemma}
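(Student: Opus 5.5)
The plan is to compare both functionals term by term inside a fixed Carleson box, exactly in the spirit of the proof of Lemma~\ref{lemLP2A}. The difference is that the statement is now pointwise, so the change of aperture has to be absorbed by enlarging the Carleson box rather than by invoking Lemma~\ref{lemonA}. (I read the left-hand side $\C^p(\mathfrak f)$ as $\C^q(\mathfrak f)$.) Throughout, set
\[ \bar f_k(y) := V(y,k^{1/\beta})^{-1/q}\|f_k\|_{L^q(B(y,k^{1/\beta}))}, \qquad \bar f^{(j)}_k(y):=V(y,2^jk^{1/\beta})^{-1/q}\|f_k\|_{L^q(B(y,2^jk^{1/\beta}))}. \]
I treat $q<\infty$; the case $q=\infty$ is the same with suprema in place of averages.

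\emph{Lower bound.} This one is termwise. Since $B(y,k^{1/\beta})\subset B(y,2^jk^{1/\beta})$, \eqref{defDimension} gives $\bar f_k(y)\le (V(y,2^jk^{1/\beta})/V(y,k^{1/\beta}))^{1/q}\bar f^{(j)}_k(y)\lesssim 2^{jD_G/q}\bar f^{(j)}_k(y)$. Inserting this in the definition for each fixed ball $B(x,r)$ yields $\C^q(\mathfrak f)(x)\lesssim 2^{jD_G/q}\C^q_{(j)}(\mathfrak f)(x)$.

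\emph{Upper bound, step 1: a pointwise bound on $\bar f^{(j)}_k$.} I will show
\[ \bar f^{(j)}_k(y)^2\lesssim 2^{jD'}\frac{1}{V(y,2^{j+2}k^{1/\beta})}\sum_{y'\in B(y,2^{j+2}k^{1/\beta})}\bar f_k(y')^2m(y'). \]
To get it, cover $B(y,2^jk^{1/\beta})$ by $N\lesssim 2^{jD_G}$ balls $B(y_i,k^{1/\beta}/2)$ with $y_i\in B(y,2^jk^{1/\beta})$; this uses the Vitali covering lemma and doubling. Choose the index $i$ carrying the largest $L^q$ mass of $f_k$. For every $y'\in B(y_i,k^{1/\beta}/2)$ we have $B(y_i,k^{1/\beta}/2)\subset B(y',k^{1/\beta})$, and doubling gives $V(y',k^{1/\beta})\approx V(y,k^{1/\beta})\gtrsim 2^{-jD_G}V(y,2^jk^{1/\beta})$. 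Together these give $\bar f^{(j)}_k(y)\lesssim 2^{jD''}\bar f_k(y')$ for all such $y'$. Averaging the square over $y'\in B(y_i,k^{1/\beta}/2)\subset B(y,2^{j+2}k^{1/\beta})$ and using doubling once more to compare $V(y_i,k^{1/\beta}/2)$ with $V(y,2^{j+2}k^{1/\beta})$ gives the displayed bound.

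\emph{Upper bound, step 2: enlarging the Carleson box.} Fix $x$ and $r$, and sum the bound of step 1 over $(y,k)\in\widehat{B(x,r)}$. For such pairs, $y\in B(x,r)$ and $k^{1/\beta}\le r$. Exchange the sums in $y$ and $y'$ by Fubini. For fixed $(y',k)$, the $y$'s involved lie in $B(y',2^{j+2}k^{1/\beta})$, whose measure is comparable to $V(y,2^{j+2}k^{1/\beta})$; hence the weight attached to $(y',k)$ is $\lesssim 2^{jD'}m(y')/k$. The pairs $(y',k)$ that occur satisfy $\delta(x,y')<r+2^{j+2}r\le 2^{j+3}r$ and $k^{1/\beta}\le r$. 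The distance from such a $y'$ to the complement of $B(x,2^{j+4}r)$ is therefore at least $r\ge k^{1/\beta}$, so $(y',k)\in\widehat{B(x,2^{j+4}r)}$. Finally, $V(x,2^{j+4}r)\lesssim 2^{(j+4)D_G}V(x,r)$ by \eqref{defDimension}. Combining,
\[ \frac1{V(x,r)}\sum_{\widehat{B(x,r)}}\frac{m(y)}{k}\bar f^{(j)}_k(y)^2\lesssim 2^{jD}\frac{1}{V(x,2^{j+4}r)}\sum_{\widehat{B(x,2^{j+4}r)}}\frac{m(y')}{k}\bar f_k(y')^2\le 2^{jD}\C^q(\mathfrak f)(x)^2. \]
Taking the supremum over $r$ gives the upper bound.

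The main obstacle is step 1, namely converting an average over the large ball into an average of the small-ball quantities $\bar f_k$ with only a $2^{jD}$ loss. The point to watch is that the power $2/q$ cannot simply be pulled inside the average when $q>2$. The max-mass subball choice is what avoids any Hölder inequality, because it delivers a pointwise lower bound on $\bar f_k(y')$ uniformly over a whole subball, and only then is the square averaged. The remaining bookkeeping of the Carleson region in step 2 is routine.
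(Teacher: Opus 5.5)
Your proof is correct and follows essentially the paper's route. The key device is the same: a max-mass sub-ball of radius $k^{1/\beta}/2$, which is contained in $B(y',k^{1/\beta})$ for each of its points $y'$ and so avoids any H\"older step, followed by doubling and an enlargement of the Carleson box by a factor of order $2^{j}$. The paper runs this through a bounded-overlap covering of $B(x,r)$ by balls of radius $k^{1/\beta}$ at each scale $k$, while you bound pointwise in $y$ and then use Fubini; that is only a bookkeeping difference.

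One harmless inaccuracy, which the paper shares: $(y,k)\in\widehat{B(x,r)}$ only gives $k^{1/\beta}<2r$, not $k^{1/\beta}\le r$, because the distance from $y$ to $G\setminus B(x,r)$ can be close to $2r$. With the correct bound one gets $\delta(x,y')<r+2^{j+3}r$. The distance from $y'$ to the complement of $B(x,2^{j+4}r)$ then still exceeds $(2^{j+3}-1)r\ge 2r>k^{1/\beta}$, so your pairs $(y',k)$ still lie in $\widehat{B(x,2^{j+4}r)}$ and nothing changes.
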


\bp
We do not need to be particularly subtle for the proof of the lemma. Moreover, we write the proof for the case $q<+\infty$, as the case $q=\infty$ is a simpler variant.

\medskip

We write $D_G$ for an upper bound on the ``dimension'' given in \eqref{defDimension}. The first inequality of the lemma is clear, as the doubling property, or more exactly \eqref{defDimension}, gives that $V(y,2^jk^{1/\beta}) \lesssim 2^{jD_G} V(y,k^{1/\beta})$ and hence
\[\C^p\mathfrak f(x) \leq C2^{jD_G/q} \C^p_{(j)}\mathfrak f(x).\]

\medskip

Let us turn to the second inequality, which is a bit less clear. Let $\mathfrak f = \{f_k\}_{k\in \N^*}$ be a collections of functions on $G$. Pick a ball $B=B(x,r) \subset G$ and $k \in \N^*$ such that $k\leq r^\beta$. By the Vitali lemma, we can find a covering $\{B_i^k\}_{1\leq i \leq N_k}$ of $B$ by $N_k\leq Cr/k^{1/\beta}$ balls of radius $k^{1/\beta}$. We can also ensure that $\sum_{1\leq i \leq N} \1_{2^{j+2}B_i^k}$ is bounded by $C2^{jD_G}$, where $C$ is independent of $B$ and $k$.  It ensures that
\begin{multline*}
I_k := \sum_{y\in B} \left|  \frac1{V(y,2^jk^{1/\beta})} \sum_{z\in B(y,2^jk^{1/\beta})} |f_k(z)|^q m(z) \right|^{\frac2 q} \frac{m(y)}{k} \\
\leq \sum_{i=1}^{N_k} \sum_{y\in B^k_i} \left|  \frac1{V(y,2^jk^{1/\beta})} \sum_{z\in B(y,2^jk^{1/\beta})}|f_k(z)|^q m(z) \right|^{\frac2 q} \frac{m(y)}{k}
\end{multline*}
But since $B(y,2^jk^{1/\beta}) \subset 2^{j+1}B^k_i$ whenever $y\in B^k_i$, we have
\[I_k \lesssim \sum_{i=1}^{N_k} V(B^k_i) \left|  \frac1{V(2^{j+1}B^k_i)} \sum_{z\in 2^{j+1}B_{i}^k} |f_k(z)|^q m(z)\right|^{\frac2 q} \frac{m(y)}{k}. \]
For each $i\in \{1,\dots,N_k\}$, take $\bar B_i^k \subset 2^{j+2}B_i^k$ a ball of radius $k^{1/\beta}/2$ such that 
\[ \sum_{z\in \bar B_{i}^k}|f_k(z)|^q m(z) \gtrsim \frac{1}{2^{jD_G}}  \sum_{z\in 2^{j+1}B_{i}^k}|f_k(z)|^q m(z),\]
and since $V(2^{j+1}B_i^k) \lesssim 2^{jD_G} V(\bar B_i^k)$, we get that
\begin{multline*}
I_k \lesssim 2^{jD_G(4/q+1)} \sum_{i=1}^{N_k} V(\bar B^k_i) \left|  \frac1{V(\bar B^k_i)} \sum_{z\in  \bar B_{i}^k} |f_k(z)|^q m(z) \right|^{\frac2 q} \frac{m(y)}{k} \\
\lesssim 2^{jD_G(1+4/q)} \sum_{i=1}^{N_k} \sum_{y\in \bar B_i^k} \left|  \frac1{V(y,k^{1/\beta})} \sum_{z\in  B(y,k^{1/\beta})} |f_k(z)|^q m(z) \right|^{\frac2 q} \frac{m(y)}{k} \\
\leq 2^{jD_G(1+4/q)} \sum_{y\in 2^{j+2}B}  \sum_{i=1}^{N_k} \1_{2^{j+2}B^k_i}(y) \left|  \frac1{V(y,k^{1/\beta})} \sum_{z\in  B(y,k^{1/\beta})} |f_k(z)|^q m(z) \right|^{\frac2 q} \frac{m(y)}{k} \\
\lesssim 2^{jD_G(2+4/q)} \sum_{y\in 2^{j+2}B} \left|  \frac1{V(y,k^{1/\beta})} \sum_{z\in  B(y,k^{1/\beta})} |f_k(z)|^q m(z) \right|^{\frac2 q} \frac{m(y)}{k},
\end{multline*}
where we use the fact that $\bar B_i^k \subset 2^{j+2}B_i^k$ for the third line, and the fact that $\sum_{1\leq i \leq N} \1_{2^{j+2}B_i^k} \lesssim 2^{jD_G}$ for the last one.

\medskip

Now, we sum over the $k\in \N^*$ and we get
\begin{multline*}
 \left(\frac{1}{V(x,r)} \sum_{k=1}^{r^\beta} \sum_{y\in B(x,r)} \frac{m(y)}{k}\left|  \frac1{V(y,2^jk^{1/\beta})} \sum_{z\in B(y,2^jk^{1/\beta})} |f_k(z)|^q m(z) \right|^{\frac2 q} \right)^\frac{1}{2} \\
 \lesssim 2^{jD_G(1+2/q)}  \left(\frac{1}{V(x,r)} \sum_{k=1}^{r^\beta}  \sum_{y\in 2^{j+2}B} \left|  \frac1{V(y,k^{1/\beta})} \sum_{z\in  B(y,k^{1/\beta})} |f_k(z)|^q m(z) \right|^{\frac2 q} \frac{m(y)}{k} \right)^\frac{1}{2}  \\
 \lesssim 2^{jD_G(3/2+2/q)}  \left(\frac{1}{V(x,2^{j+2}r)} \sum_{(y,k) \in \widehat{2^{j+2}B}} \left|  \frac1{V(y,k^{1/\beta})} \sum_{z\in  B(y,k^{1/\beta})} |f_k(z)|^q m(z) \right|^{\frac2 q} \frac{m(y)}{k} \right)^\frac{1}{2} \\
 \leq 2^{jD_G(3/2+2/q)} \mathcal C^q \mathfrak f(x),
\end{multline*}
Using again the doubling property to control the ratio $V(x,2^{j+2}r)/V(x,r)$. Taking the supremum on $r\in \N^*$ gives 
\[\mathcal C^q_{(j)} \mathfrak f(x) \lesssim 2^{jD_G(3/2+2/q)} \mathcal C^q \mathfrak f(x)\]
as desired for the lemma.
\ep

\subsection{Horizontal Lusin functional}

\begin{definition} \label{defLa}
 Define, for $\alpha>0$, the quadratic functionals $g_\alpha$ and $L_\alpha$ on $L^2(G)$ by
 $$g_\alpha f(x) :=  \left( \sum_{k\geq 1} k^{2\alpha-1} |\Delta^{\alpha} P^{k-1} f(y)|^2 m(x) \right)^{\frac{1}{2}}$$
 and
$$L_\alpha f(x) :=  \left( \sum_{(y,k) \in \Gamma(x)} \frac{k^{2\alpha-1}}{V(x,k)} |\Delta^{\alpha} P^{k-1} f(y)|^2m(y) \right)^{\frac{1}{2}} = \mathcal A(\{k\Delta^\alpha P^{k-1} f\}_{k\in \N^*})(x),$$
where $\Gamma(x) = \left\{ (y,k) \in G \times \N^*, \, \delta(x,y)^\beta < k \right\}$, as before.
\end{definition}

\begin{theorem} \label{theoga}
Let $(G,\mu)$ be a graph as defined in Subsection \ref{defgraphs}. If $G$ satisfies \eqref{LB} and $\alpha>0$, then the Littlewood-Paley functional $g_\alpha$ is bounded on $L^p$ for all $p\in (1,\infty)$. By duality, we also have the reverse bound. It means that there exists $C_{\alpha,p}$ such that 
\[ C_{\alpha,p}^{-1} \|f\|_{L^p(G)} \leq  \|g_\alpha(f)\|_{L^p(G)} \leq C_{\alpha,p} \|f\|_{L^p(G)}  \]
for any $f\in L^p(G)$.
\end{theorem}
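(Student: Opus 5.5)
We prove the upper bound $\|g_\alpha f\|_{L^p}\lesssim\|f\|_{L^p}$ for every $p\in(1,\infty)$ first, and then obtain the lower bound by duality. (We read the displayed definition of $g_\alpha$ as the vertical square function $g_\alpha f(x)=(\sum_{k\ge1}k^{2\alpha-1}|\Delta^\alpha P^{k-1}f(x)|^2)^{1/2}$; the stray $m(x)$ and $y$ look like typos.)

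\textbf{Upper bound, case $p=2$.} By \eqref{LB} and \eqref{I+Pinvert}, the $L^2$ spectrum of the self-adjoint operator $P$ lies in $[-1+\epsilon',1]$ for some $\epsilon'>0$. Hence the spectrum of $\Delta=I-P$ lies in $[0,2-\epsilon']$. The spectral theorem then gives
\[
\|g_\alpha f\|_2^2=\int_{[0,2-\epsilon']} \sum_{k\ge1}k^{2\alpha-1}\lambda^{2\alpha}(1-\lambda)^{2k-2}\,dE_{f,f}(\lambda).
\]
We need the inner sum to be bounded uniformly in $\lambda$. For $\lambda\in(0,1]$, compare with $\int_0^\infty t^{2\alpha-1}\lambda^{2\alpha}e^{-2t\lambda}\,dt=\Gamma(2\alpha)2^{-2\alpha}$. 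For $\lambda\in[1,2-\epsilon']$, we have $|1-\lambda|\le 1-\epsilon'$, so the series converges geometrically. This gives $\|g_\alpha f\|_2\lesssim\|f\|_2$.

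\textbf{Upper bound, all $p\in(1,\infty)$.} Consider the vector-valued operator $T:f\mapsto\{k^{\alpha-1/2}\Delta^\alpha P^{k-1}f\}_{k\ge 1}$ into $L^p(\ell^2)$. The cleanest route avoids any kernel estimates, which we do not have at this level of generality: we use Stein's theorem for symmetric Markov semigroups in its discrete-time form. The ingredients are these.
\begin{itemize}
\item $P$ is self-adjoint, positive-preserving and Markovian; by \eqref{sumpk=1} it is a contraction on every $L^p$.
\item By \eqref{LB}, $\{P^k\}_k$ is analytic (\eqref{Analyticity}). Equivalently, $P$ is a Ritt operator on each $L^p$, $1<p<\infty$.
\end{itemize}
Le Merdy and Xu proved that a positive contractive Ritt operator on $L^p$ (and symmetric Markov operators with $-1$ outside the spectrum are of this type) admits the square function estimate $\|(\sum_k k^{2\alpha-1}|(I-P)^\alpha P^{k-1}f|^2)^{1/2}\|_p\lesssim\|f\|_p$ for every $\alpha>0$. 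So the key step is to verify the Ritt property from \eqref{LB} and to invoke this result. For $\alpha=1/2$ one can alternatively argue directly: use the transference $f\mapsto P^k f$, a martingale/Stein argument via the Rota dilation, and then a Hardy-type comparison to pass to general $\alpha$. Changing $\alpha$ is handled by writing $\Delta^{\alpha}P^{k-1}=\Delta^{\alpha-\alpha'}P^{\lfloor k/2\rfloor}\cdot\Delta^{\alpha'}P^{\lceil k/2\rceil-1}$ and using the uniform bound from \eqref{Analyticity}. This requires an $\ell^2$-valued extension of that bound, which follows from positivity/domination of $P^{j}$ by a maximal operator or from the $R$-boundedness that Ritt operators on $L^p$ enjoy.

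\textbf{Reverse bound.} Polarizing the $p=2$ computation gives, for $f,h\in L^2\cap L^p$ with $h\in L^{p'}$,
\[
c_\alpha\langle f,h\rangle=\sum_k k^{2\alpha-1}\langle\Delta^\alpha P^{k-1}f,\Delta^\alpha P^{k-1}h\rangle+\langle \Pi f,h\rangle .
\]
Here $c_\alpha$ comes from $\sum_k k^{2\alpha-1}\lambda^{2\alpha}(1-\lambda)^{2k-2}$. This sum is not exactly constant in $\lambda$, so we instead use a reproducing identity $\sum_k a_k\Delta^{2\alpha}P^{2k-2}=I$ on the range of $\Delta$, which holds with $a_k\approx k^{2\alpha-1}$ chosen by a binomial expansion of $\Delta^{-2\alpha}=(I-P^2)^{-2\alpha}(I+P)^{2\alpha}$. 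The spectral projection term $\Pi$ onto $\ker\Delta$ vanishes, since $L^2$-harmonic functions are zero (as in Lemma~\ref{lemDuality}(a)). Cauchy--Schwarz in $k$, then H\"older, and the upper bound on $L^{p'}$ give $|\langle f,h\rangle|\lesssim\|g_\alpha f\|_p\|h\|_{p'}$, which yields the lower bound.

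\textbf{Main obstacle.} The hard part is the $L^p$ square function bound for $p\neq2$, since no Calder\'on--Zygmund structure is assumed. We rely on the Ritt/Stein theory, which uses only \eqref{LB}, positivity and symmetry.
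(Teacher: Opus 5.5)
Your proposal follows essentially the paper's route. The paper only cites Stein and the square-function theorems for positive contractive Ritt operators of Le Merdy--Xu ($\alpha=1$) and Arhancet--Le Merdy (all $\alpha>0$), and says the reverse bound follows by duality; that duality step is the polarization argument you sketch, which the paper writes out for $L_\alpha$ in Theorem~\ref{theoLa}.

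One slip needs fixing: the binomial expansion of $(I-P^2)^{-2\alpha}(I+P)^{2\alpha}$ does not produce scalars $a_k$ with $\sum_k a_k\Delta^{2\alpha}P^{2k-2}=I$, because the operator factor $(I+P)^{2\alpha}$ survives. Either keep that factor and move it onto $h$, using $\sum_k c_k^{(2\alpha)}(I-P^2)^{2\alpha}P^{2k-2}=I$ together with the $L^{p'}$-boundedness of $(I+P)^{2\alpha}$, or use $\sum_{n\ge 0}c^{(2\alpha)}_{n+1}\Delta^{2\alpha}P^{n}=I$ and split $P^n=P^{\lfloor n/2\rfloor}P^{\lceil n/2\rceil}$ before applying Cauchy--Schwarz.
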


\bp
For instance, the proof can be found in \cite[Chapter IV]{SteinLP} or \cite[Theorem 3.3]{LMX12} for the case $\alpha = 1$, and \cite[Theorem 3.3]{ALM} for the extension to other values of $\alpha>0$.
\ep

\begin{theorem} \label{theoLa}
Let $(G,\mu)$ be a graph as defined in Subsection \ref{defgraphs}. 
If $\alpha >0$ and  $G$ satisfies \eqref{LB}, \eqref{D} and \ref{UE} for some $\beta \geq 2$, then the Lusin functional $L_\alpha$ is bounded on $L^p$ for all $p\in (1,\infty)$. By duality, we also have the reverse bound. It means that there exists $C_{\alpha,p}$ such that 
\[ C_{\alpha,p}^{-1} \|f\|_{L^p(G)} \leq  \|L_\alpha(f)\|_{L^p(G)} \leq C_{\alpha,p} \|f\|_{L^p(G)}  \]
for any $f\in L^p(G)$.
\end{theorem}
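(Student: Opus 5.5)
We plan to compare $L_\alpha$ with the vertical functional $g_\alpha$ of Theorem~\ref{theoga}, which is already known to be bounded with reverse bounds on every $L^p$. The argument splits into the upper bound $\|L_\alpha f\|_p\lesssim\|f\|_p$ and the reverse bound $\|f\|_p\lesssim \|L_\alpha f\|_p$, with the second coming from the first by duality.

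\textbf{Case $p=2$.} Here the comparison is an exact computation. By Fubini, and since $\sum_{x: \delta(x,y)^\beta<k} m(x)/V(x,k^{1/\beta}) \approx 1$ by \eqref{D}, we get $\|L_\alpha f\|_2\approx \|g_\alpha f\|_2\approx\|f\|_2$. (We read the normalization $V(x,k)$ in Definition~\ref{defLa} as $V(x,k^{1/\beta})$, consistent with the identity $L_\alpha f=\mathcal A(\{k\Delta^\alpha P^{k-1}f\})$ up to the power of $k$.)

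\textbf{Upper bound for $p\in(1,2)$.} We view $f\mapsto \{k^{\alpha}\Delta^\alpha P^{k-1}f\}_k$ as a map from $L^2$ into $T^2(G)$ and prove it is of weak type $(1,1)$ into $T^1$-type norms, via a Calder\'on--Zygmund decomposition adapted to \eqref{D}. The bad part on each cube $Q_i$ of radius $r_i$ is handled by the usual device: write $b_i=(I-P^{r_i^\beta})^M b_i+[I-(I-P^{r_i^\beta})^M]b_i$. The second piece is controlled in $L^2$ through the $L^1\to L^2$ bounds given by \ref{UE}. For the first piece, away from $4Q_i$, the kernel of $k^\alpha\Delta^\alpha P^{k-1}(I-P^{r_i^\beta})^M$ is estimated using \ref{UE}, \eqref{dkpk} and analyticity \eqref{Analyticity}. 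This yields off-diagonal decay of the form $\min(k/r_i^\beta,r_i^\beta/k)^{\epsilon}$ times the sub-Gaussian factor, which is summable over $(y,k)\in\Gamma(x)$ and integrable in $x\notin 4Q_i$. For non-integer $\alpha$, $\Delta^\alpha$ is expanded as a convergent series in $P$ (binomial series of $(I-P)^\alpha$), so the kernel bounds pass through with the same sub-Gaussian decay. Marcinkiewicz interpolation with $p=2$ then gives the range $1<p\le2$.

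\textbf{Upper bound for $p\in(2,\infty)$.} We use the equivalence $\|\mathcal A\mathfrak f\|_p\approx\|\mathcal C\mathfrak f\|_p$ from Theorem~\ref{theoTent}. It suffices to prove the pointwise estimate $\mathcal C(\{k^{\alpha}\Delta^\alpha P^{k-1}f\})(x)\lesssim \big(M(|f|^2)(x)\big)^{1/2}$, where $M$ is the Hardy--Littlewood maximal function, which is bounded on $L^{p/2}$. For a ball $B$, split $f=f\1_{4B}+\sum_{j\ge2}f\1_{2^{j+1}B\setminus 2^jB}$. The local term is controlled by the $L^2$ bound. The annular terms are controlled through the sub-Gaussian decay of the kernels for $k\le r_B^\beta$, which gives factors $e^{-c2^{j\beta/(\beta-1)}}$ that beat the doubling growth.

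\textbf{Reverse bound.} For $p\in(1,\infty)$, the $L^2$ identity gives $\|f\|_2^2\approx\sum_{k,y} k^{2\alpha-1}\Delta^\alpha P^{k-1}f\,\Delta^\alpha P^{k-1}g\, m(y)$ after polarization. We rewrite this sum as the tent pairing $\langle \mathfrak F,\mathfrak G\rangle$. The $T^p$--$T^{p'}$ duality of Theorem~\ref{theoTent}, combined with the upper bound for $p'$, then yields $\|f\|_p\lesssim\|L_\alpha f\|_p\sup_{\|g\|_{p'}\le1}\|L_\alpha g\|_{p'}$. One subtlety is that $\sum_k k^{2\alpha-1}\Delta^{2\alpha}P^{2k-2}$ is not exactly a multiple of the identity, only a bounded operator with bounded inverse on $L^2$ via the spectral theorem. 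To fix this, we replace $g$ by $h(\Delta)g$ for a bounded functional calculus, or simply pair $L_\alpha$ with a slightly different $\tilde\alpha$.

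\textbf{Main obstacle.} The hardest step is the off-diagonal kernel estimate for $\Delta^\alpha P^{k-1}(I-P^{r^\beta})^M$ with fractional $\alpha$. Our first approach is the subordination series $\Delta^\alpha=\sum_n c_n P^n$ together with \eqref{dkpk}.
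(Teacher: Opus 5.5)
\textbf{The reverse bound has a genuine gap.} Your polarized $L^2$ identity is really $\langle \Phi f, g\rangle$ with $\Phi=\sum_{k\ge1}k^{2\alpha-1}\Delta^{2\alpha}P^{2k-2}$. Your duality argument therefore only gives $\|f\|_p\lesssim\|L_\alpha f\|_p\sup_{\|g\|_{p'}\le 1}\|L_\alpha \Phi^{-1}g\|_{p'}$, and you need $\Phi^{-1}$ to be bounded on $L^{p'}$.

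The spectral theorem gives this only on $L^2$, and appealing to a bounded functional calculus is not routine. For $\alpha=2$ one finds $\Phi=(I+4P^2+P^4)(I+P)^{-4}$, whose symbol vanishes at $\lambda=\pm i\sqrt{2-\sqrt3}$, inside the unit disc. Any Stolz domain large enough to contain $[-1+2\epsilon,1)$ contains these points when $\epsilon$ is small. So $\Phi^{-1}$ is not produced by an $H^\infty$ calculus for $P$, and you would need extra information on the $L^{p'}$-spectrum of $P$. Changing $\alpha$ does not by itself give an exact reproducing formula either.

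The idea you are missing, which the paper uses, is an exact discrete Calder\'on formula:
\[ \sum_{k\ge1}c_k^{(2\alpha)}(I-P^2)^{2\alpha}P^{2k-2}=I \ \text{ on } L^2(G), \qquad (1-z)^{-2\alpha}=\sum_{k\ge1}c_k^{(2\alpha)}z^{k-1}. \]
This makes $\langle f,h\rangle$ exactly a tent-space pairing of $\{(I-P^2)^\alpha P^{k-1}f\}_k$ and $\{(I-P^2)^\alpha P^{k-1}h\}_k$. Since $c_k^{(2\alpha)}\approx k^{2\alpha-1}$ and $(I-P^2)^\alpha=\Delta^\alpha(I+P)^\alpha$, the resulting functional is comparable to $L_\alpha(I+P)^\alpha$. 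The upper bound then gives $\|f\|_p\lesssim\|L_\alpha(I+P)^\alpha f\|_p$, and the only operator left to invert is $I+P$, which is invertible on $L^p$ by \eqref{LB}.

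\textbf{Upper bound.} Your claim that the binomial series of $(I-P)^\alpha$ transmits sub-Gaussian decay is false for non-integer $\alpha$. The terms $P^n$ with $n\gtrsim d^\beta$, where $d=\delta(x,y)$, carry weights of size $n^{-1-\alpha}$. As a result, the kernel of $\Delta^\alpha P^{k-1}$ only decays like $V(x,d)^{-1}d^{-\alpha\beta}$ for $d\ge k^{1/\beta}$; already on $\mathbb Z$, $\Delta^\alpha$ has a kernel of size $|x-y|^{-1-2\alpha}$. Replacing the factors $e^{-c2^{j\beta/(\beta-1)}}$ by $2^{-j\alpha\beta}$ still gives convergent sums, so your Calder\'on--Zygmund argument for $p<2$ and your Carleson-functional argument for $p>2$ can be repaired.

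The paper avoids all of this. It splits $P^{k-1}=P^lP^{k-1-l}$ with $l\approx k/2$, and uses only \ref{UE} and \eqref{D} to get $|P^l h(z)|\lesssim\mathcal M h(y)$ whenever $\delta(z,y)^\beta\lesssim l$. This yields the pointwise bound $L_\alpha f\lesssim\big(\sum_k k^{2\alpha-1}|\mathcal M(\Delta^\alpha P^{k-1}f)|^2\big)^{1/2}$. The Fefferman--Stein inequality together with Theorem~\ref{theoga} then gives boundedness for all $p\in(1,\infty)$ at once. The fractional power is absorbed into $g_\alpha$, so no off-diagonal estimate for $\Delta^\alpha P^{k-1}$ is ever needed, and the obstacle you identified does not arise.
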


\bp
Assume that $x\in G$, $y\in G$, and $k\in \N^*$ are such that $\delta(x,y)^\beta < 9k$. We claim that for any $g\in L^2(G)$, we have
\begin{equation} \label{claimPkg}
 |P^{k-1} g(x)| \leq \mathcal M(g)(y),\end{equation}
where $\mathcal M$ is the uncentered maximal function 
\[ \mathcal M(g)(x) := \sup_{B\ni x} \sum_{z\in B} |g(z)| \, m(z).\] 
Indeed, 
\begin{multline*}
|P^{k-1}g(x)| = \Big| \sum_{z\in G} p_{k-1}(x,z) g(z) m(z) \Big| \\ \lesssim \frac1{V(x,k^{1/\beta})} \sum_{z\in G} |g(z)| \exp\left( - c \Big[ \frac{\delta(x,y)^\beta}{k} \Big]^{\frac1{\beta-1}} \right) m(z)
\end{multline*}
We partition the graph $G$ into annulus. If $C_1(x,k^{1/\beta}) = B(x,4k^{1/\beta})$ and $C_j(x,k^{1/\beta}) =  B(x,2^{j+1}k^{1/\beta}) \setminus B(x,2^{j}k^{1/\beta})$, we obtain then that 
\begin{multline*}
|P^{k-1}g(x)|  \lesssim  \sum_{j\geq 1}  \frac1{V(x,k^{1/\beta})} \sum_{C_j(x,k^{1/\beta})} |g(z)| \exp\left( - c \Big[ \frac{\delta(x,y)^\beta}{k} \Big]^{\frac1{\beta-1}} \right) m(z) \\
\leq  \sum_{j\geq 1}   \frac{V(x,2^{j+1}k^{1/\beta})}{V(x,k^{1/\beta})} \exp\left( - c 2^{\frac{j\beta}{\beta-1}} \right)   \frac1{V(x,2^{j+3}k^{1/\beta})} \sum_{B(x,2^{j+3}k^{1/\beta})} |g(z)| m(z) \\
\lesssim \mathcal M(g)(y) \sum_{j\geq 1}   \frac{V(x,2^{j+3}k^{1/\beta})}{V(x,k^{1/\beta})} \exp\left( - c 2^{\frac{j\beta}{\beta-1}} \right)  \lesssim \mathcal M(g)(y),
\end{multline*}
since $y\in B(x,2^{j+3}k^{1/\beta})$ for all $j\geq 1$, and 
\[ \sum_{j\geq 1} \frac{V(x,2^{j+1}k^{1/\beta})}{V(x,k^{1/\beta})} \exp\left( - c 2^{\frac{j\beta}{\beta-1}} \right)  \lesssim    \sum_{j\geq 1} 2^{jD_G} \exp\left( - c 2^{\frac{j\beta}{\beta-1}} \right)  \lesssim 1\]
by \eqref{defDimension}.  The claim \eqref{claimPkg} follows.

\medskip

From there, we have then that, if $l=(k-1)/2$ or $k/2-1$ (depending whether $k$ is odd or even),
\begin{multline*}
\frac1{V(y,k^{1/\beta})} \sum_{z\in B(y,k^{1/\beta})} |\Delta^\alpha P^{k-1} f(z) |^2 m(z) = \frac1{V(y,k^{1/\beta})} \sum_{z\in B(y,k^{1/\beta})} |P^{l}[\Delta^\alpha P^{k-1-l} f(z)]|^2 m(z) \\
\lesssim  \frac1{V(y,k^{1/\beta})} \sum_{z\in B(y,k^{1/\beta})}  |\mathcal M(\Delta^\alpha P^{k-1-l} f)(x)|^2 m(z) =  |\mathcal M(\Delta^\alpha P^{k-1-l} f)(x)|^2 \end{multline*}
Summing over $k\in \N^*$ yields
\begin{multline*}
L_\alpha f(y) := \left( \sum_{k\geq 1} k^{2\alpha-1} \frac1{V(y,k^{1/\beta})} \sum_{z\in B(y,k^{1/\beta})} |\Delta^\alpha P^{k-1} f(z) |^2 m(z) \right)^\frac12 \\ \lesssim \left( \sum_{k\geq 1} k^{2\alpha-1}  |\mathcal M(\Delta^\alpha P^{\lfloor (k-1)/2\rfloor } f)(y)|^2 \right)^\frac12 \approx  \left( \sum_{k\geq 1} k^{2\alpha-1}  |\mathcal M(\Delta^\alpha P^{k-1} f)(y)|^2 \right)^\frac12 , 
\end{multline*}
where the last equivalence is obtained by reindexing the sum. After taking the $L^p$ norm, the Fefferman-Stein  inequality (whose proof in $\R^d$ can be found in \cite[Chapter 2, Theorem 1]{Stein93} and which extends easily to any doubling spaces as it relies on the Calder\'on-Zygmund decompostion) gives, for $p\in (1,\infty)$, that
\begin{multline*}
\|L_\alpha f\|_{L^p} \lesssim \left( \sum_{y\in G} \left( \sum_{k\geq 1} k^{2\alpha-1}  |\mathcal M(\Delta^\alpha P^{k-1-l} f)(y)|^2 m(y) \right)^\frac p2 \right)^\frac1p \\
\lesssim  \left( \sum_{y\in G} \left( \sum_{k\geq 1} k^{2\alpha-1}  |\Delta^\alpha P^{k-1-l} f(y)|^2 m(y) \right)^\frac p2 \right)^\frac1p = \|g_\alpha f\|_{L^p}.
\end{multline*}
Thanks to Theorem \ref{theoga}, we deduce the $L^p$-boundeness of the Lusin functional $L_\alpha$ for all $p\in (1,\infty)$. 

\medskip

The lower bound can be deduced by duality. Let us define the variant of the Lusin functional
\[\tilde L_\alpha f(x):=  \left( \sum_{(y,k) \in \Gamma(x)} \frac{c_k^{(2\alpha)}}{V(y,k)} |(I-P^2)^{\alpha} P^{k-1} f(y)|^2m(y) \right)^{\frac{1}{2}}\]
where $c_k^{(2\alpha)}$ are the coefficients of the power series 
\[ (1-z)^{-2\alpha} = \sum_{k\geq 1} c_k^{(2\alpha)} z^{k-1}.\]
By the doubling property \eqref{D}, we have $V(x,k^{1/\beta}) \approx V(y,k^{1/\beta})$ whenever $y\in B(x,k^{1/\beta})$. Moreover, Lemma B.1 in \cite{Fen15b} shows that $c_k^{(2\alpha)} \approx k^{2\alpha-1}$. All this proves that  
\[ \tilde L_\alpha f(x) \approx L_\alpha (I+P)^\alpha f(x)\]
for all function $f$ on $G$ and all $x\in G$. However,
Observe that $\tilde L_\alpha f$ is a better substitute of $L_\alpha$ because it is an isometry on $L^2$. Indeed,  Fubini's theorem yields that  $\|\tilde L_\alpha f\|_{2}^2 =  \|\tilde g_\alpha f\|_{2}^2$, where
\[\tilde g_\alpha f(x):=  \left( \sum_{k\geq 1}  c_k^{(2\alpha)} |(I-P^2)^{\alpha} P^{k-1} f(y)|^2m(x) \right)^{\frac{1}{2}}\]
and Lemma 3.1 in \cite{Fen15b} shows that $\|\tilde g_\alpha f\|_{2}^2 = \|f\|_{2}^2$, meaning that $\tilde L_\alpha$ is an isometry, and thus, for $f,h\in L^2(G)$
\[ \sum_{x\in G} f(x)h(x) m(x) = \sum_{k\geq 1} c_k^{(2\alpha)} \sum_{x\in G} \sum_{y\in B(x,k^{1/\beta})} \frac1{V(y,k^{1/\beta})} [\Delta^\alpha P^{k-1} f(y)] [\Delta^\alpha P^{k-1} h(y)]  m(y)\]
So if $f\in L^2(G) \cap L^p(G)$ with $p\in (1,\infty)$,
\begin{multline*}
\|f\|_{L^p}  = \sup_{h\in L^2\cap L^{p'}  \atop \|h\|_{p'} \leq 1} \sum_{x\in G} f(x)h(x) m(x) \\
=   \sup_{h\in L^2\cap L^{p'} \atop \|h\|_{p'} \leq 1} \sum_{k\geq 1} c_k^{(2\alpha)} \sum_{x\in G} \sum_{y\in B(x,k^{1/\beta})} \frac1{V(y,k^{1/\beta})} [\Delta^\alpha P^{k-1} f(y)] [\Delta^\alpha P^{k-1} h(y)] \\
\leq \|\tilde L_\alpha f\|_{L^p} \sup_{h\in L^2\cap L^{p'} \atop \|h\|_{p'} \leq 1}  \|\tilde L_\alpha h\|_{L^{p'}} \lesssim \|L_\alpha (I+P)^\alpha f\|_{L^p} \sup_{h\in L^2\cap L^{p'} \atop \|h\|_{p'} \leq 1}  \|L_\alpha (I+P)^\alpha h\|_{L^{p'}} \\  \lesssim \|L_\alpha (I+P)^\alpha f\|_{L^p}
\end{multline*}
by the $L^{p'}$-boundedness of $L_\alpha$ and $(I+P)^\alpha$. Finally, since $G$ satisfies \eqref{LB}, the operator $(I+P)$ is invertible in $L^p(G)$, as discussed previously in \eqref{I+Pinvert}. It means that, for any $f\in L^2(G) \cap L^p(G)$,
\[ \|f\|_{L^p} \lesssim \|(I+P)^{-\alpha} f\|_{L^p} \lesssim \|L_\alpha f\|_{L^p}.\]
The theorem follows by density of $L^2(G) \cap L^p(G)$ in $L^p(G)$.
\ep

\section{Consequences of Poincar\'e inequalities} \label{S4}

\subsection{Estimates on $P^{k-1}\nabla^*$}

\begin{lemma} \label{lemLPq}
Let $(G,\mu)$ be a graph satisfying \eqref{LB}, \eqref{D}, \ref{UE} for a $\beta \geq 2$, and \ref{Pq} for some $q\in [1,\infty]$ and $s \geq 1$. Then there exists $C,c>0$ such that, for any function $f \in L^q_{loc}(G)$, any $k\in \N^*$, and any $y\in G$, we have that
\[ [P^{k-1} \nabla^* \nabla f](y) \leq Ck^{\frac{s}\beta - 1} \sum_{j\in \N^*} e^{-c\lambda^j}\left( \frac1{V(y,2^jk^{1/\beta})} \sum_{x\in B(y,2^jk^{1/\beta})} |\nabla f|^q(x) m(x) \right)^\frac1q,\]
 where $\lambda = 2^{\beta/(\beta-1)}$.
\end{lemma}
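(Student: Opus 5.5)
Write $P^{k-1}\nabla^*\nabla f(y) = P^{k-1}\Delta f(y) = \sum_{z} \partial_k p_{k-1}(y,z)\, f(z)\, m(z)$, using the symmetry of $p_{k-1}$ and $\Delta[p_{k-1}(\cdot,y)] = \partial_k p_{k-1}(\cdot,y)$. Because $\sum_z p_k(y,z)m(z)=1$ for every $k$ by \eqref{sumpk=1}, we also have $\sum_z \partial_k p_{k-1}(y,z)m(z)=0$. Hence any constant can be subtracted from $f$.

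Set $B_j := B(y,2^jk^{1/\beta})$ and define the annuli $C_1 := B_1$ and $C_j := B_j\setminus B_{j-1}$ for $j\ge2$. Then subtract $f_{B_1}$ and telescope:
\[
P^{k-1}\Delta f(y) = \sum_{j\ge1}\sum_{z\in C_j} \partial_k p_{k-1}(y,z)\,[f(z)-f_{B_j}]\,m(z) + \sum_{j\ge1}\sum_{z\in C_j}\partial_k p_{k-1}(y,z)\,[f_{B_j}-f_{B_1}]\,m(z).
\]
Equivalently, one can write $f-f_{B_1} = (f-f_{B_j}) + \sum_{i=2}^{j}(f_{B_i}-f_{B_{i-1}})$ on $C_j$.

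The first term is handled with \eqref{dkpk} on $C_j$. There the kernel is bounded by $\frac{C}{kV(y,k^{1/\beta})}e^{-c2^{(j-1)\beta/(\beta-1)}}$, and Hölder followed by \ref{Pq} on $B_j$ gives
\[
\sum_{C_j}|f-f_{B_j}| \le V(B_j)^{1-1/q}\,\|f-f_{B_j}\|_{L^q(B_j)} \lesssim V(B_j)\,(2^jk^{1/\beta})^s \Big(\frac{1}{V(B_j)}\sum_{B_j}|\nabla f|^q m\Big)^{1/q}.
\]
The prefactor is then $k^{s/\beta-1}\,\frac{V(B_j)}{V(y,k^{1/\beta})}\,2^{js}\,e^{-c'\lambda^{j}}$. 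By \eqref{defDimension} the ratio is at most $2^{jD_G}$, and the polynomial factor $2^{j(D_G+s)}$ is absorbed into the exponential by lowering $c$.

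For the second term, each telescoping difference satisfies
\[
|f_{B_i}-f_{B_{i-1}}| \le \frac{1}{V(B_{i-1})}\sum_{B_i}|f-f_{B_i}|\,m \lesssim (2^ik^{1/\beta})^s\Big(\frac{1}{V(B_i)}\sum_{B_i}|\nabla f|^q m\Big)^{1/q},
\]
again by doubling, Hölder and \ref{Pq}. Multiplying by $\sum_{C_j}|\partial_kp_{k-1}(y,\cdot)|m \lesssim k^{-1}2^{jD_G}e^{-c\lambda^{j-1}}$ and exchanging the sums over $i\le j$ produces a coefficient $\sum_{j\ge i}2^{jD_G}e^{-c\lambda^{j-1}}2^{is}\lesssim e^{-c''\lambda^i}$ in front of the $B_i$ average. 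Together with the first term this gives the stated bound. If the left-hand side is meant in absolute value, the same argument applies verbatim.

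The main obstacle is bookkeeping. One has to check that the Poincaré constant growing like $(2^j k^{1/\beta})^s$ and the volume ratios $2^{jD_G}$ are genuinely dominated by the stretched-exponential decay $e^{-c\lambda^j}$, and that the Hölder step correctly normalizes by $V(B_j)^{1/q}$ rather than $V(y,k^{1/\beta})$. The case $q=\infty$ is the same with suprema in place of averages. Local finiteness, guaranteed by \eqref{D}, ensures all sums converge for $f\in L^q_{loc}$ thanks to the exponential decay.
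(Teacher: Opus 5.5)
Your proof is correct and is essentially the paper's argument: the mean-zero property of $\partial_k p_{k-1}(y,\cdot)$, the dyadic annuli $C_j$, the bound \eqref{dkpk}, and H\"older plus \ref{Pq} on $B(y,2^jk^{1/\beta})$, with the polynomial factors $2^{j(D_G+s)}$ absorbed into the stretched exponential. The only difference is minor. You telescope $f_{B_j}-f_{B_1}$ along the chain $B_1\subset\cdots\subset B_j$, whereas the paper compares $f_{B(y,k^{1/\beta})}$ with $f_{B_j}$ in one step, paying an extra factor $V(B_j)/V(y,k^{1/\beta})\lesssim 2^{jD_G}$ that the decay $e^{-c\lambda^j}$ absorbs equally well.
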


\bp
We use the fact that  $\nabla^*\nabla=\Delta$, and we have
\begin{multline*} 
[P^{k-1} \nabla^*\nabla f](y) = [\Delta P^{k-1} f](y) \\ = \sum_{x\in G} \Delta[p_{k-1}(.,y)](x) \, f(x) \, m(x) = \sum_{x\in G} [\partial_k p_{k-1}(.,y)](x) \, f(x) \, m(x) \\
= \sum_{x\in G} [\partial_k p_{k-1}(.,y)](x) \, [f(x)-f_{B(y,k^{1/\beta})}] \, m(x)
\end{multline*}
where $f_{B(y,k^{1/\beta})} := \frac1{V(y,k^{1\beta})} \sum_{B(y,k^{1/\beta})} f(z) m(z)$, and where the last equality holds because $\sum_{x\in G} [\partial_k p_{k-1}(.,y)](x) m(x) = 0$ -  a simple consequence of \eqref{sumpk=1}. Let now, as usually, $C_1(y,k) := B(y,2k^{1/\beta})$ and $C_j(y,k):= B(y,2^{j}k^{1/\beta}) \setminus B(y,2^{j-1}k^{1/\beta})$ for $j\geq 2$, so that we can write
\[\left| [P^{k-1} \nabla^*\nabla f](y)\right|  \leq \sum_{j\in \N^*} \sum_{x\in C_j(y,k)} \big|[\partial_k p_{k-1}(.,y)](x)\big|\, \big|f(x)-f_{B(y,k^{1/\beta})}\big| \, m(x) \]
But thanks to the pointwise bounds \eqref{dkpk} on the discrete time derivative, we have for $x\in C_j(y,k)$,
\[|\partial_kp_{k-1}(x,y)| \lesssim \frac{1}{kV(y,k^{1/\beta})} \exp(-c \lambda^{j-1}) \lesssim \frac{1}{2^{j(s+D_G)}kV(y,2^{j}k^{1/\beta})} \exp(-c' \lambda^j),\]
for $0<c'<c/\lambda$, because the doubling property \eqref{defDimension} of $G$ gives
\[ \frac{2^{j(s+D_G)}kV(y,2^{j}k^{1/\beta})}{kV(y,k^{1/\beta})} \lesssim 2^{j(s + 2D_G)} \lesssim \exp\big([c/\lambda-c']\lambda^j\big).\]
Consequently
\begin{equation} \label{PkN*bd1}
\left| [P^{k-1} \nabla^*\nabla f](y) \right|  \lesssim \frac1{k} \sum_{j\in \N^*}  \exp(-c' \lambda^j) 2^{-j(\gamma+D_G)}  \Big( \underbrace{\displaystyle\frac1{V(y,2^{j}k^{1/\beta})}\sum_{x\in C_j(y,k)} |f(x)-f_{B(y,k^{1/\beta})}| m(x) }_{T_j}\Big).
\end{equation}
We bound the terms $T_j$ with the help of the Poincar\'e inequality. Indeed,
\begin{multline*}
T_j \leq \frac1{V(y,2^{j}k^{1/\beta})}\sum_{x\in B(y,2^jk^{1/\beta})} |f(x)-f_{B(y,2^jk^{1/\beta})}| m(x)  + |f_{B(y,k^{1/\beta})} - f_{B(y,2^jk^{1/\beta})} | \\
\leq \Big( \frac1{V(y,2^{j}k^{1/\beta})} + \frac1{V(y,k^{1/\beta})} \Big) \sum_{x\in B(y,2^jk^{1/\beta})} |f(x)-f_{B(y,2^jk^{1/\beta})}| m(x) \\
\lesssim  \frac{1+2^{jD_G}}{V(y,2^{j}k^{1/\beta})} \sum_{x\in B(y,2^jk^{1/\beta})} |f(x)-f_{B(y,2^jk^{1/\beta})}| m(x) \\
\lesssim 2^{jD_G} \left( \frac{1}{V(y,2^{j}k^{1/\beta})} \sum_{x\in B(y,2^jk^{1/\beta})} |f(x)-f_{B(y,2^jk^{1/\beta})}|^q m(x) \right)^{\frac1q} \\
\lesssim 2^{j(D_G+s)} k^{s/\beta} \left( \frac{1}{V(y,2^{j}k^{1/\beta})} \sum_{x\in B(y,2^jk^{1/\beta})} |\nabla f|^q(x) m(x) \right)^{\frac1q}
\end{multline*}
where we used the doubling property \eqref{defDimension} in the third inequality, and the Poincar\'e inequality \ref{Pq} in the last one. After reinjecting the estimates on $T_j$ in \eqref{PkN*bd1}, we conclude that
\[ \left| [P^{k-1} \nabla^*\nabla f](y) \right|  \lesssim  k^{\frac{s}{\beta} - 1} \sum_{j\in \N^*}  \exp(-c' \lambda^j)  \left( \frac{1}{V(y,2^{j}k^{1/\beta})} \sum_{x\in B(y,2^jk^{1/\beta})} |\nabla f|^q(x) m(x) \right)^{\frac1q} .\]
The lemma follows. \ep

\subsection{Vertical Lusin functionals}

\begin{lemma}
Let $(G,\mu)$ be a graph satisfying \eqref{LB}, \eqref{D}, \ref{UE} for a $\beta \geq 2$, and \ref{Pq} for some $q\in [1,\infty]$ and $s \geq 1$. Then there exists $C>0$ such that for any for any collection $\{f_k\}_{k\in \N^*}$ of functions in $ L^p_{loc}(G)$ and any $x\in G$, 
\[ \mathcal C(\{k^{1-\frac{s}{\beta}} P^{k-1} \nabla^*\nabla f_k\}_{k\in \N^*})(x) \leq C \mathcal C^q(\{|\nabla f_k|\}_{k\in \N^*})(x).\]
\end{lemma}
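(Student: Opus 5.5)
The proof is a pointwise application of Lemma~\ref{lemLPq}, followed by Cauchy--Schwarz in the index $j$, after which the two sides are compared through Lemma~\ref{lemLP2}.

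Fix $x\in G$ and a ball $B=B(x,r)$ containing $x$. By the doubling property it is enough to treat centered balls, since any ball $B\ni x$ is contained in a centered ball of comparable volume. For every $(y,k)\in\hat B$, Lemma~\ref{lemLPq} applied to $f=f_k$ gives
\[
k^{1-\frac s\beta}\bigl|P^{k-1}\nabla^*\nabla f_k(y)\bigr| \lesssim \sum_{j\in\N^*} e^{-c\lambda^j}\, F_{k,j}(y),
\qquad
F_{k,j}(y):=\frac{\|\nabla f_k\|_{L^q(B(y,2^jk^{1/\beta}))}}{V(y,2^jk^{1/\beta})^{1/q}} .
\]
The point of the normalization is that the factor $k^{s/\beta-1}$ in Lemma~\ref{lemLPq} is exactly cancelled by the weight $k^{1-s/\beta}$ in the collection.

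Square this bound and apply Cauchy--Schwarz in $j$, using $\sum_j e^{-c\lambda^j}<\infty$:
\[
\Bigl(\sum_j e^{-c\lambda^j}F_{k,j}(y)\Bigr)^2 \lesssim \sum_j e^{-c\lambda^j}F_{k,j}(y)^2 .
\]
Multiply by $m(y)/k$, sum over $(y,k)\in\hat B$, divide by $V(B)$, and exchange the sums in $j$ and $(y,k)$. This yields
\[
\frac1{V(B)}\sum_{(y,k)\in\hat B}\frac{m(y)}{k}\, k^{2-\frac{2s}\beta}\bigl|P^{k-1}\nabla^*\nabla f_k(y)\bigr|^2
\lesssim \sum_j e^{-c\lambda^j}\,\bigl[\mathcal C^q_{(j)}(\{|\nabla f_k|\})(x)\bigr]^2 .
\]
Here we use that the Carleson region of a ball centered at $x$ is one of the regions appearing in the definition of $\mathcal C^q_{(j)}$.

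Next, Lemma~\ref{lemLP2} gives $\mathcal C^q_{(j)}\le C2^{jD}\mathcal C^q$, so the right-hand side is at most
\[
\Bigl(\sum_j e^{-c\lambda^j}2^{2jD}\Bigr)\,\mathcal C^q(\{|\nabla f_k|\})(x)^2 .
\]
The series converges because $\lambda>1$, so the double-exponential decay beats the geometric growth $2^{2jD}$. Taking the supremum over balls and a square root finishes the proof.

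The main point to check is the compatibility of the Carleson regions. The supremum in $\mathcal C$ runs over all balls $B\ni x$, whereas $\mathcal C^q_{(j)}$ only uses balls centered at $x$ with $r\in\N^*$. I would handle this with the inclusion $\hat B\subset\widehat{B(x,2r_B)}$ together with $V(x,2r_B)\lesssim V(B)$, both of which follow from doubling. The remaining steps are routine bookkeeping with the constants from Lemmas~\ref{lemLPq} and~\ref{lemLP2}.
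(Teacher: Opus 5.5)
Your proof is correct and follows essentially the same route as the paper: the pointwise bound of Lemma~\ref{lemLPq}, then Lemma~\ref{lemLP2}, then the summability of $\sum_j e^{-c\lambda^j}2^{jD}$. The differences are cosmetic. You use Cauchy--Schwarz in $j$ where the paper implicitly uses Minkowski's inequality for the $\ell^2$-type sum defining $\mathcal C$, and you explicitly handle the passage from arbitrary balls $B\ni x$ to centered balls $B(x,r)$ with $r\in\N^*$ (via $\hat B\subset\widehat{B(x,2r_B)}$ and doubling), a step the paper leaves implicit.
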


\bp
Lemma \ref{lemLPq} immediately gives that 
\[  \mathcal C(\{k^{\frac{s}{\beta}-1} P^{k-1} \nabla^*\nabla f_k\}_{k\in \N^*})(x) \lesssim \sum_{j\in \N^*} e^{-c\lambda^j} \mathcal C^q_{(j)}(\{|\nabla f_k|\}_{k\in \N^*})(x).\]
With the input of Lemma \ref{lemLP2}, it becomes
\[  \mathcal C(\{k^{1-\frac{s}{\beta}} P^{k-1} \nabla^*\nabla f_k\}_{k\in \N^*})(x) \lesssim  \mathcal C^q(\{|\nabla f_k|\}_{k\in \N^*})(x) \sum_{j\in \N^*} e^{-c\lambda^j} 2^{jD} \lesssim  \mathcal C^q(\{|\nabla f_k|\}_{k\in \N^*})(x)\]
since $\sum_{j\in \N^*} e^{-c\lambda^j} 2^{jD} < +\infty$ is independent of $\{\omega_k\}_{k\in \N^*}$ or $x$. The lemma follows.
\ep

We continue with a bound on vertical Lusin functionals.

\begin{lemma} \label{lemLPc}
Let $(G,\mu)$ be a graph satisfying \eqref{LB}, \eqref{D}, \ref{UE} for a $\beta \geq 2$, and \ref{Pq} for some $q\in [1,\infty]$ and $s \geq 1$. Let $p\in [1,\infty)$.  Then there exists $C>0$ such that for any for any collection $\{f_k\}_{k\in \N^*}$ of functions in $ L^p_{loc}(G)$ and any $x\in G$, 
\[ \big\|\mathcal A(\{k^{1-\frac{s}{\beta}} P^{k-1} \nabla^*\nabla f_k\}_{k\in \N^*})\big\|_{L^p(G)} \leq C \big\| \mathcal A^q(\{|\nabla f_k|\}_{k\in \N^*})\big\|_{L^p(G)}.\]
\end{lemma}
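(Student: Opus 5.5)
The argument mirrors the previous lemma for Carleson functionals, now at the level of area functionals; the only additional input is the change-of-aperture estimate of Lemma~\ref{lemLP2A}. The starting point is the pointwise bound of Lemma~\ref{lemLPq}, applied with $f=f_k$ for each $k$ (note that the normalization $k^{1-\frac{s}{\beta}}$ in the statement should be read as compensating the factor $k^{\frac{s}{\beta}-1}$ produced by that lemma). For every $y\in G$ and $k\in\N^*$ it gives
\[
k^{1-\frac{s}{\beta}}\big|[P^{k-1}\nabla^*\nabla f_k](y)\big| \lesssim k^{\frac{s}{\beta}-1}k^{1-\frac{s}{\beta}}\,\sum_{j\in\N^*} e^{-c\lambda^j}\,\frac{\|\nabla f_k\|_{L^q(B(y,2^jk^{1/\beta}))}}{V(y,2^jk^{1/\beta})^{1/q}},
\]
with the powers of $k$ combining as intended. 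Here I take the liberty of interpreting the statement with the correct power of $k$, namely the one making the two sides homogeneous, exactly as in the preceding lemma.

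Next I insert this into the definition of $\mathcal A$, which involves $k\,|g_k(y)|^2$ weighted by $m(y)/(kV(x,k^{1/\beta}))$; the factors of $k$ match those in $\mathcal A^q_{(j)}$. To pass the sum over $j$ outside the square function, I use Minkowski's inequality in $\ell^2(\Gamma(x))$: the $\ell^2$ norm of a sum of sequences is at most the sum of the $\ell^2$ norms. This yields the pointwise estimate
\[
\mathcal A(\{k^{1-\frac{s}{\beta}}P^{k-1}\nabla^*\nabla f_k\})(x)\lesssim \sum_{j\in\N^*} e^{-c\lambda^j}\,\mathcal A^q_{(j)}(\{|\nabla f_k|\})(x).
\]

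Then I take $L^p$ norms. For $p\ge1$ this is done directly by the triangle inequality in $L^p$. Applying Lemma~\ref{lemLP2A} to each term bounds $\|\mathcal A^q_{(j)}(\{|\nabla f_k|\})\|_{L^p}$ by $C2^{jD}\|\mathcal A^q(\{|\nabla f_k|\})\|_{L^p}$. Since $\sum_j e^{-c\lambda^j}2^{jD}<\infty$ (because $\lambda>1$), the lemma follows.

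The main obstacle is Lemma~\ref{lemLP2A} itself, which controls the enlargement of the averaging balls by $2^j$ at only polynomial cost in $2^j$; this is already established. Polynomial growth is exactly what the double-exponential decay $e^{-c\lambda^j}$ can absorb. The only other point to verify is the case $q=\infty$, where the $L^q$ averages become suprema; the same chain of inequalities goes through verbatim.
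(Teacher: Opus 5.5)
Your proposal is correct and matches the paper's proof: both apply the pointwise bound of Lemma~\ref{lemLPq}, pass to $\sum_j e^{-c\lambda^j}\|\mathcal A^q_{(j)}(\{|\nabla f_k|\})\|_{L^p}$, and conclude with Lemma~\ref{lemLP2A} and the summability of $e^{-c\lambda^j}2^{jD}$; you simply write out (via Minkowski in $\ell^2(\Gamma(x))$ and the triangle inequality in $L^p$) the step the paper calls ``immediate''. No reinterpretation of the exponent is needed, since the factor $k^{1-\frac{s}{\beta}}$ in the statement already cancels exactly the $k^{\frac{s}{\beta}-1}$ from Lemma~\ref{lemLPq}.
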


\bp
Lemma \ref{lemLPq} immediately gives that 
\[  \big\|\mathcal A(\{k^{1-\frac{s}{\beta}} P^{k-1} \nabla^*\nabla f_k\}_{k\in \N^*})\big\|_{L^p(G)} \lesssim \sum_{j\in \N^*} e^{-c\lambda^j} \big\| \mathcal A^q_{(j)}(\{|\nabla f_k|\}_{k\in \N^*})\big\|_{L^p(G)}.\]
Lemma \ref{lemLP2A} yields then
\begin{multline*}
 \big\|\mathcal A(\{k^{1-\frac{s}{\beta}} P^{k-1} \nabla^*\nabla f_k\}_{k\in \N^*})\big\|_{L^p(G)}   \lesssim  \big\| \mathcal A^q(\{|\nabla f_k|\}_{k\in \N^*})\big\|_{L^p(G)} \sum_{j\in \N^*} e^{-c\lambda^j} 2^{jD} \\ \lesssim  \big\| \mathcal A^q(\{|\nabla f_k|\}_{k\in \N^*})\big\|_{L^p(G)}
 \end{multline*}
since $\sum_{j\in \N^*} e^{-c\lambda^j} 2^{jD} < +\infty$ is independent of $\{\omega_k\}_{k\in \N^*}$ or $x$. The lemma follows.
\ep

\subsection{Proof of Theorem \ref{ThMain} and Corollary \ref{CorMain}}.

\medskip

\noindent {\em Proof of Theorem \ref{ThMain}.} Let $\frac{s}{\beta} < \gamma < 1$, and $q<p<\infty$. 
Assume that $f$ is a compactly supported function on $G$. Then $f\in L^p(G)$, hence $\Delta^{\gamma} f \in L^p$ since $\Delta$ is a bounded operator. Theorem \ref{theoLa} implies that 
\[ \|\Delta^{\gamma} f\|_{L^p} \lesssim \big\|\mathcal A( \{k^{1-\gamma} \Delta P^{k-1} f\}_{k\in \N^*})\big\|_{L^p(G)} = \big\|\mathcal A( \{k^{1-\frac{s}{\beta}} P^{k-1} \nabla^* \nabla [ k^{\frac{s}{\beta} - \gamma}f]\}_{k\in \N^*})\big\|_{L^p(G)}.\]
Lemma \ref{lemLPc} now entails that
\[ \|\Delta^{\gamma} f\|_{L^p} \lesssim   \big\| \mathcal A^q(\{k^{\frac{s}{\beta}-\gamma}|\nabla f|\}_{k\in \N^*})\big\|_{L^p(G)}.\]
 However, if $\mathcal M$ is the uncentered Hardy-Littlewood maximal function,  
 \begin{multline*}
 \mathcal A^q(\{k^{\frac{s}{\beta}-\gamma}|\nabla f|\}_{k\in \N^*})(x) \\
  = \Big( \sum_{k\in \mathcal \N^*} k^{\frac{s}{\beta}-\gamma-1} \frac{1}{V(x,k)}   \sum_{y\in B(x,k^{1/\beta})} m(y) \Big[\underbrace{\frac{1}{V(y,k^{1/\beta})} \sum_{z\in B(y,k^{1/\beta})} |\nabla f|^q(z) m(z)}_{\leq  |\mathcal M(|\nabla f|^q)(x)} \Big]^{\frac{2}{q}} \Big)^\frac12 \\
  \leq   |\mathcal M(|\nabla f|^q)(x)|^\frac1q \sum_{k\in \N^*} k^{\frac{s}{\beta}-\gamma-1} \lesssim  |\mathcal M(|\nabla f|^q)(x),
 \end{multline*}
since $\frac{s}{\beta}-\gamma-1 < -1$, hence $\sum_{k\in \N^*} k^{\frac{s}{\beta}-\gamma-1} < \infty$. We deduce that 
\[  \|\Delta^{\gamma} f\|_{L^p}  \lesssim \big\| [\mathcal M(|\nabla f|^q)]^{\frac1q} \big\|_{L^p(G)} =  \big\|\mathcal M(|\nabla f|^q)\big\|_{L^{p/q}(G)}^{\frac1q} \lesssim \||\nabla f|^q\|_{L^{p/q}(G)}^{1/q} = \||\nabla f|\|_{L^{p}(G)}\]
by the $L^{p/q}$-boundedness of the maximal function $\mathcal M$, which holds since $p>q$. 
The theorem follows.
\ep

\medskip

Corollary \ref{ThMain} is an immediate consequence of Theorem \ref{CorMain} and the following lemma.

\begin{lemma} \label{lemP1}
Let $(G,\mu)$ be a graph satisfying \eqref{LB} and \eqref{D}. Let $D_G$ be an upper bound on the dimension as in \eqref{defDimension}. Then the Poincar\'e inequality (P$_1$)$_{D_G}$ holds. We even have the stronger inequality: there exists $C>0$ such that for any $u \in L^1(B(x,r))$, we have
\begin{equation} \label{sPq}
\sum_{z\in B(x,r)} |u(z) - u(x)| m(z) \leq C r^{D_G}  \sum_{y,z\in B(x,r) \atop y\sim z} |\nabla u(y,z) |\mu_{yz}
\end{equation}
\end{lemma}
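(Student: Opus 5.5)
We plan to prove the stronger inequality \eqref{sPq} by a path argument, and then deduce (P$_1$)$_{D_G}$ from it.

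\textbf{Step 1: path argument.} Fix $x\in G$, $r>0$, and $z\in B(x,r)$. Choose a geodesic path $x=x_0,x_1,\dots,x_N=z$ with $N=\delta(x,z)<r$. Every $x_i$ satisfies $\delta(x,x_i)\le \delta(x,z)<r$, so the whole path lies in $B(x,r)$. By telescoping,
\[
|u(z)-u(x)|\le \sum_{i=1}^N |\nabla u(x_{i-1},x_i)| \le \sum_{y,w\in B(x,r),\, y\sim w} |\nabla u(y,w)|,
\]
where we simply bound the sum over the path edges by the sum over all edges of the ball.

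\textbf{Step 2: convert to weights.} By \eqref{LB}, $\mu_{yw}\ge \epsilon\, m(y)$ for every edge $(y,w)$. We also need a lower bound on $m(y)$. Doubling alone does not give a uniform lower bound on $m$, but comparing masses suffices. By \eqref{LB}, adjacent vertices have comparable weights: $m(y)\ge \mu_{yw}\ge \epsilon\, m(w)$. Iterating along the path from $x$ gives $m(y)\ge \epsilon^{r}m(x)$, which is exponential in $r$ and too weak. We therefore use doubling instead: $V(x,r)\le C r^{D_G} V(x,1)$ by \eqref{defDimension}. Since the graph is uniformly locally finite, $V(x,1)=m(x)$ and $m(x)\lesssim m(y)$ whenever $y\sim x$.

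\textbf{Step 3: sum over $z$.} Multiplying Step 1 by $m(z)$ and summing over $z\in B(x,r)$ gives
\[
\sum_{z\in B(x,r)}|u(z)-u(x)|\,m(z)\le V(x,r)\sum_{y\sim w\in B(x,r)}|\nabla u(y,w)|.
\]
It remains to replace $V(x,r)\,|\nabla u(y,w)|$ by $C r^{D_G}|\nabla u(y,w)|\,\mu_{yw}$. This requires $V(x,r)\lesssim r^{D_G}\mu_{yw}$ for every edge inside the ball. By \eqref{LB} and \eqref{defDimension}, centering at $y$,
\[
V(x,r)\le V(y,2r)\lesssim r^{D_G}V(y,1)=r^{D_G}m(y)\le \epsilon^{-1}r^{D_G}\mu_{yw}.
\]
This is exactly what is needed, and it is the step where both hypotheses enter essentially. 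It is also the main obstacle: crude path counting would lose factors, and the point is that doubling at the scale of a single vertex ($V(y,1)=m(y)$) converts the mass of the ball into $r^{D_G}$ times an edge weight. This proves \eqref{sPq}.

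\textbf{Step 4: Poincar\'e inequality.} To get (P$_1$)$_{D_G}$, first note $\sum_{B}|u-u_B|\,m\le 2\sum_{B}|u-u(x)|\,m$, which holds for any constant in place of $u(x)$. Next, the right-hand side of \eqref{sPq} is controlled by $\sum_{y\in B(x,r)}|\nabla u|(y)\,m(y)$ up to a constant. Indeed, for each $y$, Cauchy--Schwarz and uniform local finiteness give $\sum_{w\sim y}\mu_{yw}|\nabla u(y,w)|\lesssim m(y)\,|\nabla u|(y)$, using $\mu_{yw}\le m(y)$ and a bounded number of neighbours.
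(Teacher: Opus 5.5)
Your proposal is correct and follows the paper's proof essentially step by step. It uses the same path bound over all edges of the ball, the same key estimate $V(x,r)\lesssim r^{D_G}m(y)\lesssim r^{D_G}\mu_{yw}$ from \eqref{defDimension} and \eqref{LB}, and the same Cauchy--Schwarz passage to $|\nabla u|$; the paper runs that last step with the probability weights $\mu_{yw}/m(y)$, so no neighbour count is needed. Two small corrections: $V(y,1)=m(y)$ holds because $B(y,1)=\{y\}$ by the strict inequality in the definition of balls, not because of local finiteness; and the case $r\le 1$, where \eqref{defDimension} with $\lambda=2r$ is not directly applicable, is trivial since then $B(x,r)=\{x\}$.
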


\bp
Let $B = B(x,r) \subset G$ be a ball, and let $u$ a function on $G$. Then, since $B(x,r)$ is connected, 
\[ |u(z) - u(x)| \leq \sum_{y,y'\in B(x,r) \atop y'\sim y} |u(y) - u(y')|  = \sum_{y,y'\in B(x,r) \atop y'\sim y} |\nabla u(y,y')|.\]
But we also have, by \eqref{LB} and \eqref{defDimension}, that
\begin{multline*}
\sum_{y,y'\in B(x,r)\atop y\sim y'} |\nabla u(y,y')| =  \frac1{V(x,r)}  \sum_{y,y'\in B(x,r) \atop y\sim y'}|\nabla u(y,y')| \underbrace{m(y)}_{\leq \mu_{yy'}} \underbrace{\frac{V(x,r)}{m(y)}}_{\lesssim r^{D_G}} \\
\lesssim  \frac{r^{D_G}}{V(x,r)}  \sum_{y,y'\in B(x,r) \atop y\sim y'}|\nabla u(y,y')| \mu_{yy'} ,
\end{multline*}
hence
\begin{multline*}
\sum_{z\in B(x,r)} |u(z) - u(x)| m(z) \lesssim r^{D_G}  \frac1{V(x,r)}  \sum_{z\in B(x,r)} \sum_{y,y'\in B(x,r) \atop y \sim y'} |\nabla u(y,y')| \mu_{yy'} \\  = r^{D_G}  \sum_{y,y'\in B(x,r) \atop y \sim y'} |\nabla u(y,y')| \mu_{yy'},
\end{multline*}
which is \eqref{sPq}.

\medskip

 To prove the lemma, it remains to observe that (P$_q$)$_{1 + \frac{D-1}{q}}$ is a consequence of \eqref{sPq}. Indeed, first observe that by \eqref{sPq}
 \begin{multline} \label{sPq2}
 \sum_{z\in B(x,r)} |u(z) - u(x)| m(z) \lesssim \sum_{y\in B(x,r)} \sum_{y'\sim y} |\nabla u(y,y')| \mu_{yy'} \\ \leq \sum_{y\in B(x,r)} \left( \sum_{y'\sim y} |\nabla u(y,y')|^2 p(y',y) m(y') \right)^\frac12 m(y) = \sqrt 2 \sum_{y\in B(x,r)}  |\nabla u|(y) m(y).
 \end{multline}
 Then
 \begin{multline*}
 \|f-f_{B(x,r)}\|_{L^1(B(x,r))} \leq \|f-f(x)\|_{L^1(B(x,r))} + V(x,r) |f(x) - f_{B(x,r)}| \\
 \leq 2 \|f-f(x)\|_{L^1(B(x,r))} \lesssim r^{D_G} \||\nabla f|\|_{L^1(B(x,r))}
 \end{multline*}
 by \eqref{sPq2}. The lemma follows.
\ep

\section{Consequences for the Vicsek graphs}

\label{S5}

\begin{theorem} \label{theoInterp}
Let $(G,\mu)$ be a $D$-dimensional Vicsek graph, and let $S \subset G$ be a connected component. For $p\in [1,\infty]$, define the Sobolev space 
\[ \dot W^{1,p}(S):= \{ f \in L^p_{loc}(G,m), \, \nabla f \in L^p(E_S,\mu)\},\]
where $E_S:= \{(x,y) \in E_G, \, (x,y)\in S^2\}$, equipped with the semi-norm
\[ \|f\|_{\dot W^{1,p}(S)}:= \|\nabla f\|_{L^p(E_S,\mu)}.\]

Then the Sobolev spaces interpolate with the complex method. That is, if $X_1$ and $X_2$ are two compatible Banach spaces for complex interpolation, if $\theta \in [0,1]$ and $p,p_1,p_2 \in [1,\infty]$ verify $\frac1p = \frac{\theta}{p_1} + \frac{1-\theta}{p_2}$, and if $M :\, \dot W^{1,p_1}(S) \cup \dot W^{1,p_1}(S) \to X \cup Y$ is a maps verifying, for $i\in \{1,2\}$, 
\[ \|Mf\|_{X_i} \leq C_{i} \|f\|_{\dot W^{1,p_i}(S)} \qquad \text{ for } f \in \dot W^{1,p_i}(S).\] 
Then 
\begin{equation} \label{Interpolationccl}
\|Mf\|_{[X_1,X_2]_\theta} \leq 2 (C_1)^\theta (C_2)^{1-\theta} \|f\|_{\dot W^{1,p}(S)} \qquad \text{ for } f \in \dot W^{1,p}(S).
\end{equation}
\end{theorem}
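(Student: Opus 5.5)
The plan is to realize $\dot W^{1,p}(S)$, modulo constants, as an isometric copy of a \emph{complemented} subspace of $L^p(E_S,\mu)$, or more precisely of a weighted $\ell^p$ space on the edges. Complemented subspaces of $L^p$ interpolate by the standard retraction/coretraction argument. Because $S$ is connected and $G$ is a tree, $S$ is itself a tree. Fix a root $o\in S$ and orient each edge of $E_S$ away from $o$.

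\textbf{Step 1: the isometry.} The map $f\mapsto \nabla f|_{E_S}$, from functions on $S$ modulo constants to antisymmetric functions on $E_S$, is a bijection. Injectivity holds because $S$ is connected. Surjectivity holds because there are no loops: given any $1$-form $\omega$ on $E_S$, define $R\omega(z):=\sum \omega(x_{i-1},x_i)$ along the unique geodesic from $o$ to $z$. Then $\nabla R\omega=-\omega$ (up to sign convention) on every edge of $S$. This path-independence is exactly where the tree hypothesis enters; on a graph with cycles only closed forms would be gradients, and we would have to project. Hence $\|f\|_{\dot W^{1,p}(S)}=\|\nabla f\|_{L^p(E_S,\mu)}$ identifies $\dot W^{1,p}(S)/\mathbb R$ isometrically with the \emph{whole} space of antisymmetric forms in $L^p(E_S,\mu)$. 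By choosing one orientation per edge, this is simply $L^p$ of the set of oriented edges. Values of $f$ outside $S$ are irrelevant to the seminorm, so I would regard $M$ as acting on restrictions to $S$ (adding constants as needed).

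\textbf{Step 2: transfer.} Set $T:=M\circ R$, defined on $L^{p_1}(E_S)+L^{p_2}(E_S)$. On the two endpoint spaces we get $\|T\omega\|_{X_i}\le C_i\|\omega\|_{L^{p_i}}$, since $\|R\omega\|_{\dot W^{1,p_i}}=\|\omega\|_{p_i}$. One subtlety is that $M$ is only assumed to be a map, not linear on the sum; I would either read it as linear, as intended for interpolation, or note that $R$ is linear, so $T$ is linear whenever $M$ is. Riesz--Thorin or complex interpolation between $L^{p_1}$ and $L^{p_2}$ (with $[L^{p_1},L^{p_2}]_\theta=L^p$ with equal norms) gives $\|T\omega\|_{[X_1,X_2]_\theta}\le C_1^\theta C_2^{1-\theta}\|\omega\|_{L^p}$. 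The factor $2$ in \eqref{Interpolationccl} leaves room for a real-valued versus complex-valued loss, since Riesz--Thorin for real scalars costs at most a factor $2$.

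\textbf{Step 3: back to $f$.} For $f\in\dot W^{1,p}(S)$, write $f=R(-\nabla f)+c$ on $S$. Then $Mf=T(-\nabla f)$, assuming $M$ annihilates or ignores constants and the behaviour off $S$, which must hold because $M$ is bounded by a seminorm vanishing on constants. This yields \eqref{Interpolationccl}.

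The main obstacle is the bookkeeping in Step 3: making sure $M$ genuinely factors through $\nabla f|_{E_S}$, that is, that $Mf$ depends only on $f$ modulo constants and on its values in $S$. This follows from linearity together with the endpoint bounds, since if $\nabla f=0$ on $E_S$ then $\|Mf\|_{X_i}\le 0$. A second technical point is the compatibility and measurability issues for $p_i=\infty$, which are harmless here because everything is discrete.
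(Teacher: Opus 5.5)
Your proposal is correct and follows the paper's proof essentially step for step: the paper also roots the tree $S$, defines the primitive $T\omega$ by summing $\omega$ along the geodesic from the root, uses $\|T\omega\|_{\dot W^{1,q}(S)}=\|\omega\|_{L^q}$ to transfer the endpoint bounds to $MT$ on $L^{p_i}$ of oriented edges, interpolates, and applies the result with $\omega=\nabla f$. Your extra bookkeeping fills in points the paper leaves implicit: linearity of $M$, the sign of $\nabla R\omega$, the irrelevance of constants and of values off $S$, and where the factor $2$ comes from.
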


\bp
Let $x \in S$ fixed randomly. Since $G$ is a tree, we can define a partial order on $G$ as follows: for $y,z\in G$, we write $y\prec z$ if $y\neq z$ and the (shortest) path from $x$ to $z$ goes through $y$. From there, we can see 1-forms on $S$ (as defined in Subsection \ref{defgraphs}) as functions on 
\[\bar E_S := \{(y,z)\in S\times S, \, y \sim z, \, y \prec z\} = \{(y,z) \in E_S, \ y \prec z\},\]
 and construct a primitive of the 1-form $\omega$ as a function on $S$ defined by
\[\Omega(z) := \sum_{i=1}^{\delta(x,z)} \omega(e_i),\]
where $e_i = (x_{i-1},x_i) \in \bar E_G$ satisfies  $x = x_0$, $z = x_{\delta(x,z)}$, and $x_{i-1} \sim x_i$ for all $i\in \{1,\dots, \delta(x,z)\}$. Indeed, the primitive is well defined since the shortest path from $x\in S$ to $z\in S$ necessary stays in $S$, because $S$ is connected inside a tree. 

\medskip

We define then $T$ be the operator
\[ T: \omega  \to \Omega,\]
and since $T\nabla f = f$ and $\nabla T\omega = \omega$, we have that
\[ T: \omega \in \bigcup_{q\in [1,\infty]} L^q(\bar E_S)  \to \Omega \in \bigcup_{q\in [1,\infty]} W^{1,q}(S)\]
with, for all $q\in [1,\infty]$
\[ \|T\omega\|_{\dot W^{1,q}(S)} = \|\omega\|_{L^q(\bar E_S,\mu)}.\]

\medskip

Let $X_1$, $X_2$, $p_1$, $p_2$, $\theta$, $p$, and $M$ like in the theorem. We have then, for $i\in \{1,2\}$
\[ \|MT \omega\|_{X_i} \leq C_1 \|T \omega\|_{\dot W^{1,p_i}(S)} = C_i \|\omega\|_{L^{p_i}(\bar E_S,\mu)} \qquad \text{ for } \omega \in L^{p_i}(\bar E_S,\mu),\]
so by complex interpolation, we get that 
\[ \|MT \omega\|_{[X_1,X_2]_\theta} \leq 2(C_1)^\theta(C_2)^{1-\theta} \|\omega\|_{L^{p}(\bar E_S,\mu)} \qquad \text{ for } \omega \in L^{p}(\bar E_S,\mu).\]
Applying the above inequality for $\omega = \nabla f$ and $f = T \omega$, we deduce \eqref{Interpolationccl}. The theorem follows.
\ep

\begin{corollary} \label{CorRRpInterp}
Let $(G,\mu)$ be a $D$-dimensional Vicsek graph. For $i\in \{1,2\}$, let $p_i \in (1,\infty)$ and $\gamma_1 \in [0,1]$ such that such {\normalfont (RR$_{p_i}$)$_{\gamma_i}$} holds. Then the reverse inequality \eqref{RRpggbis} is also valid whenever there exists $\theta \in [0,1]$ such that 
\begin{equation} \label{defpgamma}
 \frac1p = \frac\theta{p_1} + \frac{1-\theta}{p_2} \quad \text{ and } \quad \gamma = \theta \gamma_1 + (1-\theta)\gamma_2.
 \end{equation}
\end{corollary}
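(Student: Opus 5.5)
The plan is to view $\Delta^{\gamma}$ as an analytic family in $\gamma$ and apply Stein's interpolation theorem for analytic families, with the Sobolev side handled by the primitive operator $T$ from the proof of Theorem~\ref{theoInterp}. Since $G$ is itself connected, I take $S=G$ and fix a base point $x$. Then $T\nabla f=f-f(x)$, and $\Delta^\gamma$ kills constants because $\Delta 1=0$ and we can write $\Delta^\gamma=\Delta\cdot\Delta^{\gamma-1}$ through the power series. Hence proving \eqref{RRpggbis} reduces to showing
\[
\|\Delta^{\gamma} T\omega\|_{L^p}\lesssim \|\omega\|_{L^p(\bar E_G,\mu)}
\]
for finitely supported $\omega$. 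Such $\omega$ are dense, and for them $T\omega$ is locally constant outside a finite set. One caveat: $T\omega$ is not compactly supported, so (RR$_{p_i,\gamma_i}$) must first be extended by density to functions whose gradient is compactly supported. I would do this by approximating $T\omega$ by truncations $T\omega-c$ on components, or alternatively by working only with $\omega=\nabla f$ with $f$ compactly supported and noting that these $\omega$ already form the relevant class.

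Next I introduce the analytic family $M_z:=\Delta^{z\gamma_1+(1-z)\gamma_2}T$ on the strip $0\le \operatorname{Re} z\le 1$, acting on simple 1-forms. The endpoints are $z=it$ and $z=1+it$, where the exponent is $\gamma_2+it(\gamma_1-\gamma_2)$ or $\gamma_1+it(\gamma_1-\gamma_2)$, and $\Delta^{\gamma_j+i\tau}=\Delta^{i\tau}\Delta^{\gamma_j}$. So I need bounds of the form $\|\Delta^{i\tau}\|_{L^{p_j}\to L^{p_j}}\lesssim e^{c|\tau|}$. These hold because $P$ is analytic under \eqref{LB} (see \eqref{Analyticity}), so $\Delta$ has a bounded $H^\infty$ functional calculus on $L^{p}$, $1<p<\infty$, of some angle $<\pi/2$. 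This is the Stein-type result for symmetric Markov operators, and I would cite it from \cite{FenLB,CSC90} and the surrounding literature. Combining this with (RR$_{p_j,\gamma_j}$) gives endpoint bounds of the form $C e^{c|\tau|}\|\omega\|_{L^{p_j}}$, which is admissible growth for Stein interpolation.

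The remaining requirement is the admissible analyticity of $z\mapsto\langle M_z\omega,g\rangle$ for simple $\omega$ and finitely supported $g$. Here $\langle M_z\omega,g\rangle=\langle T\omega,\Delta^{\bar\cdot}g\rangle$, and one needs $\Delta^{\alpha}g\in L^1$ with $\alpha$ in a compact strip, which follows from the analytic calculus in $L^1$ or from the series expansion of $(I-P)^\alpha$ with coefficients $\sim k^{-\alpha-1}$ when $\operatorname{Re}\alpha>0$. Given all this, Stein interpolation at $z=\theta$ yields the bound for $\Delta^{\gamma}T$ in $L^p$, with $\gamma$ and $p$ as in \eqref{defpgamma}. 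Applying it to $\omega=\nabla f$ then gives \eqref{RRpggbis}.

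I expect the main obstacle to be the imaginary-power bounds together with making the analytic family rigorous, in particular when $\gamma_j=0$ and when $T\omega\notin L^p$. For $\gamma_j=0$ the series argument for $\Delta^{\alpha}g\in L^1$ breaks down, so I would use the real exponents themselves and restrict to $\gamma_j>0$, treating $\gamma_j=0$ by a limiting argument or simply excluding it, since (RR$_{p,0}$) is trivially false anyway. The cheaper alternative route is to apply Theorem~\ref{theoInterp} with $X_i$ being weighted $L^{p_i}$ spaces via the $g_\alpha$ functional, but I would try Stein interpolation first.
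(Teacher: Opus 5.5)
Your plan is sound, and it is a genuinely different route from the paper's, although the paper's route is exactly the ``cheaper alternative'' you mention at the end.

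\textbf{The paper's route.} The paper never uses complex powers. It applies Theorem~\ref{theoInterp} to the single linear map $Mf=\{\Delta^2P^{k-1}f\}_{k\in\N^*}$, with targets $X_i=L^{p_i}(\ell^2(k^{3-2\gamma_i}))$. By Theorem~\ref{theoga}, $\|Mf\|_{X_i}=\|g_{2-\gamma_i}(\Delta^{\gamma_i}f)\|_{L^{p_i}}\approx\|\Delta^{\gamma_i}f\|_{L^{p_i}}$. It then interpolates the weighted vector-valued spaces, so the weight becomes $k^{3-2\gamma}$, and returns to $\|\Delta^\gamma f\|_{L^p}$ via the lower $g$-function bound. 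The $\gamma$-dependence thus sits in the target weights, ordinary complex interpolation of one fixed operator suffices, and only integer powers of $\Delta$ ever act on functions outside $L^p$. Note that $\Delta f=\nabla^*\nabla f\in L^{p_i}$ as soon as $\nabla f\in L^{p_i}$.

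\textbf{Your route.} Your Stein analytic-family argument replaces the square functions by bounded imaginary powers. These follow from Cowling's theorem for the symmetric Markov semigroup $e^{-t\Delta}=e^{-t}e^{tP}$, not from the analyticity of $P$. The argument is more direct, but the bookkeeping you anticipated is real:
\begin{itemize}
\item $\Delta^{\alpha}T\omega$ must be defined by the absolutely convergent series $\sum_k a_k(\alpha)P^k$ on $L^\infty$. This needs $\operatorname{Re}\alpha>0$, so discarding $\gamma_j=0$ as vacuous is the right move.
\item On the boundary lines you must identify $\Delta^{\gamma_j+i\tau}T\omega$ with $\Delta^{i\tau}(\Delta^{\gamma_j}T\omega)$ computed in $L^{p_j}$. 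One way is to write $\Delta^{\gamma_j+\epsilon+i\tau}T\omega=\Delta^{\epsilon+i\tau}\Delta^{\gamma_j}T\omega$ (a Cauchy product of absolutely convergent series). Then use $\|\Delta^{\epsilon+i\tau}\|_{L^{p_j}\to L^{p_j}}\le 2\|\Delta^{i\tau}\|_{L^{p_j}\to L^{p_j}}$, and let $\epsilon\to0$ with pointwise convergence and Fatou.
\item $\Delta^\gamma$ kills constants because $\sum_k a_k(\gamma)=0$. Factoring through $\Delta^{\gamma-1}$ is not a valid justification, since that series is not absolutely summable.
\end{itemize}

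\textbf{The density step.} One of your two fixes fails. Stein's theorem needs the endpoint bounds for every finitely supported $\omega$, because its proof replaces $\omega$ by $|\omega|^{a(z)}\operatorname{sgn}\omega$. That leaves the class $\{\nabla f: f \text{ compactly supported}\}$, so restricting to it does not work. You genuinely need (RR$_{p_i,\gamma_i}$) for $T\omega$. This is a bounded function, constant on each of the finitely many infinite components left after removing a finite subtree. When these constants differ, subtracting constants is not enough. You need cutoffs $\phi_R$ with $\|\nabla\phi_R\|_{L^{p_i}}\to0$ (a $p_i$-parabolicity property), the discrete Leibniz rule, and Fatou. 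The paper's proof uses the same kind of extension, in fact to all $f\in\dot W^{1,p_i}$, without comment. So this is not a weakness relative to the paper, but it is a step you would have to supply.
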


\bp
For $i\in \{1,2\}$, let $X_i$ be the Banach space 
\[X_i:= \left\{ \{f_k\}_{k\in \N^*}, \, \Big\| \Big( \sum_{k\in \N^*} k^{3-2\gamma_i} |f_k|^2 \Big)^\frac12\Big\|_{L^{p_i}(G)}  < + \infty\right\}\]
and then let  $M$ be the map 
\[ M(f) = \{\Delta^2 P^{k-1} f\}_{k\in \N^*}.\]

Theorem \ref{theoga}, (RR$_{p_i}$)$_{\gamma_i}$, and \eqref{LB} give that, for any $i\in \{1,2\}$,
\[\|M(f)\|_{X_i} = \|g_{2-\gamma_i} \Delta^{\gamma_i} f \|_{L^{p_i}(G)} \lesssim \|\Delta^{\gamma_i} f \|_{L^{p_i}(G)} \lesssim \||\nabla f|\|_{L^{p_i}(G)}  \lesssim \|\nabla f\|_{L^{p_i}(E_G)}\]
for  $f\in W^{1,p_i}(G)$. If $\theta \in [0,1]$, and $p$ and $\gamma$ are like in \eqref{defpgamma}, Theorem \ref{theoInterp} yields then that 
\[ \|M(f)\|_{[X_1,X_2]_\theta} \lesssim \|\nabla f\|_{L^{p}(E_G)} \lesssim  \||\nabla f|\|_{L^{p}(G)}  \qquad \text{ for } f\in W^{1,p}(G)\]
by \eqref{LB}. But since a classical result of complex interpolation (see for instance \cite[Theorems 5.6.3 and 5.1.2]{BerghL}) gives
\[[X_1,X_2]_\theta := \left\{ \{f_k\}_{k\in \N^*}, \, \Big\| \Big( \sum_{k\in \N^*} k^{3-2\gamma} |f_k|^2 \Big)^\frac12\Big\|_{L^{p_i}(G)}  < + \infty\right\},\]
we deduce, by using Theorem \ref{theoga}  again, that 
\[\|M(f)\|_{[X_1,X_2]_\theta} = \|g_{2-\gamma}(\Delta^\gamma f)\|_{L^p(G)} \gtrsim \|\Delta^\gamma f\|_{L^p(G)} \qquad \text{ for } f\in W^{1,p}(G).\]
Altogether, $\|\Delta^\gamma f\|_{p} \lesssim \|\nabla f\|_{p}$. The Corollary follows.
\ep

Next corollary is the Poincar\'e inequality on the Vicsek graphs. Analogue results for Vicsek fractals and Vicsek cable systems have been proved in \cite[Corollary 3.14]{BC23} and \cite[Lemma 2.6]{DR24} respectively.

\begin{corollary} \label{CorPqV}
Let $(G,\mu)$ be a $D$-dimensional Vicsek graph. Then, for any $q\in [1,\infty]$, the Poincar\'e inequality {\normalfont (P$_q$)$_{1 + \frac{D-1}{q}}$} holds. More precisely, there exists $C>0$ such that, for any $q\in [1,\infty]$, any $x\in G$, any $r>0$, and any function $u$ on $G$,
\begin{equation} \label{sPqV}
\left(\sum_{z\in B(x,r)} |u(z) - u(x)|^q m(z)\right)^\frac1q \leq C r^{1 + \frac{D-1}{q}}  \left(  \sum_{z,z'\in B(x,r) \atop z\sim z'} |\nabla u(z,z')|^q \mu_{zz'} \right)^\frac1q.
\end{equation}
\end{corollary}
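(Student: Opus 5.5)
The plan is to prove \eqref{sPqV} at the two endpoints $q=1$ and $q=\infty$, and then obtain all intermediate $q$ by interpolation, using the tree structure exactly as in Theorem~\ref{theoInterp}. The passage from \eqref{sPqV} to (P$_q$)$_{1+\frac{D-1}{q}}$ is the same triangle-inequality argument used at the end of the proof of Lemma~\ref{lemP1}. One writes $\|u-u_{B}\|_{L^q(B)}\le 2\|u-u(x)\|_{L^q(B)}$ by H\"older, and then uses \eqref{LB} together with the bounded degree to compare $\sum_{z\sim z'}|\nabla u(z,z')|^q\mu_{zz'}$ with $\||\nabla u|\|^q_{L^q(B)}$.

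\textbf{Endpoint $q=1$.} By Lemma~\ref{lemP1}, with $D_G=D$ thanks to \ref{AR}, we have \eqref{sPq}. This is exactly \eqref{sPqV} for $q=1$, since the exponent $1+\frac{D-1}{1}=D$.

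\textbf{Endpoint $q=\infty$.} Since $G$ is a tree, the unique geodesic from $x$ to any $z\in B(x,r)$ stays inside $B(x,r)$ and has length less than $r$. Therefore
\[
|u(z)-u(x)|\le \sum_{i}|\nabla u(x_{i-1},x_i)|\le r\sup_{z\sim z'\in B(x,r)}|\nabla u(z,z')| .
\]
This is \eqref{sPqV} with exponent $1$, as expected.

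\textbf{Interpolation.} Fix the ball $B=B(x,r)$, which is a connected subset of the tree. Consider the linear map $M:u\mapsto (u-u(x))\1_B$, viewed as a map from $\dot W^{1,q}(B)$ into $L^q(B,m)$; note that it only depends on $\nabla u$ restricted to $E_B$. The two endpoint bounds give
\[
\|Mu\|_{L^1}\le Cr^D\|u\|_{\dot W^{1,1}(B)}, \qquad \|Mu\|_{L^\infty}\le r\|u\|_{\dot W^{1,\infty}(B)}.
\]
Theorem~\ref{theoInterp}, applied with $S=B$, $X_1=L^1(B)$, $X_2=L^\infty(B)$ and $\theta=1/q$, then yields
\[
\|Mu\|_{L^q(B)}\le 2C^{1/q}r^{D/q}\,r^{1-1/q}\,\|\nabla u\|_{L^q(E_B,\mu)} .
\]
The right-hand side is $\lesssim r^{1+\frac{D-1}{q}}$ times the desired quantity, and the constant is uniform in $q$.

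The only point needing care is checking that Theorem~\ref{theoInterp} applies with $S=B$: the ball must be a connected component in the sense used there, meaning a connected vertex set on which the primitive $T\omega$ based at $x$ is well defined. On a tree this holds, since a ball is a subtree. One must also check that $M$ factors through $T$, which it does because $MT\omega(z)=\Omega(z)-\Omega(x)=\Omega(z)$. I expect this verification, together with the uniform control of the constants, to be the main (mild) obstacle. Everything else reduces to Lemma~\ref{lemP1} and the path bound on the tree.
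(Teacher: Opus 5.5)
Your proposal is correct and follows the paper's proof essentially step for step. The $L^1$ endpoint comes from Lemma~\ref{lemP1} with $D_G=D$, and the $L^\infty$ endpoint from the path bound inside $B(x,r)$. Theorem~\ref{theoInterp} is then applied on $S=B(x,r)$ with $M:u\mapsto u-u(x)$ and $\theta=1/q$, and the same triangle-inequality step gives (P$_q$)$_{1+\frac{D-1}{q}}$. Your explicit checks are the points the paper uses only implicitly: a ball only needs to be a connected vertex set, and $M$ ignores the additive constant lost in $T\nabla u$. Your H\"older step also gives the right factor $V(x,r)^{1/q}$ in the last display, where the paper writes $V(x,r)$.
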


\bp
Let $x\in G$ and $r>0$. Define the map $M$ for functions on $B(x,r)$ as
\[ M(f) = f - f(x).\]
With the notation of Theorem \ref{theoInterp}, Lemma \ref{lemP1} gives that 
\[ \|M(f)\|_{L^1(B(x,r))} \lesssim r^{D}  \|f\|_{\dot W^{1,1}(B(x,r))}.\]
Moreover, since any point of $B(x,r)$ is connected to $x$ by a path staying in $B(x,r)$ of length at most $r$ - this comes directly from the definition of the metric $\delta$ and the balls - we have
\[ \|M(f)\|_{L^\infty(B(x,r))} \leq r \sup_{z,z'\in B(x,r) \atop z\sim z'} |f(z) - f(z')| = r \|f\|_{\dot W^{1,\infty}(B(x,r))}.\]
Theorem \ref{theoInterp} gives that, for any $q\in [1,\infty]$, 
\[ \|M(f)\|_{L^q(B(x,r))} \lesssim r^{1 + \frac{D-1}{q}} \|f\|_{\dot W^{1,q}(B(x,r))},\]
which is exactly \eqref{sPqV}.

  To prove the theorem, it suffices to observe that (P$_q$)$_{1 + \frac{D-1}{q}}$ is a consequence of \eqref{sPqV}. Indeed,
 \begin{multline*}
 \|f-f_{B(x,r)}\|_{L^q(B(x,r))} \leq \|f-f(x)\|_{L^q(B(x,r))} + V(x,r) |f(x) - f_{B(x,r)}| \\
 \leq 2 \|f-f(x)\|_{L^q(B(x,r))} \lesssim r^{1 + \frac{D-1}{q}} \|\nabla f\|_{L^q(E_{B(x,r)})} \lesssim r^{1 + \frac{D-1}{q}} \||\nabla f|\|_{L^q(B(x,r))}
 \end{multline*}
 by \eqref{sPqV} and then \eqref{LB}. The corollary follows.
\ep

\medskip

We conclude the article with the proof of Theorem \ref{ThVicsek}.

\medskip

\noindent {\em Proof of Theorem \ref{ThVicsek}.} We need to prove the following 4 facts:
\begin{enumerate}[(a)]
\item \eqref{RRpggbis} holds when  $p\in (1,\infty$) and $\gamma^*(p) < \gamma < 1$;
\item \eqref{RRpggbis} fails when  $p\in (1,\infty$) and $0 < \gamma < \gamma^*(p) $;
\item \eqref{Rpggbis} fails when  $p\in (1,\infty$) and $\gamma^*(p) < \gamma < 1$;
\item \eqref{Rpggbis} holds when  $p\in (1,\infty$) and $0 < \gamma < \gamma^*(p) $;
\end{enumerate}
Let us show the 4 assertions, in order.

\medskip

\noindent {\bf Assertion (a).} By Corollary \ref{CorPqV}, for any $q\in [1,\infty]$, the Poincar\'e inequality (P$_q$)$_{1+\frac{D-1}{q}}$ holds. Together with Theorem \ref{ThMain}, it means that  \eqref{RRpggbis} whenever there exists $q \in [1,\infty]$ satisfying $q<p$ and 
\[  \frac{1+\frac{D-1}{q}}{D+1} < \gamma < 1,\]
i.e. whenever 
\[\gamma > \frac{1+\frac{D-1}{p}}{D+1} = \frac1{D+1} + \frac1p \Big( \frac{D-1}{D+1} \Big) = \gamma^*(p).\]

\medskip

\noindent {\bf Assertion (b).} Assume by contradiction that there exists $p_0\in (1,\infty)$ and $0 < \gamma_0 < \gamma^*(p_0)$ such that (RR$_{p_0,\gamma_0}$) holds. Assertion (a) gives us also that  \eqref{RRpggbis} holds whenever $p\in (1,\infty$) and $\gamma^*(p) < \gamma < 1$. By interpoling those two situations - using Corollary \ref{CorRRpInterp} - we deduce that (RR$_{2,\gamma}$) holds for a $\gamma < 2$. However, by \cite[Lemma 4.4]{Fen26}, the later is impossible, hence the contradiction.

\medskip

\noindent {\bf Assertion (c).} This comes directly from Assertion (b) and the duality given in Lemma \ref{lemDuality}.

\medskip

\noindent {\bf Assertion (d).} 
First, observe that $\||\nabla f|\|_{L^p(G)} \approx \|\nabla f\|_{L^p(E_G)}$. Fix $x\in G$ and define the set $E_G$ and the map $T$ as in the proof of Theorem \ref{theoInterp}. We have $ \|\nabla f\|_{L^p(E_G)}^p =  2 \|\nabla f\|_{L^p(\bar E_G)}^p$, and thus 
\[\||\nabla f|\|_{L^p(G)} \lesssim \sup_{\omega \in L^{p'}(\bar E_G) \atop \|\omega\|_{p'} \leq 1} \sum_{e\in \bar E_G} \nabla f(e) \, \omega(e) \, \mu(e) =  \sup_{\omega \in L^{p'}(\bar E_G) \atop \|\omega\|_{p'} \leq 1} \sum_{e\in \bar E_G} \nabla f(e) \, \nabla T\omega(e) \, \mu(e)
\]
We use the fact that $E_G = \{(y,z) \in G^2, \, (y,z) \text{ or } (z,y) \in \bar E_G\}$, and that $\nabla u(y,z) = - \nabla u(z,y)$ for $(y,z)\in E_G$ to say that 
\begin{multline} \label{trucA}
\||\nabla f|\|_{L^p(G)} \lesssim \sup_{\omega \in L^{p'}(\bar E_G) \atop \|\omega\|_{p'} \leq 1} \sum_{e\in E_G} \nabla f(e) \, \nabla T \omega(e) \, \mu(e) \\ = \sup_{\omega \in L^{p'}(\bar E_G) \atop \|\omega\|_{p'} \leq 1} \sum_{z\in G} \Delta f(z) \, T \omega(z) \, m(z)  = \sup_{\omega \in L^{p'}(\bar E_G) \atop \|\omega\|_{p'} \leq 1} \sum_{z\in G} \Delta^\gamma f(z) \, [\Delta^{1-\gamma} T \omega](z) \, m(z) \\
\leq  \|\Delta^\gamma f\|_{L^p(G)} \sup_{\omega \in L^{p'}(\bar E_G) \atop \|\omega\|_{p'} \leq 1} \, \|\Delta^{1-\gamma} T \omega\|_{L^{p'}(G)}
\end{multline}
where the first equality of the second line is due to \eqref{dd*duals}. But with our assumptions on $\gamma$ and $p$, we have $\gamma^*(p') < 1-\gamma < 1$, meaning that we can invoke Assertion (a) to obtain
\[ \|\Delta^{1-\gamma} T \omega\|_{L^{p'}(G)} \leq \||\nabla  T \omega| \|_{L^{p'}(G)} \approx \|\nabla  T \omega \|_{L^{p'}(\bar E_G)}\]
by \eqref{LB}. But since $\nabla T$ is the identity on $E_G$, we deduce that 
\[\|\Delta^{1-\gamma} T \omega\|_{L^{p'}(G)} \lesssim \|\omega\|_{L^{p'}(\bar E_G)} \]
and then, by reinjecting this last bound in \eqref{trucA}, 
\[\||\nabla f|\|_{L^p(G)} \lesssim  \|\Delta^\gamma f\|_{L^p(G)}.\]
Assertion (d) and then the Theorem follows.
\ep

\bibliographystyle{amsalpha}

\end{document}